\renewcommand{\leq}{\ensuremath{\leqslant}}
\renewcommand{\geq}{\ensuremath{\geqslant}}
\renewcommand{\le}{\ensuremath{\leqslant}}
\newcommand{\minimize}[2]{\ensuremath{\underset{\substack{{#1}}}%
{\text{\rm minimize}}\;\;#2 }}
\newcommand{\EC}[2]{{\mathsf E}(#1\! \mid\! #2)} 
\newcommand{\EEC}[2]{{\mathsf E}\bigg(#1\! \:\Big|\: #2\bigg)}
\newcommand{\scal}[2]{{\left\langle{{#1}\mid{#2}}\right\rangle}}
\newcommand{\menge}[2]{\big\{{#1}~\big |~{#2}\big\}} 
\newcommand{\Menge}[2]{\left\{{#1}~\Big|~{#2}\right\}} 
\newcommand{\HHH}{{\ensuremath{\boldsymbol{\mathsf H}}}}
\newcommand{\GGG}{{\ensuremath{\boldsymbol{\mathsf G}}}}
\newcommand{\KKK}{{\ensuremath{\boldsymbol{\mathsf K}}}}
\newcommand{\HH}{\ensuremath{{\mathsf H}}}
\newcommand{\GG}{\ensuremath{{\mathsf G}}}
\newcommand{\FF}{\ensuremath{{\EuScript F}}}
\newcommand{\XX}{\ensuremath{\EuScript{X}}}
\newcommand{\YY}{\ensuremath{\EuScript{Y}}}
\newcommand{\WC}{\ensuremath{{\mathfrak W}}}
\newcommand{\SC}{\ensuremath{{\mathfrak S}}}
\newcommand{\XXX}{\ensuremath{\boldsymbol{\EuScript{X}}}}
\newcommand{\EEE}{\ensuremath{\boldsymbol{\EuScript{E}}}}
\newcommand{\Sum}{\ensuremath{\displaystyle\sum}}
\newcommand{\emp}{\ensuremath{{\varnothing}}}
\newcommand{\Id}{\ensuremath{\text{\rm Id}}\,}
\newcommand{\ID}{\ensuremath{\text{\bfseries Id}}\,}
\newcommand{\cart}{\ensuremath{\raisebox{-0.5mm}{\mbox{\LARGE{$\times$}}}}}
\newcommand{\RR}{\ensuremath{\mathbb{R}}}
\newcommand{\RP}{\ensuremath{\left[0,+\infty\right[}}
\newcommand{\RPP}{\ensuremath{\left]0,+\infty\right[}}
\newcommand{\RPX}{\ensuremath{\left[0,+\infty\right]}}
\newcommand{\RX}{\ensuremath{\left]-\infty,+\infty\right]}}
\newcommand{\PP}{\ensuremath{\mathsf P}}
\newcommand{\as}{\ensuremath{\text{\rm $\PP$-a.s.}}}
\newcommand{\NN}{\ensuremath{\mathbb N}}
\newcommand{\weakly}{\ensuremath{\:\rightharpoonup\:}}
\newcommand{\exi}{\ensuremath{\exists\,}}
\newcommand{\zer}{\ensuremath{\text{\rm zer}\,}}
\newcommand{\pinf}{\ensuremath{{+\infty}}}
\newcommand{\dom}{\ensuremath{\text{\rm dom}\,}}
\newcommand{\prox}{\ensuremath{\text{\rm prox}}}
\newcommand{\Fix}{\ensuremath{\text{\rm Fix}\,}}
\newcommand{\gra}{\ensuremath{\text{\rm gra}\,}}
\newcommand{\zeroun}{\ensuremath{\left]0,1\right[}}   
\newcommand{\rzeroun}{\ensuremath{\left]0,1\right]}}   
\newtheorem{theorem}{Theorem}[section]
\newtheorem{lemma}[theorem]{Lemma}
\newtheorem{corollary}[theorem]{Corollary}
\newtheorem{proposition}[theorem]{Proposition}
\theoremstyle{plain}{\theorembodyfont{\rmfamily}%
\newtheorem{notation}[theorem]{Notation}}
\theoremstyle{plain}{\theorembodyfont{\rmfamily}%
}
\theoremstyle{plain}{\theorembodyfont{\rmfamily}%
}
\theoremstyle{plain}{\theorembodyfont{\rmfamily}%
}
\theoremstyle{plain}{\theorembodyfont{\rmfamily}%
}
\theoremstyle{plain}{\theorembodyfont{\rmfamily}%
\newtheorem{remark}[theorem]{Remark}}
\theoremstyle{plain}{\theorembodyfont{\rmfamily}%
\newtheorem{definition}[theorem]{Definition}}
\theoremstyle{plain}{\theorembodyfont{\rmfamily}%
}
\numberwithin{equation}{section}
\begin{document}
\title{\sffamily Stochastic Quasi-Fej\'er Block-Coordinate Fixed
Point Iterations\\ with Random Sweeping\thanks{Contact author: 
P. L. Combettes, {\ttfamily{plc@ljll.math.upmc.fr}}, phone: 
+33 1 4427 6319, fax: +33 1 4427 7200. 
This work was supported by the 
CNRS MASTODONS project under grant 2013MesureHD.}}
\author{Patrick L. Combettes$^{1}$ and 
Jean-Christophe Pesquet$^2$
\\[5mm]
\small
\small $\!^1$Sorbonne Universit\'es -- UPMC Univ. Paris 06\\
\small UMR 7598, Laboratoire Jacques-Louis Lions\\
\small F-75005 Paris, France\\
\small \ttfamily{plc@ljll.math.upmc.fr}
\\[5mm]
\small
\small $\!^2$Universit\'e Paris-Est\\
\small Laboratoire d'Informatique Gaspard Monge -- CNRS UMR 8049\\
\small F-77454, Marne la Vall\'ee Cedex 2, France\\
\small \ttfamily{jean-christophe.pesquet@univ-paris-est.fr}
}
\date{~}
\maketitle
\thispagestyle{empty}
\setcounter{page}{0}

\vskip 8mm

\begin{abstract}
This work proposes block-coordinate fixed point algorithms with
applications to nonlinear analysis and optimization in Hilbert 
spaces. The asymptotic analysis relies on a notion of 
stochastic quasi-Fej\'er monotonicity, which is thoroughly
investigated. The iterative methods under consideration feature 
random sweeping rules to select arbitrarily the blocks of 
variables that are 
activated over the course of the iterations and they allow for 
stochastic errors in the evaluation of the operators. Algorithms 
using quasinonexpansive operators or compositions of averaged 
nonexpansive operators are constructed, and weak and strong 
convergence results are established for the sequences they 
generate. As a by-product, novel block-coordinate operator 
splitting methods are obtained for solving structured monotone 
inclusion and convex minimization problems. 
In particular, the proposed framework leads to random 
block-coordinate versions of the Douglas-Rachford and 
forward-backward algorithms and of some of their variants.
In the standard case of $m=1$ block, our results remain new as they
incorporate stochastic perturbations.
\end{abstract}

{\bfseries Keywords.}
Arbitrary sampling,
block-coordinate algorithm,
fixed-point algorithm,
monotone operator splitting,
primal-dual algorithm,
stochastic quasi-Fej\'er sequence,
stochastic algorithm,
structured convex minimization problem.

\newpage

\section{Introduction}
The main advantage of block-coordinate algorithms is to result in  
implementations with reduced complexity and memory requirements 
per iteration. These benefits have long been recognized
\cite{Ausl71,Ceaj71,Orte70} and have become increasingly 
important in very large-scale problems. In addition, 
block-coordinate strategies may lead to faster \cite{Chou14} or 
distributed \cite{Iutz13} implementations. In this paper, we 
propose a block-coordinate fixed point algorithmic framework to 
solve a variety of problems in Hilbertian nonlinear numerical 
analysis and optimization. Algorithmic fixed point theory in 
Hilbert spaces provides a unifying and powerful framework for the 
analysis and the construction of a wide array of solution methods 
in such problems \cite{Baus96,Livre1,Cegi12,Opti04,Zeid86}. 
Although several block-coordinate algorithms exist for solving
specific optimization problems in Euclidean spaces, a framework 
for dealing with general fixed point methods in Hilbert spaces 
and which guarantees the convergence of the iterates does not seem 
to exist at present. In the proposed constructs, a random sweeping 
strategy is employed for selecting the blocks of coordinates which 
are activated over the iterations. The sweeping rule allows for
an arbitrary sampling of the indices of the coordinates.
Furthermore, the algorithms tolerate stochastic errors in the 
implementation of the operators. 
This paper provides the first general stochastic block-coordinate 
fixed point framework with guaranteed convergence of the iterates. 
It generates a wide range of new algorithms, which will be
illustrated by numerical experiments elsewhere.

A main ingredient for proving the convergence of many fixed point
algorithms is the fundamental concept of (quasi-)Fej\'er monotonicity
\cite{Else01,Ency09,Erem09,Raik67}. In Section~\ref{sec:2}, 
refining the seminal work of \cite{Ermo69,Ermo71,Ermo68}, we 
revisit this concept from a stochastic standpoint. By 
exploiting properties of almost super-martingales \cite{Robb71}, we 
establish novel almost sure convergence results for an 
abstract stochastic iteration scheme. In Section~\ref{sec:3}, 
this scheme is applied to the design of block-coordinate algorithms 
for relaxed iterations of quasinonexpansive operators. A simple
instance of such iterations is the Krasnosel'ski\u\i--Mann 
method, which has found numerous applications \cite{Livre1,Byrn14}. 
In Section~\ref{sec:4}, we design block-coordinate algorithms 
involving compositions of averaged nonexpansive operators.
The results are used in Section~\ref{sec:5} to construct
block-coordinate algorithms for structured monotone inclusion and 
convex minimization problems. Splitting algorithms have recently 
becomes tools of choice in signal processing and machine learning; 
see, e.g., 
\cite{Byrn14,Banf11,Smms05,Devi11,Ragu13,Wrig11}. Providing
versatile block-coordinate versions of these algorithms is expected 
to benefit these emerging areas, as well as more traditional 
fields of applications of splitting methods, e.g., \cite{Glow89}.
One of the offsprings of our work is an original
block-coordinate primal-dual algorithm which can be employed to
solve a large class of variational problems.

\section{Stochastic quasi-Fej\'er monotonicity}
\label{sec:2}

Fej\'er monotonicity has been exploited in various 
areas of nonlinear analysis and optimization to unify the 
convergence proofs of deterministic algorithms; see, e.g., 
\cite{Livre1,Ency09,Erem09,Raik67}. In the late 1960s, this 
notion was revisited in a stochastic setting in Euclidean spaces 
\cite{Ermo69,Ermo71,Ermo68}. In this section, we investigate a
notion of stochastic quasi-Fej\'er monotone sequence in Hilbert 
spaces and apply the results to a general stochastic iterative 
method. Throughout the paper, the following notation will be used.  

\begin{notation}
\label{n:2013}
$\HH$ is a separable real Hilbert space with 
scalar product $\scal{\cdot}{\cdot}$, associated norm 
$\|\cdot\|$, and Borel $\sigma$-algebra $\mathcal{B}$. 
$\Id$ denotes the identity operator on $\HH$ and
$\weakly$ and $\to$ denote, respectively, weak and strong 
convergence in $\HH$. The sets of strong and weak sequential 
cluster points of a sequence $(\mathsf{x}_n)_{n\in\NN}$ in $\HH$ 
are denoted by $\SC(\mathsf{x}_n)_{n\in\NN}$ and 
$\WC(\mathsf{x}_n)_{n\in\NN}$, respectively. 
The underlying probability space is 
$(\Omega,\FF,\PP)$. A $\HH$-valued random variable is a measurable 
map $x\colon(\Omega,\FF)\to(\HH,\mathcal{B})$. The 
$\sigma$-algebra generated by a family $\Phi$ of 
random variables is denoted by $\sigma(\Phi)$.
Let $\mathscr{F}=(\FF_n)_{n\in\NN}$ be a sequence of 
sub-sigma algebras of $\FF$ such that $(\forall n\in\NN)$ 
$\FF_n\subset\FF_{n+1}$.
We denote by $\ell_+(\mathscr{F})$ the set of sequences 
of $\RP$-valued random variables $(\xi_n)_{n\in\NN}$ such that,
for every $n\in\NN$, $\xi_n$ is $\FF_n$-measurable. We set
\begin{equation}
\label{eHj7+1-13}
(\forall p\in\RPP)\quad\ell_+^p(\mathscr{F})=
\Menge{(\xi_n)_{n\in\NN}\in\ell_+(\mathscr{F})}
{\sum_{n\in\NN}\xi_n^p<\pinf\;\as}
\end{equation}
and
\begin{equation}
\label{eHj7+1-12}
\ell_+^\infty({\mathscr{F}})=
\Menge{(\xi_n)_{n\in\NN}\in\ell_+(\mathscr{F})}
{\sup_{n\in\NN}\xi_n<\pinf\; \as}.
\end{equation}
Given a sequence $(x_n)_{n\in\NN}$ of $\HH$-valued 
random variables, we define
\begin{equation}
\label{eHj7+1-14}
{\mathscr{X}}=(\XX_n)_{n\in\NN},
\quad\text{where}\quad (\forall n\in\NN)\quad
\XX_n=\sigma(x_0,\ldots,x_n). 
\end{equation}
Equalities and inequalities involving random variables will always
be understood to hold $\PP$-almost surely, even if the expression 
``$\as$'' is not explicitly written.
For background on probability in Hilbert spaces, see 
\cite{Fort95,Ledo91}.
\end{notation}

\begin{lemma}{\rm \cite[Theorem~1]{Robb71}}
\label{l:1971}
Let $\mathscr{F}=(\FF_n)_{n\in\NN}$ be a sequence of sub-sigma 
algebras of $\FF$ such that 
$(\forall n\in\NN)$ $\FF_n\subset\FF_{n+1}$. Let
$(\alpha_n)_{n\in\NN}\in\ell_+({\mathscr{F}})$, 
$(\vartheta_n)_{n\in\NN}\in\ell_+({\mathscr{F}})$, 
$(\eta_n)_{n\in\NN}\in\ell_+^1({\mathscr{F}})$, and 
$(\chi_n)_{n\in\NN}\in\ell_+^1({\mathscr{F}})$ be such 
that
\begin{equation}
\label{eHj7+0-26}
(\forall n\in\NN)\quad
\EC{\alpha_{n+1}}{\FF_n}+\vartheta_n\leq(1+\chi_n)\alpha_n
+\eta_n\quad\as
\end{equation}
Then $(\vartheta_n)_{n\in\NN}\in\ell_+^1({\mathscr{F}})$ and
$(\alpha_n)_{n\in\NN}$ converges $\as$ to a $\RP$-valued random
variable.
\end{lemma}

\begin{proposition}
\label{p:1}
Let $\mathsf{F}$ be a nonempty closed subset of $\HH$, let 
$\phi\colon\RP\to\RP$ be a strictly increasing 
function such that $\lim_{t\to\pinf}\phi(t)=\pinf$, and let 
$(x_n)_{n\in\NN}$ be a sequence of $\HH$-valued random variables. 
Suppose that, for every $\mathsf{z}\in\mathsf{F}$, there exist
$(\chi_n(\mathsf{z}))_{n\in\NN}\in\ell_+^1({\mathscr{X}})$,
$(\vartheta_n(\mathsf{z}))_{n\in\NN}\in\ell_+({\mathscr{X}})$,
and $(\eta_n(\mathsf{z}))_{n\in\NN}\in\ell_+^1({\mathscr{X}})$
such that the following is satisfied $\as$:
\begin{equation}
\label{e:sqf1}
(\forall n\in\NN)\quad
\EC{\phi(\|x_{n+1}-\mathsf{z}\|)}{\XX_n}+\vartheta_n(\mathsf{z}) 
\leq (1+\chi_n(\mathsf{z}))\phi(\|x_n-\mathsf{z}\|)+
\eta_n(\mathsf{z}).
\end{equation}
Then the following hold:
\begin{enumerate}
\item
\label{p:1i}
$(\forall\mathsf{z}\in\mathsf{F})$ 
$\big[\:\sum_{n\in\NN}\vartheta_n(\mathsf{z})<\pinf\:\as\big]$
\item
\label{p:1ii}
$(x_n)_{n\in\NN}$ is bounded $\as$
\item
\label{p:1iibis}
There exists $\widetilde{\Omega}\in\FF$ such that 
$\PP(\widetilde{\Omega})=1$ and, for every 
$\omega\in\widetilde{\Omega}$ and every $\mathsf{z}\in\mathsf{F}$,
$(\|x_n(\omega)-\mathsf{z}\|)_{n\in\NN}$ converges.
\item
\label{p:1iii}
Suppose that $\WC(x_n)_{n\in\NN}\subset\mathsf{F}\;\:\as$ Then 
$(x_n)_{n\in\NN}$ converges weakly $\as$ to an $\mathsf{F}$-valued 
random variable.
\item
\label{p:1iv}
Suppose that $\SC(x_n)_{n\in\NN}\cap\mathsf{F}\neq\emp\;\:\as$
Then $(x_n)_{n\in\NN}$ converges strongly $\as$ to an 
$\mathsf{F}$-valued random variable.
\item
\label{p:1v}
Suppose that $\SC(x_n)_{n\in\NN}\neq\emp\;\:\as$ and that 
$\WC(x_n)_{n\in\NN}\subset\mathsf{F}\;\:\as$ Then 
$(x_n)_{n\in\NN}$ converges strongly $\as$ to an $\mathsf{F}$-valued 
random variable.
\end{enumerate}
\end{proposition}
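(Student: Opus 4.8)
The plan is to read~\eqref{e:sqf1} as an instance of the almost supermartingale recursion~\eqref{e:2013-10-26} and invoke Lemma~\ref{l:1971}, then to promote the resulting pointwise-in-$\mathsf z$ statements to a single almost sure event valid simultaneously for every $\mathsf z\in\mathsf F$, and finally to run the Opial and cluster-point arguments pathwise on that event.

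First I would fix $\mathsf z\in\mathsf F$ and set $\alpha_n=\phi(\|x_n-\mathsf z\|)$. Since $x_n$ is $\XX_n$-measurable and $\phi$ is Borel (being monotone), $(\alpha_n)_{n\in\NN}\in\ell_+(\mathscr X)$, and with the given sequences $(\vartheta_n(\mathsf z))_{n\in\NN}$, $(\eta_n(\mathsf z))_{n\in\NN}$, and $(\chi_n(\mathsf z))_{n\in\NN}$ the hypothesis~\eqref{e:sqf1} is exactly~\eqref{e:2013-10-26} for the filtration $\mathscr X$. Lemma~\ref{l:1971} then delivers $(\vartheta_n(\mathsf z))_{n\in\NN}\in\ell_+^1(\mathscr X)$, which is~\ref{p:1i}, together with the almost sure convergence of $(\phi(\|x_n-\mathsf z\|))_{n\in\NN}$ to an $\RP$-valued random variable. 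For~\ref{p:1ii} it suffices to fix a single $\mathsf z$: the convergent sequence $(\phi(\|x_n-\mathsf z\|))_{n\in\NN}$ is bounded $\as$, and since $\lim_{t\to\pinf}\phi(t)=\pinf$ this forces $\sup_n\|x_n-\mathsf z\|<\pinf\;\as$

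The heart of the matter is~\ref{p:1iibis}, and the main obstacle is the uniformity in $\mathsf z$. For fixed $\mathsf z$ the convergence of $(\phi(\|x_n-\mathsf z\|))_{n\in\NN}$ upgrades to convergence of $(\|x_n-\mathsf z\|)_{n\in\NN}$: this sequence is bounded, and if its $\liminf$ were strictly below its $\limsup$ one could interpolate reals $p<q$ between them and, by strict monotonicity of $\phi$, extract subsequences of $(\phi(\|x_n-\mathsf z\|))_{n\in\NN}$ lying below $\phi(p)$ and above $\phi(q)$, contradicting convergence; note this uses strict monotonicity but \emph{not} continuity of $\phi$. The difficulty is that the exceptional null set depends on $\mathsf z$ while $\mathsf F$ is typically uncountable, so the events cannot simply be intersected. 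Here separability of $\HH$ is decisive: $\mathsf F$ is separable, so I would fix a countable dense set $\mathsf D\subset\mathsf F$ and take $\widetilde\Omega$ to be the intersection of the boundedness event with the countably many full-measure events on which $(\|x_n-\mathsf z\|)_{n\in\NN}$ converges, $\mathsf z\in\mathsf D$; then $\PP(\widetilde\Omega)=1$. For $\omega\in\widetilde\Omega$ and an arbitrary $\mathsf z\in\mathsf F$, pick $\mathsf z_k\in\mathsf D$ with $\mathsf z_k\to\mathsf z$; the reverse triangle inequality bounds the oscillation of $(\|x_n(\omega)-\mathsf z\|)_{n\in\NN}$ by $2\|\mathsf z-\mathsf z_k\|$, which tends to $0$, so $(\|x_n(\omega)-\mathsf z\|)_{n\in\NN}$ converges.

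It then remains to argue pathwise on $\widetilde\Omega$ intersected with the relevant hypothesis event. For~\ref{p:1iii}, boundedness supplies a weak cluster point, the hypothesis places every weak cluster point in $\mathsf F$, and~\ref{p:1iibis} makes $(\|x_n-\mathsf z\|)_{n\in\NN}$ convergent for each such $\mathsf z$; the identity $\|x_n-u\|^2-\|x_n-v\|^2=2\scal{x_n}{v-u}+\|u\|^2-\|v\|^2$ shows that two weak cluster points $u,v\in\mathsf F$ must coincide, so the weak cluster point is unique and $(x_n)_{n\in\NN}$ converges weakly. For~\ref{p:1iv}, a strong cluster point $\mathsf z\in\mathsf F$ satisfies $\|x_{n_k}-\mathsf z\|\to0$ along a subsequence, and since by~\ref{p:1iibis} the full sequence $(\|x_n-\mathsf z\|)_{n\in\NN}$ converges, its limit is $0$ and $x_n\to\mathsf z$ strongly. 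Finally~\ref{p:1v} reduces to~\ref{p:1iv}: strong cluster points are weak cluster points, so $\SC(x_n)_{n\in\NN}\subset\WC(x_n)_{n\in\NN}\subset\mathsf F\;\as$, whence $\SC(x_n)_{n\in\NN}\cap\mathsf F=\SC(x_n)_{n\in\NN}\neq\emp\;\as$ In each case the limit is an $\mathsf F$-valued random variable: it lies in $\mathsf F$ because it is a cluster point falling in $\mathsf F$, and it is measurable as an almost sure strong limit of the $x_n$, or, in the weakly convergent case, because $\scal{x}{h}=\lim_n\scal{x_n}{h}$ is measurable for every $h\in\HH$ and weak measurability entails Borel measurability in the separable space $\HH$.
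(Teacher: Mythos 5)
Your proposal is correct and follows essentially the same route as the paper's proof: Robbins--Siegmund applied to $\phi(\|x_n-\mathsf z\|)$, the strict-monotonicity argument to upgrade convergence of $(\phi(\|x_n-\mathsf z\|))_{n\in\NN}$ to that of $(\|x_n-\mathsf z\|)_{n\in\NN}$, a countable dense subset of $\mathsf F$ plus the reverse triangle inequality to obtain a single full-measure event, and then the Opial-type and cluster-point arguments pathwise, with measurability of the limit handled via Pettis in the weak case. No gaps.
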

\begin{proof}
\ref{p:1i}:
Fix $\mathsf{z}\in\mathsf{F}$. It follows from \eqref{e:sqf1} and 
Lemma~\ref{l:1971} that
$\sum_{n\in\NN}\vartheta_n(\mathsf{z})<\pinf\:\as$

\ref{p:1ii}:
Let $\mathsf{z}\in\mathsf{F}$ and
set $(\forall n\in\NN)$ $\xi_n=\|x_n-\mathsf{z}\|$.
We derive from \eqref{e:sqf1} and Lemma~\ref{l:1971} that 
$(\phi(\xi_n))_{n\in\NN}$ converges $\as$, say 
$\phi(\xi_n)\to\alpha\:\as$, where $\alpha$ is a $\RP$-valued
random variable. In turn, since 
$\lim_{t\to\pinf}\phi(t)=\pinf$, $(\xi_n)_{n\in\NN}$ is bounded 
$\as$ and so is $(x_n)_{n\in\NN}$. For subsequent use, let us also
note that 
\begin{equation}
\label{egb18T4-25a}
(\|x_n-\mathsf{z}\|)_{n\in\NN}\;
\text{converges to a $\RP$-valued random variable $\as$}
\end{equation}
Indeed, take $\omega\in\Omega$ such that 
$(\xi_n(\omega))_{n\in\NN}$ is bounded. Suppose that there
exist $\tau(\omega)\in\RP$, $\zeta(\omega)\in\RP$, and 
subsequences $(\xi_{k_n}(\omega))_{n\in\NN}$ and
$(\xi_{l_n}(\omega))_{n\in\NN}$ such that 
$\xi_{k_n}(\omega)\to\tau(\omega)$ and
$\xi_{l_n}(\omega)\to\zeta(\omega)>\tau(\omega)$, and let 
$\delta(\omega)\in
\left]0,(\zeta(\omega)-\tau(\omega))/2\right[$. Then, for $n$
sufficiently large, 
$\xi_{k_n}(\omega)\leq\tau(\omega)+\delta(\omega)<
\zeta(\omega)-\delta(\omega)\leq\xi_{l_n}(\omega)$  and, 
since $\phi$ is strictly increasing, 
$\phi(\xi_{k_n}(\omega))\leq\phi(\tau(\omega)+\delta(\omega))<
\phi(\zeta(\omega)-\delta(\omega))\leq\phi(\xi_{l_n}(\omega))$.
Taking the limit as $n\to\pinf$ yields 
$\alpha(\omega)\leq\phi(\tau(\omega)+\delta(\omega))<
\phi(\zeta(\omega)-\delta(\omega))\leq\alpha(\omega)$, 
which is impossible. It follows that $\tau(\omega)=\zeta(\omega)$
and, in turn, that $\xi_n(\omega)\to\tau(\omega)$.
Thus, $\xi_n\to\tau\;\as$

\ref{p:1iibis}: 
Since $\HH$ is separable, there exists a countable set $\mathsf{Z}$
such that $\overline{\mathsf{Z}}=\mathsf{F}$. According to 
\eqref{egb18T4-25a}, for every $\mathsf{z}\in\mathsf{F}$,
there exists a set $\Omega_{\mathsf{z}}\in\FF$ such that 
$\PP(\Omega_{\mathsf{z}})=1$ and, for every
$\omega\in\Omega_{\mathsf{z}}$, the sequence
$(\|x_{n}(\omega)-\mathsf{z}\|)_{n\in\NN}$ converges.
Now set $\widetilde{\Omega}=\bigcap_{\mathsf{z}\in\mathsf{Z}}
\Omega_{\mathsf{z}}$ and let $\complement\widetilde{\Omega}$ be
its complement. Then, since $\mathsf{Z}$ is countable,
$\PP(\widetilde{\Omega})=1-\PP(\complement\widetilde{\Omega})=
1-\PP(\bigcup_{\mathsf{z}\in\mathsf{Z}}\complement
\Omega_{\mathsf{z}})\geq 1-\sum_{\mathsf{z}\in\mathsf{Z}}
\PP(\complement\Omega_{\mathsf{z}})=1$, hence 
$\PP(\widetilde{\Omega})=1$. We now fix $\mathsf{z}\in\mathsf{F}$. 
Since $\overline{\mathsf{Z}}=\mathsf{F}$, there exists a sequence 
$(\mathsf{z}_k)_{k\in\NN}$ in $\mathsf{Z}$ such that 
$\mathsf{z}_{k}\to\mathsf{z}$. As just seen, \eqref{egb18T4-25a} 
yields 
\begin{equation}
\label{egb18T4-16u}
(\forall k\in\NN)(\exi\tau_k\colon\Omega\to\RP)
(\forall\omega\in\Omega_{\mathsf{z}_{k}})\quad
\|x_{n}(\omega)-\mathsf{z}_{k}\|\to\tau_k(\omega).
\end{equation}
Now let $\omega\in\widetilde{\Omega}$. We have
\begin{equation}
\label{egb18T4-18a}
(\forall k\in\NN)(\forall n\in\NN)\quad
-\|\mathsf{z}_k-\mathsf{z}\|\leq\|x_n(\omega)-\mathsf{z}\|-
\|x_n(\omega)-\mathsf{z}_k\|\leq\|\mathsf{z}_k-\mathsf{z}\|.
\end{equation}
Therefore
\begin{align}
\label{egb18T4-18b}
(\forall k\in\NN)\quad
-\|\mathsf{z}_k-\mathsf{z}\|
&\leq\varliminf_{n\to\pinf}\|x_n(\omega)-\mathsf{z}\|-
\lim_{n\to\pinf}\|x_n(\omega)-\mathsf{z}_k\|\nonumber\\
&=\varliminf_{n\to\pinf}\|x_n(\omega)-\mathsf{z}\|-
\tau_k(\omega)\nonumber\\
&\leq\varlimsup_{n\to\pinf}\|x_n(\omega)-\mathsf{z}\|-
\tau_k(\omega)\nonumber\\
&=\varlimsup_{n\to\pinf}\|x_n(\omega)-\mathsf{z}\|-
\lim_{n\to\pinf}\|x_n(\omega)-\mathsf{z}_k\|\nonumber\\
&\leq\|\mathsf{z}_k-\mathsf{z}\|.
\end{align}
Hence, taking the limit as $k\to\pinf$ in \eqref{egb18T4-18b},
we obtain that $(\|x_n(\omega)-\mathsf{z}\|)_{n\in\NN}$ converges;
more precisely, 
$\lim_{n\to\pinf}\|x_n(\omega)-\mathsf{z}\|=\lim_{k\to\pinf}
\tau_k(\omega)$. 

\ref{p:1iii}: 
By assumption, there exists $\widehat{\Omega}\in\FF$ such that
$\PP(\widehat{\Omega})=1$ and $(\forall\omega\in\widehat{\Omega})$ 
$\WC(x_n(\omega))_{n\in\NN}\subset\mathsf{F}$.
Now define $\widetilde{\Omega}$ as in the proof of \ref{p:1iibis},
let $\omega\in\widehat{\Omega}\cap\widetilde{\Omega}$, and let 
$x(\omega)$ and $y(\omega)$ be two points in 
$\WC(x_n(\omega))_{n\in\NN}$, say 
$x_{k_n}(\omega)\weakly x(\omega)$ and 
$x_{l_n}(\omega)\weakly y(\omega)$. Then \ref{p:1iibis} implies 
that $(\|x_n(\omega)-x(\omega)\|)_{n\in\NN}$ and 
$(\|x_n(\omega)-y(\omega)\|)_{n\in\NN}$ converge. In turn, since
\begin{equation}
(\forall n\in\NN)\quad\scal{x_n(\omega)}{x(\omega)-y(\omega)} 
=\frac12\big(\|x_n(\omega)-y(\omega)\|^2-\|x_n(\omega)-x(\omega)\|^2
+\|x(\omega)\|^2-\|y(\omega)\|^2\big),
\end{equation}
the sequence $(\scal{x_n(\omega)}{x(\omega)-y(\omega)})_{n\in\NN}$ 
converges, say 
\begin{equation}
\label{e:LNnlio07b}
\scal{x_n(\omega)}{x(\omega)-y(\omega)}\to\varrho(\omega).
\end{equation}
However, since $x_{k_n}(\omega)\weakly x(\omega)$, we have 
$\scal{x(\omega)}{x(\omega)-y(\omega)}=\varrho(\omega)$.
Likewise, passing to the limit along the subsequence 
$(x_{l_n}(\omega))_{n\in\NN}$ in \eqref{e:LNnlio07b} yields
$\scal{y(\omega)}{x(\omega)-y(\omega)}=\varrho(\omega)$. Thus,
\begin{equation}
\label{e:LNnlio08b}
0=\scal{x(\omega)}{x(\omega)-y(\omega)}-\scal{y(\omega)}
{x(\omega)-y(\omega)}=\|x(\omega)-y(\omega)\|^2.
\end{equation}
This shows that $x(\omega)=y(\omega)$. Since
$\omega\in\widetilde{\Omega}$, $(x_n(\omega))_{n\in\NN}$ is 
bounded and we invoke \cite[Lemma~2.38]{Livre1} to conclude that 
$x_n(\omega)\weakly x(\omega)\in\mathsf{F}$. Altogether, 
since $\PP(\widehat{\Omega}\cap\widetilde{\Omega})=1$,
$x_n\weakly x\;\as$ and the measurability of $x$
follows from \cite[Corollary~1.13]{Pett38}.

\ref{p:1iv}: 
Let $x\in\SC(x_n)_{n\in\NN}\cap\mathsf{F}\;\:\as$
Then there exists $\widehat{\Omega}\in\FF$ such that 
$\PP(\widehat{\Omega})=1$ and
$(\forall\omega\in\widehat{\Omega})$
$\varliminf\|x_n(\omega)-x(\omega)\|=0$.
Now let $\widetilde{\Omega}$ be as in \ref{p:1iibis} and let
$\omega\in\widetilde{\Omega}\cap\widehat{\Omega}$. Then
$\PP(\widetilde{\Omega}\cap\widehat{\Omega})=1$,
$x(\omega)\in\mathsf{F}$, and \ref{p:1iibis} implies that 
$(\|x_n(\omega)-x(\omega)\|)_{n\in\NN}$ converges. Thus,
$\lim\|x_n(\omega)-x(\omega)\|=0$. We conclude that 
$x_n\to x\;\;\as$

\ref{p:1v}$\Rightarrow$\ref{p:1iv}: 
Since $\emp\neq\SC(x_n)_{n\in\NN}\subset
\WC(x_n)_{n\in\NN}\subset\mathsf{F}\;\:\as$, we have
$\SC(x_n)_{n\in\NN}\cap\mathsf{F}\neq\emp\;\:\as$ 
\end{proof}

\begin{remark}
\label{rgb18T4-17}
Suppose that $\phi\colon t\mapsto t^2$ in \eqref{e:sqf1}. Then 
special cases of Proposition~\ref{p:1} are stated in several places 
in the literature. Thus, stochastic quasi-Fej\'er sequences were 
first discussed in \cite{Ermo69} in the case when $\HH$ is a 
Euclidean space and for every $n\in\NN$, $\vartheta_n=\chi_n=0$ 
and $\eta_n$ is deterministic. A Hilbert space version of the 
results of \cite{Ermo69} appears in \cite{Bart07} without proof. 
Finally, the case when all the processes are deterministic in 
\eqref{e:sqf1} is discussed in \cite{Else01}.
\end{remark}

The analysis of our main algorithms will rely on the following key
illustration of Proposition~\ref{p:1}. This result involves a
general stochastic iterative process and it should also be of 
interest in the analysis of the asymptotic behavior of a broad 
class of stochastic algorithms, beyond those discussed in the 
present paper.

\begin{theorem} 
\label{tHj7+1-12}
Let $\mathsf{F}$ be a nonempty closed subset of $\HH$, let 
$(\lambda_n)_{n\in \NN}$ be a sequence in $\rzeroun$, and let
$(t_{n})_{n\in\NN}$, $(x_{n})_{n\in\NN}$, and $(e_{n})_{n\in\NN}$
be sequences of $\HH$-valued random variables.
Suppose that the following hold:
\begin{enumerate}
\item
\label{tHj7+1-12ii}
$(\forall n\in\NN)$ $x_{n+1}=x_n+\lambda_n(t_n+e_n-x_n).$
\item
\label{tHj7+1-12i}
$\sum_{n\in\NN}\lambda_n\sqrt{\EC{\|e_n\|^2}{\XX_n}}<\pinf\;\;\as$
\item
\label{tHj7+1-12iii}
For every $\mathsf{z}\in\mathsf{F}$, there exist 
$(\theta_n(\mathsf{z}))_{n\in\NN}\in\ell_+({\mathscr{X}})$, 
$(\mu_n(\mathsf{z}))_{n\in\NN}\in\ell_+^\infty({\mathscr{X}})$, and 
$(\nu_n(\mathsf{z}))_{n\in\NN}\in\ell_+^\infty({\mathscr{X}})$ such 
that 
$(\lambda_n\mu_n(\mathsf{z}))_{n\in\NN}\in\ell_+^1({\mathscr{X}})$,
$(\lambda_n\nu_n(\mathsf{z}))_{n\in\NN}\in\ell_+^{1/2}
({\mathscr{X}})$, and the following is satisfied $\as$:
\begin{equation}
(\forall n\in\NN)\quad
\EC{\|t_n-\mathsf{z}\|^2}{\XX_n}+\theta_n(\mathsf{z})\leq
(1+\mu_n(\mathsf{z}))\|x_n-\mathsf{z}\|^2+\nu_n(\mathsf{z}).
\end{equation}
\end{enumerate}
Then 
\begin{equation}
\label{egb18T4-19a}
(\forall\mathsf{z}\in\mathsf{F})\quad
\bigg[\:\sum_{n\in\NN}
\lambda_n\theta_n(\mathsf{z})<\pinf\;\as\:\bigg]
\end{equation}
and
\begin{equation}
\label{egb18T4-19b}
\sum_{n\in\NN}\lambda_n(1-\lambda_n)
\EC{\|t_n-x_n\|^2}{\XX_n}<\pinf\;\as
\end{equation}
Furthermore, suppose that:
\begin{enumerate}
\setcounter{enumi}{3}
\item
\label{tHj7+1-12iv}
$\WC(x_n)_{n\in\NN}\subset\mathsf{F}\;\:\as$
\end{enumerate}
Then $(x_n)_{n\in\NN}$ converges weakly $\as$ to an 
$\mathsf{F}$-valued random variable $x$. If, in addition, 
\begin{enumerate}
\setcounter{enumi}{4}
\item
\label{tHj7+1-12v}
$\SC(x_n)_{n\in\NN}\neq\emp\;\:\as$,
\end{enumerate}
then $(x_n)_{n\in\NN}$ converges strongly $\as$ to $x$.
\end{theorem}
\begin{proof}
Let $\mathsf{z}\in\mathsf{F}$ and set 
\begin{equation}
\label{e:aprescuite}
(\forall n\in\NN)\qquad  
\varepsilon_n=\lambda_n\sqrt{\EC{\|e_n\|^2}{\XX_n}}.
\end{equation}
It follows from Jensen's inequality and \ref{tHj7+1-12iii} that
\begin{align}
\label{e:ntnzb0}
(\forall n\in\NN)\quad
\EC{\|t_n-\mathsf{z}\|}{\XX_n} 
&\leq\sqrt{\EC{\|t_n-\mathsf{z}\|^2}{\XX_n}}\nonumber\\
&\leq\sqrt{(1+\mu_n(\mathsf{z}))\|x_n-\mathsf{z}\|^2
+\nu_n(\mathsf{z})}\nonumber\\
&\leq\sqrt{1+\mu_n(\mathsf{z})}\|x_n-\mathsf{z}\|
+\sqrt{\nu_n(\mathsf{z})}\nonumber\\
&\leq(1+\mu_n(\mathsf{z})/2)\|x_n-\mathsf{z}\|
+\sqrt{\nu_n(\mathsf{z})}.
\end{align}
On the other hand, \ref{tHj7+1-12ii} and the triangle inequality 
yield
\begin{equation}
(\forall n\in\NN)\quad
\|x_{n+1}-\mathsf{z}\|\le(1-\lambda_n) \|x_n-\mathsf{z}\|+\lambda_n
\|t_n-\mathsf{z}\|+\lambda_n\|e_n\|.
\end{equation}
Consequently, 
\begin{align}
\label{e:Rob10}
(\forall n\in\NN)\quad
\EC{\|x_{n+1}-\mathsf{z}\|}{\XX_n}
&\leq\;(1-\lambda_n) \|x_n-\mathsf{z}\|+\lambda_n
\EC{\|t_n-\mathsf{z}\|}{\XX_n}+
\lambda_n\EC{\|e_n\|}{\XX_n}\nonumber\\
&\leq\Big(1+\frac{\lambda_n\mu_n(\mathsf{z})}{2}\Big)
\|x_n-\mathsf{z}\|+\lambda_n\sqrt{\nu_n(\mathsf{z})}
+\lambda_n\sqrt{\EC{\|e_n\|^2}{\XX_n}}
\nonumber\\
&=\Big(1+\frac{\lambda_n\mu_n(\mathsf{z})}{2}\Big)\|x_n-\mathsf{z}\|+
\sqrt{\lambda_n\nu_n(\mathsf{z})}+\varepsilon_n.
\end{align}
Upon applying Proposition~\ref{p:1}\ref{p:1ii} with 
$\phi\colon t\mapsto t$, we deduce from \eqref{e:Rob10} that
$(x_n)_{n\in\NN}$ is almost surely bounded and, by virtue of 
assumption~\ref{tHj7+1-12iii}, that  
$(\EC{\|t_n-\mathsf{z}\|^2}{\XX_n})_{n\in\NN}$ is likewise. Thus,
there exist $\rho_1(\mathsf{z})$ and $\rho_2(\mathsf{z})$ in $\RPP$
such that, almost surely,
\begin{equation}
\label{eHj7+1-16d}
(\forall{n\in\NN})\quad
\|x_n-\mathsf{z}\|\leq\rho_1(\mathsf{z})
\quad\text{and}\quad
\sqrt{\EC{\|t_n-\mathsf{z}\|^2}{\XX_n}}\leq\rho_2(\mathsf{z}).
\end{equation}
Now set 
\begin{equation}
\label{eHj7+1-15a}
(\forall n\in\NN)\quad 
\begin{cases}
\chi_n(\mathsf{z})=\lambda_n\mu_n(\mathsf{z})\\
\xi_n(\mathsf{z})=2\lambda_n(1-\lambda_n)\|x_n-\mathsf{z}\|
\,\|e_n\|+2\lambda_n^2\|t_n-\mathsf{z}\|\,\|e_n\|
+\lambda_n^2\|e_n\|^2\\
\vartheta_n(\mathsf{z})=\lambda_n\theta_n(\mathsf{z})+
\lambda_n(1-\lambda_n)\EC{\|t_n-x_n\|^2}{\XX_n}\\
\eta_n(\mathsf{z})=\EC{\xi_n(\mathsf{z})}{\XX_n}+
\lambda_n\nu_n(\mathsf{z}).
\end{cases}
\end{equation}
On the one hand, it follows from \eqref{eHj7+1-15a}, the
Cauchy-Schwarz inequality, and \eqref{e:aprescuite} that 
\begin{align}
\label{eHj7+1-15b}
(\forall n\in\NN)\quad\EC{\xi_n(\mathsf{z})}{\XX_n}
&=2\lambda_n(1-\lambda_n)\|x_n-\mathsf{z}\|
\,\EC{\|e_n\|}{\XX_n}\nonumber\\
&\quad\;+2\lambda_n^2\EC{\|t_n-\mathsf{z}\|\,\|e_n\|}{\XX_n}
+\lambda_n^2\EC{\|e_n\|^2}{\XX_n}\nonumber\\
&\leq 2\lambda_n\|x_n-\mathsf{z}\|
\,\sqrt{\EC{\|e_n\|^2}{\XX_n}}\nonumber\\
&\quad\;+2\lambda_n\sqrt{\EC{\|t_n-\mathsf{z}\|^2}{\XX_n}}
\sqrt{\EC{\|e_n\|^2}{\XX_n}}
+\lambda_n^2\EC{\|e_n\|^2}{\XX_n}\nonumber\\
&\leq 2(\rho_1(\mathsf{z})+\rho_2(\mathsf{z}))
\varepsilon_n+\varepsilon_n^2.
\end{align}
In turn, we deduce from \eqref{e:aprescuite}, \eqref{eHj7+1-15a},
\ref{tHj7+1-12i}, and \ref{tHj7+1-12iii} that
\begin{equation}
\label{eHj7+1-16t}
\big(\eta_n(\mathsf{z})\big)_{n\in\NN}\in\ell_+^1(\mathscr{X})
\quad\text{and}\quad
\big(\chi_n(\mathsf{z})\big)_{n\in\NN}\in\ell_+^1(\mathscr{X}).
\end{equation}
On the other hand, we derive from \ref{tHj7+1-12ii},
\cite[Corollary~2.14]{Livre1}, and \eqref{eHj7+1-15a} that
\begin{align}
\label{e:dusoir3007O}
(\forall n\in\NN)\quad\|x_{n+1}-\mathsf{z}\|^2
&=\|(1-\lambda_n) (x_n-\mathsf{z})+\lambda_n
(t_n-\mathsf{z})\|^2\nonumber\\
&\quad+2 \lambda_n\scal{(1-\lambda_n)
(x_n-\mathsf{z})+\lambda_n\big(t_n-\mathsf{z}\big)}{e_n} 
+\lambda_n^2\|e_n\|^2\nonumber\\
&\leq(1-\lambda_n)\|x_n-\mathsf{z}\|^2
+\lambda_n\|t_n-\mathsf{z}\|^2\nonumber\\
&\quad\;-\lambda_n(1-\lambda_n)
\|t_n-x_n\|^2+\xi_n(\mathsf{z}).
\end{align}
Hence, \ref{tHj7+1-12iii}, \eqref{eHj7+1-15a}, and 
\eqref{eHj7+1-15b} imply that
\begin{align}
\label{eHj7+1-16y}
&\hskip -6mm
(\forall n\in\NN)\quad
\EC{\|x_{n+1}-\mathsf{z}\|^2}{\XX_n}\nonumber\\
&\leq(1-\lambda_n)\|x_n-\mathsf{z}\|^2+
\lambda_n\EC{\|t_n-\mathsf{z}\|^2}{\XX_n}
-\lambda_n(1-\lambda_n)\EC{\|t_n-x_n\|^2}{\XX_n}
+\EC{\xi_n(\mathsf{z})}{\XX_n}\nonumber\\
&\leq(1-\lambda_n)\|x_n-\mathsf{z}\|^2+\lambda_n\big(
(1+\mu_n(\mathsf{z}))\|x_n-\mathsf{z}\|^2+\nu_n(\mathsf{z})
-\theta_n(\mathsf{z})\big)
\nonumber\\
&\quad\;-\lambda_n(1-\lambda_n)\EC{\|t_n-x_n\|^2}{\XX_n}
+\EC{\xi_n(\mathsf{z})}{\XX_n}\nonumber\\
&\leq(1+\chi_n(\mathsf{z}))\|x_n-\mathsf{z}\|^2-
\vartheta_n(\mathsf{z})+\eta_n(\mathsf{z}).
\end{align}
Thus, in view of \eqref{eHj7+1-16t}, applying 
Proposition~\ref{p:1}\ref{p:1i} with 
$\phi\colon t\mapsto t^2$ yields
$\sum_{n\in\NN}\vartheta_n(\mathsf{z})<\pinf\;\as$ and it 
follows from \eqref{eHj7+1-15a} that \eqref{egb18T4-19a}
and \eqref{egb18T4-19b} are established. Finally, the weak 
convergence assertion follows from \ref{tHj7+1-12iv} and 
Proposition~\ref{p:1}\ref{p:1iii} applied with 
$\phi\colon t\mapsto t^2$. Likewise, the strong 
convergence assertion follows from 
\ref{tHj7+1-12iv}--\ref{tHj7+1-12v} and 
Proposition~\ref{p:1}\ref{p:1v} applied with 
$\phi\colon t\mapsto t^2$.
\end{proof}

\begin{definition}
\label{dQ6t5Ds2-30}
An operator ${\mathsf T}\colon\HH\to\HH$ is 
\emph{nonexpansive} if it is $1$-Lipschitz, and \emph{demicompact}
at $\mathsf{y}\in\HH$ if for every bounded sequence
$(\mathsf{y}_n)_{n\in\NN}$ in $\HH$ such that 
$\mathsf{T}\mathsf{y}_n-\mathsf{y}_n\to\mathsf{y}$, 
we have $\SC(\mathsf{y}_n)_{n\in\NN}\neq\emp$ \cite{Petr66}.
\end{definition}

Although our primary objective is to apply 
Theorem~\ref{tHj7+1-12} to
block-coordinate methods, it also yields new results for classical
methods. As an illustration, the following application describes a
Krasnosel'ski\u\i--Mann iteration with stochastic errors. 

\begin{corollary}
\label{cgb18T2-27}
Let $(\lambda_n)_{n\in\NN}$ be a sequence in $[0,1]$ such that 
$\sum_{n\in\NN}\lambda_n(1-\lambda_n)=\pinf$ and let 
${\mathsf T}\colon\HH\to\HH$ be a nonexpansive operator such that 
$\Fix\mathsf{T}\neq\emp$. Let $x_0$ and $(e_n)_{n\in\NN}$ 
be $\HH$-valued random variables. Iterate
\begin{equation}
\label{egb18T2-09x}
\begin{array}{l}
\text{for}\;n=0,1,\ldots\\
\left\lfloor
\begin{array}{l}
x_{n+1}=x_n+\lambda_n\big(\mathsf{T}x_n+e_n-x_n\big).
\end{array} 
\right.
\end{array} 
\end{equation}
In addition, assume that $\sum_{n\in\NN}\lambda_n
\sqrt{\EC{\|e_n\|^2}{\XX_n}}<\pinf$ $\as$
Then the following hold:
\begin{enumerate}
\item
\label{cgb18T2-27i}
$(x_n)_{n\in\NN}$ converges weakly $\as$ to a 
$(\Fix{\mathsf T})$-valued random variable.
\item
\label{cgb18T2-27ii}
Suppose that $\mathsf{T}$ is demicompact at $\mathsf{0}$
(see Definition~\ref{dQ6t5Ds2-30}).
Then $(x_n)_{n\in\NN}$ converges strongly $\as$ to a 
$(\Fix{\mathsf T})$-valued random variable.
\end{enumerate}
\end{corollary}
\begin{proof}
Set $\mathsf{F}=\Fix\mathsf{T}$. Since $\mathsf{T}$ is continuous, 
$\mathsf{T}$ is measurable and $\mathsf{F}$ is closed. Now let 
$\mathsf{z}\in\mathsf{F}$ and set $(\forall n\in\NN)$ 
$t_n=\mathsf{T}x_n$. Then, using the nonexpansiveness of 
$\mathsf{T}$, we obtain
\begin{equation}
\label{egb18T2-28b}
(\forall n\in\NN)\quad 
\begin{cases}
x_{n+1}=x_n+\lambda_n(t_n+e_n-x_n)\\
\EC{\|t_n-x_n\|^2}{\XX_n}=\|\mathsf{T}x_n-x_n\|^2\\
\EC{\|t_n-\mathsf{z}\|^2}{\XX_n}=
\|\mathsf{T}x_n-\mathsf{T}\mathsf{z}\|^2
\leq\|x_n-\mathsf{z}\|^2.
\end{cases}
\end{equation}
It follows that properties 
\ref{tHj7+1-12ii}--\ref{tHj7+1-12iii} in 
Theorem~\ref{tHj7+1-12} are satisfied with 
$(\forall n\in\NN)$ $\theta_n=0$, $\mu_n=0$, and $\nu_n=0$. Hence,
\eqref{egb18T4-19b} and \eqref{egb18T2-28b} imply the existence 
of $\widetilde{\Omega}\in\FF$ such that $\PP(\widetilde{\Omega})=1$
and
\begin{equation}
\label{egb18T2-28c}
(\forall\omega\in\widetilde{\Omega})\quad
\sum_{n\in\NN}\lambda_n(1-\lambda_n)
\|\mathsf{T}x_n(\omega)-x_n(\omega)\|^2<\pinf.
\end{equation}
Moreover,
\begin{align}
\label{egb18T2-20a}
(\forall n\in\NN)\quad
\|{\mathsf T}{x}_{n+1}-{x}_{n+1}\|
&=\|{\mathsf T}{x}_{n+1}-{\mathsf T}{x}_n
+(1-\lambda_n)({\mathsf T}{x}_n-{x}_n)
-\lambda_ne_n\|\nonumber\\
&\leq\|{\mathsf T}{x}_{n+1}-{\mathsf T}{x}_n\|+
(1-\lambda_n)\|{\mathsf T}{x}_n-{x}_n\|+\lambda_n\|e_n\|
\nonumber\\
&\leq\|{x}_{n+1}-{x}_n\|+(1-\lambda_n)
\|{\mathsf T}{x}_n-{x}_n\|+\lambda_n\|e_n\|
\nonumber\\
&\leq\lambda_n\|{\mathsf T}{x}_n-{x}_n\|+(1-\lambda_n)
\|{\mathsf T}{x}_n-{x}_n\|+2\lambda_n\|e_n\|
\nonumber\\
&=\|{\mathsf T}{x}_n-{x}_n\|+2\lambda_n\|e_n\|
\end{align}
and, therefore,
\begin{align}
(\forall n\in\NN)\quad
\EC{\|{\mathsf T}{x}_{n+1}-{x}_{n+1}\|}{\XX_n}
&\leq\|{\mathsf T}{x}_n-{x}_n\|+2\lambda_n \EC{\|e_n\|}{\XX_n}
\nonumber\\
&\leq\|{\mathsf T}{x}_n-{x}_n\|+2\lambda_n
\sqrt{\EC{\|e_n\|^2}{\XX_n}}.
\end{align}
In turn, Lemma~\ref{l:1971} implies that there exists 
$\widehat{\Omega}\subset\widetilde{\Omega}$ such that
$\widehat{\Omega}\in\FF$, $\PP(\widehat{\Omega})=1$, and,
for every $\omega\in\widehat{\Omega}$,
$(\|\mathsf{T}x_n(\omega)-x_n(\omega)\|)_{n\in\NN}$ converges. 

\ref{cgb18T2-27i}:
It is enough to establish property \ref{tHj7+1-12iv} of 
Theorem~\ref{tHj7+1-12}. Let $\omega\in\widehat{\Omega}$ and 
let $\mathsf{x}\in\WC(x_n(\omega))_{n\in\NN}$, say 
$x_{k_n}(\omega)\weakly\mathsf{x}$. In view of 
\eqref{egb18T2-28c}, since 
$\sum_{n\in\NN}\lambda_n(1-\lambda_n)=\pinf$, we have
$\varliminf\|\mathsf{T}x_n(\omega)-x_n(\omega)\|=0$. Therefore,
\begin{equation}
\label{eQ6t5Ds2-28a}
\|\mathsf{T}x_n(\omega)-x_n(\omega)\|\to 0. 
\end{equation}
Altogether, $x_{k_n}(\omega)\weakly\mathsf{x}$
and $\mathsf{T}x_{k_n}(\omega)-x_{k_n}(\omega)\to 0$.
Since $\mathsf{T}$ is nonexpansive, the demiclosed 
principle \cite[Corollary~4.18]{Livre1} asserts that 
$\mathsf{x}\in\mathsf{F}$.

\ref{cgb18T2-27ii}:
It is enough to establish property \ref{tHj7+1-12v} of 
Theorem~\ref{tHj7+1-12}. Let $\omega\in\widehat{\Omega}$. 
As shown above, $(x_n(\omega))_{n\in\NN}$ converges weakly and
it is therefore bounded \cite[Lemma~2.38]{Livre1}. Hence, by
demicompactness, \eqref{eQ6t5Ds2-28a} implies that 
$\SC(x_n(\omega))_{n\in\NN}\neq\emp$.
Thus, $\SC(x_n)_{n\in\NN}\neq\emp\;\:\as$
\end{proof}

\begin{remark}
\label{rgb18T2-28}
Corollary~\ref{cgb18T2-27} extends \cite[Theorem~5.5]{Else01}, 
which is restricted to deterministic processes and therefore less 
realistic error models. As shown in \cite{Livre1,Cegi12,Else01}, 
the Krasnosel'ski\u\i--Mann iteration process is at the core of 
many algorithms in variational problems and optimization. 
Corollary~\ref{cgb18T2-27} therefore provides stochastically 
perturbed versions of these algorithms.  
\end{remark}

\section{Single-layer random block-coordinate fixed point algorithms}
\label{sec:3}

In the remainder of the paper, the following notation will be used.

\begin{notation}
\label{n:2}
$\HH_1,\ldots,\HH_m$ are separable
real Hilbert spaces and $\HHH=\HH_1\oplus\cdots\oplus\HH_m$ is 
their direct Hilbert sum. The scalar products and associated norms 
of these spaces are all denoted by 
$\scal{\cdot}{\cdot}$ and $\|\cdot\|$, respectively, and
$\boldsymbol{\mathsf x}=({\mathsf x}_1,\ldots,{\mathsf x}_m)$
denotes a generic vector in $\HHH$. Given a sequence 
$(\boldsymbol{x}_n)_{n\in\NN}=
(x_{1,n},\ldots,x_{m,n})_{n\in\NN}$ of $\HHH$-valued random 
variables, we set $(\forall n\in\NN)$
$\XXX_n=\sigma(\boldsymbol{x}_0,\ldots,\boldsymbol{x}_n)$. 
\end{notation}

We recall that an operator $\boldsymbol{\mathsf{T}}\colon\HHH\to\HHH$
with fixed point set $\Fix\boldsymbol{\mathsf{T}}$ is 
quasinonexpansive if \cite{Livre1}
\begin{equation}
\label{e:quasi}
(\forall\boldsymbol{\mathsf{z}}\in\Fix
\boldsymbol{\mathsf{T}})(\forall \boldsymbol{\mathsf{x}}\in\HHH)
\quad\|\boldsymbol{\mathsf{T}}\boldsymbol{\mathsf{x}}-
\boldsymbol{\mathsf{z}}\|\leq\|\boldsymbol{\mathsf{x}}-
\boldsymbol{\mathsf{z}}\|.
\end{equation}

\begin{theorem}
\label{tgb18T2-12}
Let $(\lambda_n)_{n\in\NN}$ be a sequence in $\zeroun$ such that
$\inf_{n\in\NN}\lambda_n>0$ and $\sup_{n\in\NN}\lambda_n<1$
and set $\mathsf{D}=\{0,1\}^m\smallsetminus
\{\boldsymbol{\mathsf{0}}\}$. For every $n\in\NN$, let 
$\boldsymbol{\mathsf T}_{\!n}\colon\HHH\to\HHH\colon
\boldsymbol{\mathsf x}\mapsto({\mathsf T}_{\!i,n}\,
\boldsymbol{\mathsf x})_{1\leq i\leq m}$ be a quasinonexpansive 
operator where, for every $i\in\{1,\ldots,m\}$, 
${\mathsf T}_{\!i,n}\colon\HHH\to\HH_i$ is measurable.
Let $\boldsymbol{x}_0$ and $(\boldsymbol{a}_n)_{n\in\NN}$ 
be $\HHH$-valued random variables, and let 
$(\boldsymbol{\varepsilon}_n)_{n\in\NN}$ be identically distributed 
$\mathsf{D}$-valued random variables. Iterate
\begin{equation}
\label{egb18T2-09}
\begin{array}{l}
\text{for}\;n=0,1,\ldots\\
\left\lfloor
\begin{array}{l}
\text{for}\;i=1,\ldots,m\\
\left\lfloor
\begin{array}{l}
x_{i,n+1}=x_{i,n}+\varepsilon_{i,n}\lambda_n\big(
{\mathsf T}_{\!i,n}\,(x_{1,n},\ldots,x_{m,n})+a_{i,n}-x_{i,n}\big),
\end{array} 
\right.
\end{array} 
\right.
\end{array} 
\end{equation}
and set $(\forall n\in\NN)$ 
$\EEE_n=\sigma(\boldsymbol{\varepsilon}_n)$.
In addition, assume that the following hold:
\begin{enumerate}
\item
\label{tgb18T2-12i}
$\boldsymbol{\mathsf F}=\bigcap_{n\in\NN}
\Fix\boldsymbol{\mathsf T}_{\!n}\neq\emp$.
\item
\label{tgb18T2-12ii}
$\sum_{n\in\NN}\sqrt{\EC{\|\boldsymbol{a}_n\|^2}{\XXX_n}}<\pinf$.
\item
\label{tgb18T2-12iv}
For every $n\in\NN$, $\EEE_n$ and $\XXX_n$ are independent.
\item
\label{tgb18T2-12v}
$(\forall i\in\{1,\ldots,m\})$ 
$\mathsf{p}_i=\PP[\varepsilon_{i,0}=1]>0$.
\end{enumerate}
Then
\begin{equation}
\label{eQ6t5Ds2-30c}
\boldsymbol{\mathsf T}_{\!n}\boldsymbol{x}_n-
\boldsymbol{x}_n\to\boldsymbol{0}\;\:\as
\end{equation}
Furthermore, suppose that:
\begin{enumerate}
\setcounter{enumi}{4}
\item
\label{tgb18T2-12iii}
$\WC(\boldsymbol{x}_n)_{n\in\NN}\subset\boldsymbol{\mathsf{F}}
\;\:\as$
\end{enumerate}
Then $(\boldsymbol{x}_n)_{n\in\NN}$ converges weakly $\as$ to an 
$\boldsymbol{\mathsf{F}}$-valued random variable $\boldsymbol{x}$. 
If, in addition, 
\begin{enumerate}
\setcounter{enumi}{5}
\item
\label{tgb18T2-12vi}
$\SC(\boldsymbol{x}_n)_{n\in\NN}\neq\emp\;\:\as$,
\end{enumerate}
then $(\boldsymbol{x}_n)_{n\in\NN}$ converges strongly 
$\as$ to $\boldsymbol{x}$.
\end{theorem}
\begin{proof}
We define a norm $|||\cdot|||$ on $\HHH$ by
\begin{equation}
\label{egb18T2-14}
(\forall\boldsymbol{\mathsf{x}}\in\HHH)\quad
|||\boldsymbol{\mathsf{x}}|||^2=\sum_{i=1}^m\frac{1}{\mathsf{p}_i}
\|\mathsf{x}_i\|^2.
\end{equation}
We are going to apply Theorem~\ref{tHj7+1-12} in 
$(\HHH,|||\cdot|||)$. Let us set
\begin{equation}
\label{egb18T2-16}
(\forall n\in\NN)\quad
\begin{cases}
\boldsymbol{t}_n=(t_{i,n})_{1\leq i\leq m}\\
\boldsymbol{e}_n=(\varepsilon_{i,n}a_{i,n})_{1\leq i\leq m},
\end{cases}
\,\text{where}\;
(\forall i\in\{1,\ldots,m\})\;
t_{i,n}=x_{i,n}+\varepsilon_{i,n}({\mathsf T}_{\!i,n}\,
\boldsymbol{x}_n-x_{i,n}).
\end{equation}
Then it follows from \eqref{egb18T2-09} that
\begin{equation}
\label{egb18T2-17a}
(\forall n\in\NN)\quad
\boldsymbol{x}_{n+1}=\boldsymbol{x}_{n}+\lambda_n\big(
\boldsymbol{t}_n+\boldsymbol{e}_n-\boldsymbol{x}_{n}\big),
\end{equation}
while \ref{tgb18T2-12ii} implies that
\begin{equation}
\label{egb18T2-17b}
\sum_{n\in\NN}\lambda_n\EC{|||\boldsymbol{e}_n|||^2}{\XXX_n}\leq
\sum_{n\in\NN}\EC{|||\boldsymbol{a}_n|||^2}{\XXX_n}<\pinf.
\end{equation}
Since the operators $(\boldsymbol{\mathsf{T}}_{\!n})_{n\in \NN}$ are 
quasinonexpansive, $\boldsymbol{\mathsf{F}}$ is closed 
\cite[Section~2]{Moor01}. Now let 
$\boldsymbol{\mathsf{z}}\in\boldsymbol{\mathsf{F}}$ and set
\begin{equation}
\label{egb18T4-13a}
(\forall n\in\NN)(\forall i\in\{1,\ldots,m\})\quad
\mathsf{q}_{i,n}\colon\HHH\times\mathsf{D}\to\RR\colon
(\boldsymbol{\mathsf{x}},\boldsymbol{\epsilon})\mapsto
\|\mathsf{x}_i-\mathsf{z}_i+\epsilon_i({\mathsf T}_{\!i,n}\,
\boldsymbol{\mathsf{x}}-\mathsf{x}_i)\|^2.
\end{equation}
Note that, for every $n\in\NN$ and every $i\in\{1,\ldots,m\}$, 
since ${\mathsf T}_{\!i,n}$ is measurable, so are the functions 
$(\mathsf{q}_{i,n}(\cdot,\boldsymbol{\epsilon}))%
_{\boldsymbol{\epsilon}\in\mathsf{D}}$. Consequently, since, 
for every $n\in\NN$, \ref{tgb18T2-12iv} asserts that the events 
$([\boldsymbol{\varepsilon}_n=
\boldsymbol{\epsilon}])_{\boldsymbol{\epsilon}\in\mathsf{D}}$ form 
an almost sure partition of $\Omega$ and are independent 
from $\XXX_n$, and since the random variables 
$(\mathsf{q}_{i,n}(\boldsymbol{x}_n,\boldsymbol{\epsilon}))%
_{\substack{1\leq i\leq m\\ \boldsymbol{\epsilon}\in\mathsf{D}}}$ 
are $\XXX_n$-measurable, we obtain \cite[Section~28.2]{LoevII} 
\begin{align}
\label{egb18T4-15c}
(\forall n\in\NN)(\forall i\in\{1,\ldots,m\})\quad
\EC{\|t_{i,n}-\mathsf{z}_{i}\|^2}{\XXX_n}
&=\EEC{\mathsf{q}_{i,n}(\boldsymbol{x}_n,\boldsymbol{\varepsilon}_n)
\sum_{\boldsymbol{\epsilon}\in\mathsf{D}}
1_{[\boldsymbol{\varepsilon}_n=\boldsymbol{\epsilon}]}}{\XXX_n}
\nonumber\\
&=\sum_{\boldsymbol{\epsilon}\in\mathsf{D}}
\EC{\mathsf{q}_{i,n}(\boldsymbol{x}_n,\boldsymbol{\epsilon})
1_{[\boldsymbol{\varepsilon}_n=\boldsymbol{\epsilon}]}}{\XXX_n}
\nonumber\\
&=\sum_{\boldsymbol{\epsilon}\in\mathsf{D}}
\EC{1_{[\boldsymbol{\varepsilon}_n=\boldsymbol{\epsilon}]}}{\XXX_n}
\mathsf{q}_{i,n}(\boldsymbol{x}_n,\boldsymbol{\epsilon})\nonumber\\
&=\sum_{\boldsymbol{\epsilon}\in\mathsf{D}}
\PP[\boldsymbol{\varepsilon}_n=\boldsymbol{\epsilon}]
\mathsf{q}_{i,n}(\boldsymbol{x}_n,\boldsymbol{\epsilon}).
\end{align}
Thus, \eqref{egb18T2-14}, \eqref{egb18T2-16}, 
\eqref{egb18T4-15c}, \eqref{egb18T4-13a}, \ref{tgb18T2-12v}, 
and \eqref{e:quasi} yield
\begin{align}
\label{egb18T2-18}
&\hskip -6mm
(\forall n\in\NN)\quad
\EC{|||\boldsymbol{t}_n-\boldsymbol{\mathsf{z}}|||^2}
{\XXX_n}\nonumber\\
&=\sum_{i=1}^m\frac{1}{\mathsf{p}_i}
\EC{\|t_{i,n}-\mathsf{z}_{i}\|^2}{\XXX_n}\nonumber\\
&=\sum_{i=1}^m\frac{1}{\mathsf{p}_i}
\sum_{\boldsymbol{\epsilon}\in\mathsf{D}}
\PP[\boldsymbol{\varepsilon}_n=\boldsymbol{\epsilon}]
\|x_{i,n}-\mathsf{z}_i+\epsilon_{i}({\mathsf T}_{\!i,n}\,
\boldsymbol{x}_n-x_{i,n})\|^2
\nonumber\\
&=\sum_{i=1}^m\frac{1}{\mathsf{p}_i}
\left(\sum_{\boldsymbol{\epsilon}\in\mathsf{D},\epsilon_i=1}
\PP[\boldsymbol{\varepsilon}_n=\boldsymbol{\epsilon}]\,
\|\mathsf{T}_{\!i,n}\,\boldsymbol{x}_n-\mathsf{z}_{i}\|^2+
\sum_{\substack{\boldsymbol{\epsilon}\in\mathsf{D},\,\epsilon_i=0}}
\PP[\boldsymbol{\varepsilon}_n=\boldsymbol{\epsilon}]\,
\|x_{i,n}-\mathsf{z}_{i}\|^2\right)\nonumber\\
&=\|\boldsymbol{\mathsf{T}_{\!n}}\boldsymbol{x}_n-
\boldsymbol{\mathsf{z}}\|^2+
\sum_{i=1}^m \frac{1-\mathsf{p}_i}{\mathsf{p}_i} 
\|x_{i,n}-\mathsf{z}_{i}\|^2\nonumber\\
&=|||\boldsymbol{x}_n-\boldsymbol{\mathsf z}|||^2+
\|\boldsymbol{\mathsf T}_{\!n}\boldsymbol{x}_n-
\boldsymbol{\mathsf z}\|^2
-\|\boldsymbol{x}_n-\boldsymbol{\mathsf z}\|^2\nonumber\\
&\leq|||\boldsymbol{x}_n-\boldsymbol{\mathsf z}|||^2.
\end{align}
Altogether, properties \ref{tHj7+1-12ii}--\ref{tHj7+1-12iii} 
of Theorem~\ref{tHj7+1-12} are satisfied with $(\forall n\in\NN)$
$\theta_n=\mu_n=\nu_n=0$. We therefore derive from 
\eqref{egb18T4-19b} that 
$\sum_{n\in\NN}\lambda_n(1-\lambda_n)
\EC{|||\boldsymbol{t}_n-\boldsymbol{x}_n|||^2}{\XXX_n}<\pinf\;\as$
In view of our conditions on $(\lambda_n)_{n\in\NN}$, this yields
\begin{equation}
\label{egb18T3-04b}
\EC{|||\boldsymbol{t}_n-\boldsymbol{x}_n|||^2}{\XXX_n}\to 0\;\:\as
\end{equation}
On the other hand, proceeding as in \eqref{egb18T4-15c} leads to
\begin{equation}
\label{egb18T4-13x}
(\forall n\in\NN)(\forall i\in\{1,\ldots,m\})\quad
\EC{\|t_{i,n}-x_{i,n}\|^2}{\XXX_n}=
\sum_{\boldsymbol{\epsilon}\in\mathsf{D}}
\epsilon_i\PP[\boldsymbol{\varepsilon}_n=\boldsymbol{\epsilon}]
\|{\mathsf T}_{\!i,n}\,\boldsymbol{x}_n-x_{i,n}\|^2.
\end{equation}
Hence, it follows from \eqref{egb18T2-14},
\eqref{egb18T2-16}, and \ref{tgb18T2-12v} that
\begin{align}
\label{egb18T2-27}
(\forall n\in\NN)\quad
\EC{|||\boldsymbol{t}_n-\boldsymbol{x}_n|||^2}
{\boldsymbol{\XX}_n}
&=\sum_{i=1}^m\frac{1}{\mathsf{p}_i}\EC{\|t_{i,n}-x_{i,n}\|^2}
{\boldsymbol{\XX}_n}\nonumber\\
&=\sum_{i=1}^m\frac{1}{\mathsf{p}_i}
\sum_{\boldsymbol{\epsilon}\in\mathsf{D}}\epsilon_i
\PP[\boldsymbol{\varepsilon}_n=\boldsymbol{\epsilon}]
\|{\mathsf T}_{\!i,n}\,\boldsymbol{x}_n-x_{i,n}\|^2
\nonumber\\
&=\sum_{i=1}^m\frac{1}{\mathsf{p}_i}
\sum_{\boldsymbol{\epsilon}\in\mathsf{D},\epsilon_i=1}
\PP[\boldsymbol{\varepsilon}_n=\boldsymbol{\epsilon}]
\|{\mathsf T}_{\!i,n}\,\boldsymbol{x}_n-x_{i,n}\|^2
\nonumber\\
&=\|\boldsymbol{\mathsf T}_{\!n}\boldsymbol{x}_n-
\boldsymbol{x}_n\|^2.
\end{align}
Accordingly, \eqref{egb18T3-04b} yields 
$\boldsymbol{\mathsf T}_{\!n}\boldsymbol{x}_n-
\boldsymbol{x}_n\to\boldsymbol{0}\;\:\as$
In turn, the weak and strong convergence assertions are 
consequences of Theorem~\ref{tHj7+1-12}.
\end{proof}

\begin{remark} 
\label{r:match}
Let us make a few comments about Theorem~\ref{tgb18T2-12}.
\begin{enumerate}
\item 
The binary variable $\varepsilon_{i,n}$ signals whether the 
$i$-th coordinate $\mathsf{T}_{\!i,n}$ of the operator
$\boldsymbol{\mathsf{T}}_{\!n}$ is activated or not at iteration 
$n$.
\item 
\label{r:matchi} 
Assumption~\ref{tgb18T2-12v} guarantees that each operator in 
$({\mathsf T}_{\!i,n})_{1\leq i\leq m}$ is activated with a 
nonzero probability at each iteration $n$ of 
Algorithm~\eqref{e:main1}. The simplest scenario corresponds to 
the case when the block sweeping process assigns nonzero 
probabilities to multivariate indices 
$\boldsymbol{\epsilon}\in\mathsf{D}$ 
having a single component equal to 1. Then only one of the 
operators in $({\mathsf T}_{\!i,n})_{1\leq i\leq m}$ is activated 
randomly. In general, the sweeping rule allows for an arbitrary
sampling of the indices $\{1,\ldots,m\}$.
\item 
\label{r:matchiii} 
In view of \eqref{eQ6t5Ds2-30c}, \ref{tgb18T2-12iii} is 
satisfied if there exists $\widehat{\Omega}\in\FF$ such that
$\PP(\widehat{\Omega})=1$ and
\begin{equation}
\label{eQ6t5Ds2-30a}
(\forall\omega\in\widehat{\Omega})\quad
\Big[\,\boldsymbol{\mathsf T}_{\!n}\boldsymbol{x}_n(\omega)-
\boldsymbol{x}_n(\omega)\to\boldsymbol{0}\quad\Rightarrow\quad
\WC(\boldsymbol{x}_n(\omega))_{n\in\NN}
\subset\boldsymbol{\mathsf{F}}\,\Big].
\end{equation}
In the deterministic case, this is akin to the focusing 
conditions of \cite{Baus96}; see \cite{Baus96,Moor01,Else01}
for examples of suitable sequences 
$(\boldsymbol{\mathsf{T}}_{\!n})_{n\in\NN}$.
Likewise, \ref{tgb18T2-12vi} is satisfied if
there exists $\widehat{\Omega}\in\FF$ such that
$\PP(\widehat{\Omega})=1$ and
\begin{equation}
\label{eQ6t5Ds2-30b}
(\forall\omega\in\widehat{\Omega})\quad
\bigg[\,\bigg[\,\sup_{n\in\NN}\|\boldsymbol{x}_n(\omega)\|
<\pinf\quad\text{and}
\quad\boldsymbol{\mathsf T}_{\!n}\boldsymbol{x}_n(\omega)-
\boldsymbol{x}_n(\omega)\to\boldsymbol{0}\,\bigg]
\;\;\Rightarrow\;\;
\SC(\boldsymbol{x}_n(\omega))_{n\in\NN}\neq\emp\,\bigg].
\end{equation}
In the deterministic case, this is the demicompactness 
regularity condition of \cite[Definition~6.5]{Else01}.
Examples of suitable sequences 
$(\boldsymbol{\mathsf{T}}_{\!n})_{n\in\NN}$ 
are provided in \cite{Else01}.
\end{enumerate}
\end{remark}

Our first corollary is a random block-coordinate version of the 
Krasnosel'ski\u\i--Mann iteration.

\begin{corollary}
\label{cgb18T3-04}
Let $(\lambda_n)_{n\in\NN}$ be a sequence in $\zeroun$ such that
$\inf_{n\in\NN}\lambda_n>0$ and $\sup_{n\in\NN}\lambda_n<1$, set 
$\mathsf{D}=\{0,1\}^m\smallsetminus\{\boldsymbol{\mathsf{0}}\}$, 
and let $\boldsymbol{\mathsf T}\colon\HHH\to\HHH\colon
\boldsymbol{\mathsf x}\mapsto({\mathsf T}_{\!i}\,
\boldsymbol{\mathsf x})_{1\leq i\leq m}$ be a nonexpansive operator
such that $\Fix\boldsymbol{\mathsf T}\neq\emp$ where, for every 
$i\in\{1,\ldots,m\}$, ${\mathsf T}_{\!i}\colon\HHH\to\HH_i$.
Let $\boldsymbol{x}_0$ and $(\boldsymbol{a}_n)_{n\in\NN}$ be 
$\HHH$-valued random variables, and let 
$(\boldsymbol{\varepsilon}_n)_{n\in\NN}$ be identically distributed
$\mathsf{D}$-valued random variables. Iterate
\begin{equation}
\label{egb18T3-05a}
\begin{array}{l}
\text{for}\;n=0,1,\ldots\\
\left\lfloor
\begin{array}{l}
\text{for}\;i=1,\ldots,m\\
\left\lfloor
\begin{array}{l}
x_{i,n+1}=x_{i,n}+\varepsilon_{i,n}\lambda_n\big({\mathsf T}_{\!i}\,
(x_{1,n},\ldots,x_{m,n})+a_{i,n}-x_{i,n}\big),
\end{array} 
\right.
\end{array} 
\right.
\end{array} 
\end{equation}
and set $(\forall n\in\NN)$ 
$\EEE_n=\sigma(\boldsymbol{\varepsilon}_n)$. In addition, assume 
that properties \ref{tgb18T2-12ii}--\ref{tgb18T2-12v} of 
Theorem~\ref{tgb18T2-12} hold.
Then $(\boldsymbol{x}_n)_{n\in\NN}$ converges weakly
$\as$ to a $(\Fix\boldsymbol{\mathsf T})$-valued random variable.
The convergence is strong if $\boldsymbol{\mathsf T}$ is 
demicompact at $\mathsf{0}$ (see Definition~\ref{dQ6t5Ds2-30}).
\end{corollary}
\begin{proof}
This is an application of Theorem~\ref{tgb18T2-12} with
$\boldsymbol{\mathsf{F}}=\Fix\boldsymbol{\mathsf{T}}$ and 
$(\forall n\in\NN)$ 
$\boldsymbol{\mathsf{T}}_{\!n}=\boldsymbol{\mathsf{T}}$. Indeed,
\eqref{eQ6t5Ds2-30a} follows from the demiclosed principle 
\cite[Corollary~4.18]{Livre1} and \eqref{eQ6t5Ds2-30b} follows from
the demicompactness assumption.
\end{proof}

\begin{remark}
A special case of Corollary~\ref{cgb18T3-04} appears in 
\cite{Iutz13}. It corresponds to the scenario in which $\HHH$ is 
finite-dimensional, $\boldsymbol{\mathsf T}$ is firmly nonexpansive,
and, for every $n\in\NN$, $\lambda_n=1$, 
$\boldsymbol{a}_n=\boldsymbol{0}$, and only one block is activated
as in Remark~\ref{r:match}\ref{r:matchi}. 
Let us also note that a renorming similar to that performed 
in \eqref{egb18T2-14} was employed in \cite{Nest12}.
\end{remark}

Next, we consider the construction of a fixed point of a family of 
averaged operators.

\begin{definition}
\label{d:averaged}
Let $\mathsf{T}\colon\HH\to\HH$ be nonexpansive and let 
$\alpha\in\zeroun$. Then $\mathsf{T}$ is \emph{averaged} 
with constant $\alpha$, or $\alpha$-averaged, if there exists 
a nonexpansive operator $\mathsf{R}\colon\HH\to\HH$ such that
$\mathsf{T}=(1-\alpha)\Id+\alpha\mathsf{R}$.
\end{definition}

\begin{proposition}{\rm\cite[Proposition~4.25]{Livre1}}
\label{p:av1}
Let $\mathsf{T}\colon\HH\to\HH$ be nonexpansive and let 
$\alpha\in\zeroun$. Then 
$\mathsf{T}$ is $\alpha$-averaged if and only if
$(\forall \mathsf{x}\in\HH)(\forall \mathsf{y}\in\HH)$ 
$\|\mathsf{T}\mathsf{x}-\mathsf{T}\mathsf{y}\|^2\leq
\|\mathsf{x}-\mathsf{y}\|^2-\displaystyle{\frac{1-\alpha}{\alpha}}
\|(\Id-\mathsf{T})\mathsf{x}-(\Id-\mathsf{T})\mathsf{y}\|^2$.
\end{proposition}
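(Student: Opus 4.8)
The plan is to recast $\alpha$-averagedness as the nonexpansiveness of an auxiliary operator and then translate the latter into the displayed quadratic inequality by a single exact expansion. Solving the defining relation $\mathsf{T}=(1-\alpha)\Id+\alpha\mathsf{R}$ for $\mathsf{R}$ gives $\mathsf{R}=\Id+\alpha^{-1}(\mathsf{T}-\Id)$, and since this is a bijective correspondence between operators $\mathsf{R}$ and operators $\mathsf{T}$ for fixed $\alpha\in\zeroun$, the operator $\mathsf{T}$ is $\alpha$-averaged if and only if this particular $\mathsf{R}$ is nonexpansive, i.e.
\begin{equation*}
(\forall\mathsf{x}\in\HH)(\forall\mathsf{y}\in\HH)\quad
\|\mathsf{R}\mathsf{x}-\mathsf{R}\mathsf{y}\|^2\le\|\mathsf{x}-\mathsf{y}\|^2.
\end{equation*}

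First I would fix $\mathsf{x},\mathsf{y}\in\HH$ and write $\mathsf{R}\mathsf{x}-\mathsf{R}\mathsf{y}=(1-\alpha^{-1})(\mathsf{x}-\mathsf{y})+\alpha^{-1}(\mathsf{T}\mathsf{x}-\mathsf{T}\mathsf{y})$, a combination of $\mathsf{x}-\mathsf{y}$ and $\mathsf{T}\mathsf{x}-\mathsf{T}\mathsf{y}$ whose coefficients sum to $1$. Applying the quadratic expansion $\|(1-\lambda)u+\lambda v\|^2=(1-\lambda)\|u\|^2+\lambda\|v\|^2-\lambda(1-\lambda)\|u-v\|^2$ with $\lambda=\alpha^{-1}$, $u=\mathsf{x}-\mathsf{y}$, and $v=\mathsf{T}\mathsf{x}-\mathsf{T}\mathsf{y}$, and noting $u-v=(\Id-\mathsf{T})\mathsf{x}-(\Id-\mathsf{T})\mathsf{y}$, yields
\begin{equation*}
\|\mathsf{R}\mathsf{x}-\mathsf{R}\mathsf{y}\|^2
=\Big(1-\frac1\alpha\Big)\|\mathsf{x}-\mathsf{y}\|^2
+\frac1\alpha\|\mathsf{T}\mathsf{x}-\mathsf{T}\mathsf{y}\|^2
-\frac1\alpha\Big(1-\frac1\alpha\Big)
\|(\Id-\mathsf{T})\mathsf{x}-(\Id-\mathsf{T})\mathsf{y}\|^2.
\end{equation*}

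The last step is to insert this identity into the nonexpansiveness inequality for $\mathsf{R}$ and simplify. Subtracting $(1-\alpha^{-1})\|\mathsf{x}-\mathsf{y}\|^2$ from both sides, multiplying through by $\alpha>0$, and using $-(1-\alpha^{-1})=(1-\alpha)/\alpha$, the inequality $\|\mathsf{R}\mathsf{x}-\mathsf{R}\mathsf{y}\|^2\le\|\mathsf{x}-\mathsf{y}\|^2$ is seen to be equivalent to the claimed estimate. Because every manipulation here is an equivalence---the expansion is an exact equality and multiplication by the positive constant $\alpha$ is reversible---this establishes both implications simultaneously, quantified over all $\mathsf{x},\mathsf{y}$. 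The only point meriting care is that $\lambda=\alpha^{-1}>1$ falls outside $[0,1]$, so the expansion must be invoked as the genuine polynomial identity valid for every real coefficient rather than the convex-combination form \cite[Corollary~2.14]{Livre1} used in the proof of Theorem~\ref{t:Lkj78G5Rc12}; verifying that identity is an elementary bilinear computation and is the only real content of the argument.
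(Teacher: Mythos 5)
Your argument is correct and complete: the reduction to nonexpansiveness of the uniquely determined $\mathsf{R}=(1-\alpha^{-1})\Id+\alpha^{-1}\mathsf{T}$, the exact quadratic expansion (valid for all real coefficients, as you rightly note since $\alpha^{-1}>1$), and the reversible algebraic simplification together give both implications. The paper itself offers no proof, quoting the result from \cite[Proposition~4.25]{Livre1}, and your derivation is essentially the standard one found there, so nothing further is needed.
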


\begin{corollary}
\label{cgb18T3-03}
Let $\chi\in\zeroun$, let $(\alpha_n)_{n\in\NN}$ be a sequence in 
$\zeroun$, and set 
$\mathsf{D}=\{0,1\}^m\smallsetminus\{\boldsymbol{\mathsf{0}}\}$. 
For every $n\in\NN$, let $\lambda_n\in\left[\chi/\alpha_n,%
(1-\chi)/\alpha_n\right]$ and let
$\boldsymbol{\mathsf T}_{\!n}\colon\HHH\to\HHH\colon
\boldsymbol{\mathsf x}\mapsto({\mathsf T}_{\!{i,n}}\,
\boldsymbol{\mathsf x})_{1\leq i\leq m}$ be an $\alpha_n$-averaged
operator, where, for every $i\in\{1,\ldots,m\}$, 
${\mathsf T}_{\!i,n}\colon\HHH\to\HH_i$.
Furthermore, let $\boldsymbol{x}_0$ and 
$(\boldsymbol{a}_n)_{n\in\NN}$ 
be $\HHH$-valued random variables, and let 
$(\boldsymbol{\varepsilon}_n)_{n\in\NN}$ be identically 
distributed $\mathsf{D}$-valued random variables. Iterate
\begin{equation}
\label{egb18T3-02}
\begin{array}{l}
\text{for}\;n=0,1,\ldots\\
\left\lfloor
\begin{array}{l}
\text{for}\;i=1,\ldots,m\\
\left\lfloor
\begin{array}{l}
x_{i,n+1}= 
x_{i,n}+\varepsilon_{i,n}\lambda_n\big({\mathsf T}_{\!{i,n}}\,
(x_{1,n},\ldots,x_{m,n})+a_{i,n}-x_{i,n}\big),
\end{array} 
\right.
\end{array} 
\right.
\end{array} 
\end{equation}
and set $(\forall n\in\NN)$ 
$\EEE_n=\sigma(\boldsymbol{\varepsilon}_n)$.
Furthermore, assume that there exists $\widehat{\Omega}\in\FF$ 
such that $\PP(\widehat{\Omega})=1$ and the following hold:
\begin{enumerate}
\item
\label{cgb18T3-03i}
$\boldsymbol{\mathsf F}=\bigcap_{n\in\NN}\Fix
\boldsymbol{\mathsf T}_{\!n}\neq\emp$.
\item
\label{cgb18T3-03ii}
$\sum_{n\in\NN}\alpha_n^{-1}\sqrt{\EC{\|\boldsymbol{a}_n\|^2}
{\XXX_n}}<\pinf$.
\item
\label{cgb18T3-03iv-}
For every $n\in\NN$, $\EEE_n$ and $\XXX_n$ are independent.
\item
\label{cgb18T3-03iv}
$(\forall i\in\{1,\ldots,m\})$ $\PP[\varepsilon_{i,0}=1]>0$.
\item
\label{cgb18T3-03iii}
$(\forall\omega\in\widehat{\Omega})$
$\Big[\alpha_n^{-1}\big(\boldsymbol{\mathsf T}_{\!n}
\boldsymbol{x}_{n}
(\omega)-\boldsymbol{x}_{n}(\omega)\big)\to\boldsymbol{0}$
$\Rightarrow$ $\WC(\boldsymbol{x}_n(\omega))_{n\in\NN}
\subset\boldsymbol{\mathsf{F}}\Big]$.
\end{enumerate}
Then $(\boldsymbol{x}_n)_{n\in\NN}$ converges weakly
$\as$ to an $\boldsymbol{\mathsf{F}}$-valued random variable
$\boldsymbol{x}$. If, in addition, 
\begin{enumerate}
\setcounter{enumi}{5}
\item
$(\forall\omega\in\widehat{\Omega})$
$\Big[\,\big[\,\sup_{n\in\NN}\|\boldsymbol{x}_n(\omega)\|<\pinf$ 
and 
$\alpha_n^{-1}\big(\boldsymbol{\mathsf T}_{\!n}
\boldsymbol{x}_n(\omega)-\boldsymbol{x}_n(\omega)\big)\to
\boldsymbol{0}\,\big]$ $\Rightarrow$
$\SC(\boldsymbol{x}_n(\omega))_{n\in\NN}\neq\emp\,\Big]$,
\end{enumerate}
then $(\boldsymbol{x}_n)_{n\in\NN}$ converges strongly
$\as$ to $\boldsymbol{x}$.
\end{corollary}
\begin{proof}
Set $(\forall n\in\NN)$ 
$\boldsymbol{\mathsf{R}}_n=(1-\alpha_n^{-1})\ID+
\alpha_n^{-1} \boldsymbol{\mathsf{T}}_n$ and 
$(\forall i\in\{1,\ldots,m\})$ 
$\mathsf{R}_{i,n}=(1-\alpha_n^{-1})\Id+
\alpha_n^{-1}\mathsf{T}_{\!i,n}$.
Moreover, set $(\forall n\in\NN)$ $\mu_n=\alpha_n\lambda_n$ and 
$\boldsymbol{b}_{n}=\alpha_n^{-1}\boldsymbol{a}_{n}$. Then 
$(\forall n\in\NN)$ 
$\Fix\boldsymbol{\mathsf{R}}_n=\Fix\boldsymbol{\mathsf{T}}_{\!n}$ 
and $\boldsymbol{\mathsf{R}}_n$ is nonexpansive. In addition, we 
derive from \eqref{egb18T3-02} that
\begin{equation}
\label{egb18T2-28}
(\forall n\in\NN)(\forall i\in\{1,\ldots,m\})\quad
x_{i,n+1}=x_{i,n}+\varepsilon_{i,n}\mu_n\big({\mathsf R}_{i,n}\,
\boldsymbol{x}_n+b_{i,n}-x_{i,n}\big).
\end{equation}
Since $(\mu_n)_{n\in\NN}$ lies in $[\chi,1-\chi]$ and
$\sum_{n\in\NN}\sqrt{\EC{\|\boldsymbol{b}_n\|^2}{\XXX_n}}=
\sum_{n\in\NN}\alpha_n^{-1}\sqrt{\EC{\|\boldsymbol{a}_n\|^2}{\XXX_n}}
<\pinf$, the result follows from Theorem~\ref{tgb18T2-12} and
Remark~\ref{r:match}\ref{r:matchiii}.
\end{proof}

\begin{remark}
\label{rgb18T3-17}
In the special case of a single-block (i.e., $m=1$) and of 
deterministic errors, Corollary~\ref{cgb18T3-03} reduces to a 
scenario found in \cite[Theorem~4.2]{Opti04}. 
\end{remark}

\section{Double-layer random block-coordinate fixed point 
algorithms}
\label{sec:4}

The algorithm analyzed in this section comprises two successive 
applications of nonexpansive operators at each iteration.
We recall that Notation~\ref{n:2} is in force.

\begin{theorem}
\label{t:1}
Let $(\alpha_n)_{n\in\NN}$ and $(\beta_n)_{n\in\NN}$ be sequences 
in $\zeroun$ such that $\sup_{n\in\NN}\alpha_n<1$ and 
$\sup_{n\in\NN}\beta_n<1$, let $(\lambda_n)_{n\in\NN}$ be a 
sequence in $\left]0,1\right]$ such that
$\inf_{n\in\NN}\lambda_n>0$, and set 
$\mathsf{D}=\{0,1\}^m\smallsetminus\{\boldsymbol{\mathsf{0}}\}$.
Let $\boldsymbol{x}_0$, $(\boldsymbol{a}_n)_{n\in\NN}$, 
and $(\boldsymbol{b}_n)_{n\in\NN}$ be $\HHH$-valued random 
variables, and let $(\boldsymbol{\varepsilon}_n)_{n\in\NN}$ be 
identically distributed $\mathsf{D}$-valued random variables. For 
every $n\in\NN$, let $\boldsymbol{\mathsf R}_n\colon\HHH\to\HHH$ 
be $\beta_n$-averaged and let
$\boldsymbol{\mathsf T}_{\!n}\colon\HHH\to\HHH
\colon\boldsymbol{\mathsf x}\mapsto({\mathsf T}_{\!i,n}
\boldsymbol{\mathsf x})_{1\leq i\leq m}$ be $\alpha_n$-averaged, 
where, $(\forall i\in\{1,\ldots,m\})$ 
${\mathsf T}_{\!i,n}\colon\HHH\to\HH_i$. Iterate
\begin{equation}
\label{e:main1}
\begin{array}{l}
\text{for}\;n=0,1,\ldots\\
\left\lfloor
\begin{array}{l}
\boldsymbol{y}_n=\boldsymbol{\mathsf R}_n\boldsymbol{x}_n+
\boldsymbol{b}_n\\
\text{for}\;i=1,\ldots,m\\
\left\lfloor
\begin{array}{l}
x_{i,n+1}=x_{i,n}+\varepsilon_{i,n}\lambda_n\big({\mathsf T}_{\!i,n}
\boldsymbol{y}_n+a_{i,n}-x_{i,n}\big),
\end{array} 
\right.
\end{array} 
\right.
\end{array} 
\end{equation}
and set $(\forall n\in\NN)$
$\EEE_n=\sigma(\boldsymbol{\varepsilon}_n)$.
In addition, assume that the following hold:
\begin{enumerate}
\item
\label{t:1i}
$\boldsymbol{\mathsf{F}}=\bigcap_{n\in\NN}
\Fix(\boldsymbol{\mathsf T}_{\!n}\circ
\boldsymbol{\mathsf R}_n)\neq\emp$.
\item
\label{t:1ii}
$\sum_{n\in\NN}\sqrt{\EC{\|\boldsymbol{a}_n\|^2}
{\boldsymbol{\XX}_n}}<\pinf$ and
$\sum_{n\in\NN}\sqrt{\EC{\|\boldsymbol{b}_n\|^2}
{\boldsymbol{\XX}_n}}<\pinf$.
\item
\label{t:1iv-}
For every $n\in\NN$, $\EEE_n$ and $\XXX_n$ are independent.
\item
\label{t:1iv}
$(\forall i\in\{1,\ldots,m\})$ 
$\mathsf{p}_i=\PP[\varepsilon_{i,0}=1]>0$.
\end{enumerate}
Then
\begin{equation}
\label{egb18T4-23a}
\big[~(\forall\boldsymbol{\mathsf{z}}\in\boldsymbol{\mathsf{F}})
\;\:\boldsymbol{\mathsf T}_n(\boldsymbol{\mathsf{R}}_n
\boldsymbol{x}_n)-\boldsymbol{\mathsf{R}}_n\boldsymbol{x}_n
+\boldsymbol{\mathsf R}_n\boldsymbol{\mathsf{z}}
\to\boldsymbol{\mathsf{z}}~\big]\;\as
\end{equation}
and
\begin{equation}
\label{egb18T4-23b}
\big[~(\forall\boldsymbol{\mathsf{z}}\in\boldsymbol{\mathsf{F}})
\;\:\boldsymbol{x}_n-\boldsymbol{\mathsf{R}}_n
\boldsymbol{x}_n+\boldsymbol{\mathsf R}_n
\boldsymbol{\mathsf{z}}\to\boldsymbol{\mathsf{z}}~\big]\;\as
\end{equation}
Furthermore, suppose that:
\begin{enumerate}
\setcounter{enumi}{4}
\item
\label{t:1iii}
$\WC(\boldsymbol{x}_n)_{n\in\NN}
\subset\boldsymbol{\mathsf{F}}\;\:\as$
\end{enumerate}
Then $(\boldsymbol{x}_n)_{n\in\NN}$ converges weakly
$\as$ to an $\boldsymbol{\mathsf{F}}$-valued random variable 
$\boldsymbol{x}$. If, in addition, 
\begin{enumerate}
\setcounter{enumi}{5}
\item
\label{t:1x}
$\SC(\boldsymbol{x}_n)_{n\in\NN}\neq\emp\;\:\as$,
\end{enumerate}
then $(\boldsymbol{x}_n)_{n\in\NN}$ converges strongly $\as$ to
$\boldsymbol{x}$.
\end{theorem}
\begin{proof}
Let us prove that the result is an application of 
Theorem~\ref{tHj7+1-12} in the renormed Hilbert space 
$(\HHH,|||\cdot|||)$, where $|||\cdot|||$ is defined in 
\eqref{egb18T2-14}. Note that
\begin{equation}
\label{e:normequiv}
(\forall\boldsymbol{\mathsf{x}}\in\HHH)\quad
\|\boldsymbol{\mathsf{x}}\|^2\leq |||\boldsymbol{\mathsf{x}}|||^2
\leq\frac{1}{\displaystyle{\min_{1\leq i\leq m}}\mathsf{p}_i}
\|\boldsymbol{\mathsf{x}}\|^2
\end{equation}
and that, since the operators $(\boldsymbol{\mathsf{R}}_n\circ
\boldsymbol{\mathsf{T}}_n)_{n\in\NN}$ are nonexpansive, the sets
$(\Fix(\boldsymbol{\mathsf T}_{\!n}\circ
\boldsymbol{\mathsf R}_n))_{n\in\NN}$ are closed 
\cite[Corollary~4.15]{Livre1}, and so is 
$\boldsymbol{\mathsf{F}}$.
Next, for every $n\in\NN$, set 
$\boldsymbol{r}_n=\boldsymbol{\mathsf R}_n\boldsymbol{x}_n$, and 
define $\boldsymbol{t}_n$, $\boldsymbol{c}_n$, $\boldsymbol{d}_n$, 
and $\boldsymbol{e}_n$ coordinate-wise by
\begin{align}
\label{eHj7+2-29a}
(\forall i\in\{1,\ldots,m\})\quad
\begin{cases}
t_{i,n}=x_{i,n}+
\varepsilon_{i,n} ({\mathsf T}_{\!i,n}\boldsymbol{r}_n -x_{i,n})\\
c_{i,n}=\varepsilon_{i,n} a_{i,n} \\
d_{i,n}=\varepsilon_{i,n}({\mathsf T}_{\!i,n}\boldsymbol{y}_n 
-{\mathsf T}_{\!i,n}\boldsymbol{r}_n)
\end{cases}
\text{and}\quad e_{i,n}=c_{i,n}+d_{i,n}.
\end{align}
Then \eqref{e:main1} implies that
\begin{equation}
\label{eHj7+2-28a}
(\forall n\in\NN)\quad
\boldsymbol{x}_{n+1}=\boldsymbol{x}_n+\lambda_n\big(\boldsymbol{t}_n
+\boldsymbol{e}_n-\boldsymbol{x}_n\big).
\end{equation}
On the other hand, we derive from \eqref{eHj7+2-29a} that
\begin{align}\label{e:enanbn}
(\forall n\in\NN)\quad
\sqrt{\EC{|||\boldsymbol{e}_n|||^2}{\boldsymbol{\XX}_n}}
&\leq\sqrt{\EC{|||\boldsymbol{c}_n|||^2}{\boldsymbol{\XX}_n}}+
\sqrt{\EC{|||\boldsymbol{d}_n|||^2}{\boldsymbol{\XX}_n}}\nonumber\\
&\leq\sqrt{\EC{|||\boldsymbol{a}_n|||^2}{\boldsymbol{\XX}_n}}+
\sqrt{\EC{|||\boldsymbol{d}_n|||^2}{\boldsymbol{\XX}_n}}.
\end{align}
However, it follows from \eqref{eHj7+2-29a},
\eqref{e:normequiv}, and the 
nonexpansiveness of the operators
$(\boldsymbol{\mathsf T}_n)_{n\in\NN}$ that
\begin{align}
(\forall n\in\NN)\quad
\EC{|||\boldsymbol{d}_n|||^2}{\boldsymbol{\XX}_n}
&\leq\frac{1}{\displaystyle{\min_{1\leq i\leq m}}\mathsf{p}_i}
\EEC{\sum_{i=1}^m\|\varepsilon_{i,n}({\mathsf T}_{\!i,n}
\boldsymbol{y}_n-{\mathsf T}_{\!i,n}\boldsymbol{r}_n)\|^2}
{\boldsymbol{\XX}_n}\nonumber\\
&\leq\frac{1}{\displaystyle{\min_{1\leq i\leq m}}\mathsf{p}_i}
\EC{\|\boldsymbol{\mathsf{T}}_{\!n}\boldsymbol{y}_n 
-\boldsymbol{\mathsf{T}}_{\!n}\boldsymbol{r}_n\|^2}
{\boldsymbol{\XX}_n}\nonumber\\
&\leq\frac{1}{\displaystyle{\min_{1\leq i\leq m}}\mathsf{p}_i}
\EC{\|\boldsymbol{y}_n-\boldsymbol{r}_n\|^2}{\boldsymbol{\XX}_n}
\nonumber\\
&=\frac{1}{\displaystyle{\min_{1\leq i\leq m}}\mathsf{p}_i}
\EC{\|\boldsymbol{b}_n\|^2}{\boldsymbol{\XX}_n}.
\end{align}
Consequently \eqref{e:normequiv}, \eqref{e:enanbn}, and 
\ref{t:1ii} yield
\begin{align}
\sum_{n\in\NN}\lambda_n\sqrt{\EC{|||\boldsymbol{e}_n|||^2}
{\boldsymbol{\XX}_n}}
&\leq\frac{1}{\displaystyle{\min_{1\leq i\leq m}}
\sqrt{\mathsf{p}_i}}\,
\bigg(\sum_{n\in\NN}\sqrt{\EC{\|\boldsymbol{a}_n\|^2}{
\boldsymbol{\XX}_n}}
+\sum_{n\in\NN}\sqrt{\EC{\|\boldsymbol{b}_n\|^2}{
\boldsymbol{\XX}_n}}\bigg)\nonumber\\
&<\pinf.
\end{align}
Now let $\boldsymbol{\mathsf{z}}\in\boldsymbol{\mathsf{F}}$, and 
set 
\begin{equation}
\label{egb18T4-14a}
(\forall n\in\NN)(\forall i\in\{1,\ldots,m\})\quad
\mathsf{q}_{i,n}\colon\HHH\times\mathsf{D}\to\RR\colon
(\boldsymbol{\mathsf{x}},\boldsymbol{\epsilon})\mapsto
\|\mathsf{x}_i-\mathsf{z}_i+\epsilon_i({\mathsf T}_{\!i,n}
(\boldsymbol{\mathsf{R}}_n\boldsymbol{\mathsf{x}})-
\mathsf{x}_i)\|^2.
\end{equation}
Observe that, for every $n\in\NN$ and every $i\in\{1,\ldots,m\}$,
by continuity of $\boldsymbol{\mathsf{R}}_n$ and 
${\mathsf T}_{\!i,n}$,
${\mathsf T}_{\!i,n}\circ\boldsymbol{\mathsf{R}}_n$ is measurable, 
and the functions $(\mathsf{q}_{i,n}(\cdot,\boldsymbol{\epsilon}))%
_{\boldsymbol{\epsilon}\in\mathsf{D}}$ are therefore likewise. 
Consequently, using \ref{t:1iv} and arguing as in 
\eqref{egb18T4-15c} leads to
\begin{align}
\label{egb18T4-14d}
\hskip -4mm
&(\forall n\in\NN)(\forall i\in\{1,\ldots,m\})\quad
\EC{\|t_{i,n}-\mathsf{z}_{i}\|^2}{\XXX_n}\nonumber\\
&\hskip 62mm =\sum_{\boldsymbol{\epsilon}\in\mathsf{D}}
\EC{\mathsf{q}_{i,n}(\boldsymbol{x}_n,\boldsymbol{\epsilon})
1_{[\boldsymbol{\varepsilon}_n=\boldsymbol{\epsilon}]}}{\XXX_n}
\nonumber\\
&\hskip 62mm =\sum_{\boldsymbol{\epsilon}\in\mathsf{D}}
\PP[\boldsymbol{\varepsilon}_n=\boldsymbol{\epsilon}]
\|x_{i,n}-\mathsf{z}_i+\epsilon_i({\mathsf T}_{\!i,n}
\boldsymbol{r}_n-x_{i,n})\|^2.
\end{align}
Hence, recalling \eqref{egb18T2-14} and \ref{t:1iv}, we obtain
\begin{align}
\label{egb18T4-14k}
&\hskip -4mm
(\forall n\in\NN)\quad
\EC{|||\boldsymbol{t}_n-\boldsymbol{\mathsf{z}}|||^2}
{\boldsymbol{\XX}_n}\nonumber\\
&=\sum_{i=1}^m\frac{1}{\mathsf{p}_i}
\EC{\|t_{i,n}-\mathsf{z}_{i}\|^2}{\XXX_n}\nonumber\\
&=\sum_{i=1}^m\frac{1}{\mathsf{p}_i}
\sum_{\boldsymbol{\epsilon}\in\mathsf{D}}
\PP[\boldsymbol{\varepsilon}_n=\boldsymbol{\epsilon}]
\|x_{i,n}-\mathsf{z}_i+\epsilon_i({\mathsf T}_{\!i,n}
\boldsymbol{r}_n-x_{i,n})\|^2
\nonumber\\
&=\sum_{i=1}^m\frac{1}{\mathsf{p}_i}\left(
\sum_{\boldsymbol{\epsilon}\in\mathsf{D},\epsilon_i=1}
\PP[\boldsymbol{\varepsilon}_n=\boldsymbol{\epsilon}]\,
\|\mathsf{T}_{\!i,n}\boldsymbol{r}_n-\mathsf{z}_{i}\|^2+
\sum_{\boldsymbol{\epsilon}\in\mathsf{D},\epsilon_i=0}
\PP[\boldsymbol{\varepsilon}_n=\boldsymbol{\epsilon}]\,
\|x_{i,n}-\mathsf{z}_{i}\|^2\right)\nonumber\\
&=\|\boldsymbol{\mathsf{T}}_{\!n}\boldsymbol{r}_n-
\boldsymbol{\mathsf{z}}\|^2+
\sum_{i=1}^m \frac{1-\mathsf{p}_i}{\mathsf{p}_i} 
\|x_{i,n}-\mathsf{z}_{i}\|^2\nonumber\\
&=|||\boldsymbol{x}_n-\boldsymbol{\mathsf z}|||^2+
\|\boldsymbol{\mathsf T}_{\!n}\boldsymbol{r}_n-
\boldsymbol{\mathsf z}\|^2
-\|\boldsymbol{x}_n-\boldsymbol{\mathsf z}\|^2.
\end{align}
However, we deduce from \ref{t:1i} and Proposition~\ref{p:av1} that
\begin{equation}
\label{e:avril2014}
(\forall n\in\NN)\quad
\|\boldsymbol{\mathsf T}_{\!n} 
\boldsymbol{r}_n-\boldsymbol{\mathsf z}\|^2 +
\frac{1-\alpha_n}{\alpha_n}
\|\boldsymbol{r}_n-\boldsymbol{\mathsf T}_{\!n} 
\boldsymbol{r}_n-\boldsymbol{\mathsf R}_n\boldsymbol{\mathsf z}
+\boldsymbol{\mathsf z}\|^2
\leq\|\boldsymbol{r}_n-\boldsymbol{\mathsf R}_n
\boldsymbol{\mathsf z}\|^2.
\end{equation}
Combining \eqref{egb18T4-14k} with \eqref{e:avril2014} yields
\begin{multline}
\label{e:2013-2014}
(\forall n\in\NN)\quad
\EC{|||\boldsymbol{t}_n-\boldsymbol{\mathsf z}|||^2}
{\boldsymbol{\XX}_n}+\frac{1-\alpha_n}{\alpha_n} 
\|\boldsymbol{r}_n-\boldsymbol{\mathsf T}_n \boldsymbol{r}_n-
\boldsymbol{\mathsf R}_n\boldsymbol{\mathsf z}
+\boldsymbol{\mathsf z}\|^2\\
\leq\, |||\boldsymbol{x}_n-\boldsymbol{\mathsf z}|||^2
+\|\boldsymbol{\mathsf R}_n
\boldsymbol{x}_n-\boldsymbol{\mathsf R}_n\boldsymbol{\mathsf z}\|^2
-\|\boldsymbol{x}_n-\boldsymbol{\mathsf z}\|^2.
\end{multline}
Now set $\chi=\text{min}\{1/\text{sup}_{k\in\NN}\alpha_k,
1/\text{sup}_{k\in\NN}\beta_k\}-1$. Then $\chi\in\RPP$ and since,
for every $n\in\NN$, $\boldsymbol{\mathsf R}_n$ is 
$\beta_n$-averaged, Proposition~\ref{p:av1} and 
\eqref{e:2013-2014} yield
\begin{equation}
(\forall n\in\NN)\quad
\EC{|||\boldsymbol{t}_n-\boldsymbol{\mathsf z}|||^2}
{\boldsymbol{\XX}_n}+\theta_n(\boldsymbol{\mathsf z})\leq 
|||\boldsymbol{x}_n-\boldsymbol{\mathsf z}|||^2,
\end{equation}
where
\begin{align}
(\forall n\in\NN)\quad
\theta_n(\boldsymbol{\mathsf z})
&=\chi\big(
\|\boldsymbol{r}_n-\boldsymbol{\mathsf T}_{\!n} \boldsymbol{r}_n-
\boldsymbol{\mathsf R}_n\boldsymbol{\mathsf z}+
\boldsymbol{\mathsf z}\|^2+
\|\boldsymbol{x}_n-\boldsymbol{r}_n-\boldsymbol{\mathsf z}
+\boldsymbol{\mathsf R}_n\boldsymbol{\mathsf z}\|^2\big)
\label{egb18T4-20k}\\
&\leq\frac{1-\alpha_n}{\alpha_n} 
\|\boldsymbol{r}_n-\boldsymbol{\mathsf T}_{\!n} \boldsymbol{r}_n-
\boldsymbol{\mathsf R}_n\boldsymbol{\mathsf z}+
\boldsymbol{\mathsf z}\|^2+\frac{1-\beta_n}{\beta_n} 
\|\boldsymbol{x}_n-\boldsymbol{r}_n-\boldsymbol{\mathsf z}
+\boldsymbol{\mathsf R}_n\boldsymbol{\mathsf z}\|^2.
\label{egb18T3-27}
\end{align}
We have thus shown that properties 
\ref{tHj7+1-12ii}--\ref{tHj7+1-12iii} of 
Theorem~\ref{tHj7+1-12} hold with $(\forall n\in\NN)$
$\mu_n=\nu_n=0$. 
Next, let $\boldsymbol{\mathsf{Z}}$ be a countable set which is 
dense in $\boldsymbol{\mathsf{F}}$. Then 
\eqref{egb18T4-19a} asserts that
\begin{equation}
\label{egb18T4-16a}
(\forall\boldsymbol{\mathsf{z}}\in\boldsymbol{\mathsf{Z}})
(\exi\Omega_{\boldsymbol{\mathsf{z}}}\in\FF)\quad
\PP(\Omega_{\boldsymbol{\mathsf{z}}})=1\quad\text{and}\quad
(\forall\omega\in\Omega_{\boldsymbol{\mathsf{z}}})\quad
\sum_{n\in\NN}\lambda_n\theta_n(\boldsymbol{\mathsf{z}},\omega)
<\pinf.
\end{equation}
Moreover, the event 
$\widetilde{\Omega}=\bigcap_{\boldsymbol{\mathsf{z}}\in\mathsf{Z}}
\Omega_{\boldsymbol{\mathsf{z}}}$ is almost certain, i.e., 
$\PP(\widetilde{\Omega})=1$.
Now fix $\boldsymbol{\mathsf{z}}\in\boldsymbol{\mathsf{F}}$.
By density, we can extract from
$\boldsymbol{\mathsf{Z}}$ a sequence 
$(\boldsymbol{\mathsf{z}}_k)_{k\in\NN}$ such that 
$\boldsymbol{\mathsf{z}}_{k}\to\boldsymbol{\mathsf{z}}$. 
In turn, since $\inf_{n\in\NN}\lambda_n>0$, we derive from 
\eqref{egb18T4-20k} and \eqref{egb18T4-16a} that
\begin{equation}
\label{egb18T4-23x}
(\forall k\in\NN)(\forall\omega\in\widetilde{\Omega})\quad
\begin{cases}
\boldsymbol{r}_n(\omega)-\boldsymbol{\mathsf T}_{\!n}
\boldsymbol{r}_n(\omega)-
\boldsymbol{\mathsf R}_n\boldsymbol{\mathsf z}_k+
\boldsymbol{\mathsf z}_k\to\boldsymbol{0}\\
\boldsymbol{x}_n(\omega)-\boldsymbol{r}_n(\omega)-
\boldsymbol{\mathsf z}_k+\boldsymbol{\mathsf R}_n
\boldsymbol{\mathsf z}_k\to\boldsymbol{0}.
\end{cases}
\end{equation}
Now set $\zeta=\sup_{n\in\NN}\sqrt{\beta_n/(1-\beta_n)}$, and 
$(\forall n\in\NN)$
$\boldsymbol{\mathsf{S}}_n=\ID-\boldsymbol{\mathsf{R}}_n$ and 
$\boldsymbol{p}_n=\boldsymbol{r}_n-\boldsymbol{\mathsf T}_{\!n}
\boldsymbol{r}_n$. Then it follows from Proposition~\ref{p:av1} 
that the operators $(\boldsymbol{\mathsf{S}}_n)_{n\in\NN}$ are 
$\zeta$-Lipschitzian. Consequently
\begin{multline}
\label{egb18T4-23o}
(\forall k\in\NN)(\forall n\in\NN)
(\forall\omega\in\widetilde{\Omega})\quad
-\zeta\|\boldsymbol{\mathsf{z}}_k-\boldsymbol{\mathsf{z}}\|
\leq-\|\boldsymbol{\mathsf{S}}_n\boldsymbol{\mathsf{z}}_k-
\boldsymbol{\mathsf{S}}_n\boldsymbol{\mathsf{z}}\|\\
\leq\|\boldsymbol{p}_n(\omega)+
\boldsymbol{\mathsf{S}}_n\boldsymbol{\mathsf{z}}\|-
\|\boldsymbol{p}_n(\omega)+\boldsymbol{\mathsf{S}}_n
\boldsymbol{\mathsf{z}}_k\|
\leq\|\boldsymbol{\mathsf{S}}_n\boldsymbol{\mathsf{z}}_k-
\boldsymbol{\mathsf{S}}_n\boldsymbol{\mathsf{z}}\|
\leq\zeta\|\boldsymbol{\mathsf{z}}_k-\boldsymbol{\mathsf{z}}\|
\end{multline}
and, therefore, \eqref{egb18T4-23x} yields
\begin{align}
\label{egb18T4-23p}
(\forall k\in\NN)\quad
-\zeta\|\boldsymbol{\mathsf{z}}_k-\boldsymbol{\mathsf{z}}\|
&\leq\varliminf_{n\to\pinf}\|\boldsymbol{p}_n(\omega)+
\boldsymbol{\mathsf{S}}_n\boldsymbol{\mathsf{z}}\|-\lim_{n\to\pinf}
\|\boldsymbol{p}_n(\omega)+\boldsymbol{\mathsf{S}}_n
\boldsymbol{\mathsf{z}}_k\|\nonumber\\
&=\varliminf_{n\to\pinf}\|\boldsymbol{p}_n(\omega)+
\boldsymbol{\mathsf{S}}_n\boldsymbol{\mathsf{z}}\|\nonumber\\
&\leq\varlimsup_{n\to\pinf}\|\boldsymbol{p}_n(\omega)+
\boldsymbol{\mathsf{S}}_n\boldsymbol{\mathsf{z}}\|\nonumber\\
&\leq\varlimsup_{n\to\pinf}\|\boldsymbol{p}_n(\omega)+
\boldsymbol{\mathsf{S}}_n\boldsymbol{\mathsf{z}}\|-\lim_{n\to\pinf}
\|\boldsymbol{p}_n(\omega)+\boldsymbol{\mathsf{S}}_n
\boldsymbol{\mathsf{z}}_k\|\nonumber\\
&\leq\zeta\|\boldsymbol{\mathsf{z}}_k-\boldsymbol{\mathsf{z}}\|.
\end{align}
Since $\|\boldsymbol{\mathsf{z}}_k-\boldsymbol{\mathsf{z}}\|\to 0$
and $\PP(\widetilde{\Omega})=1$, we obtain $\boldsymbol{p}_n+
\boldsymbol{\mathsf{S}}_n\boldsymbol{\mathsf{z}}\to\boldsymbol{0}$
$\as$, which proves \eqref{egb18T4-23a}. Likewise, set
$(\forall n\in\NN)$
$\boldsymbol{q}_n=\boldsymbol{x}_n-\boldsymbol{r}_n$. Then, 
proceeding as in \eqref{egb18T4-23p}, \eqref{egb18T4-23x} yields 
$\boldsymbol{q}_n+\boldsymbol{\mathsf{S}}_n
\boldsymbol{\mathsf{z}}\to\boldsymbol{0}$, which
establishes \eqref{egb18T4-23b}. Finally, the weak and strong 
convergence claims follow from \ref{t:1iii}, \ref{t:1x}, and 
Theorem~\ref{tHj7+1-12}.
\end{proof}

\begin{remark}\
\label{rgb18T3-18}
\begin{enumerate}
\item
Consider the special case when only one-block is present ($m=1$)
and when the error sequences $(\boldsymbol{a}_n)_{n\in\NN}$ and 
$(\boldsymbol{b}_n)_{n\in\NN}$,
as well as $\boldsymbol{x}_0$, are deterministic. 
Then the setting of Theorem~\ref{t:1} is found in 
\cite[Theorem~6.3]{Opti04}.
Our framework therefore makes it possible to design
block-coordinate versions of the algorithms which comply with the
two-layer format of \cite[Theorem~6.3]{Opti04}, such as the 
forward-backward algorithm \cite{Opti04} or the algorithms of 
\cite{Luis13} and \cite{Ragu13}. Theorem~\ref{t:1} will be 
applied to block-coordinate forward-backward splitting in 
Section~\ref{sec:52}.
\item
Theorem~\ref{t:1}\ref{t:1iii} gives a condition for the \as\ weak
convergence of a sequence $(\boldsymbol{x}_n)_{n\in\NN}$ produced
by algorithm~\ref{e:main1} to a solution $\boldsymbol{x}$. 
In infinite-dimensional spaces, examples have been constructed for
which the convergence is only weak and not strong, i.e.,  
$(\|\boldsymbol{x}_n-\boldsymbol{x}\|)_{n\in\NN}$ does not 
converge to $0$ \as\ \cite{Smms05,Hund04}. Even if, as in 
Theorem~\ref{t:1}\ref{t:1x}, 
$(\|\boldsymbol{x}_n-\boldsymbol{x}\|)_{n\in\NN}$ does converge to
$0$ \as, there is in general no theoretical upper bound on the 
worst-case behavior of the rate of convergence, which can be
arbitrarily slow \cite{Baus09}. The latter behavior is also 
possible in Euclidean spaces \cite{Baus15,Youl87}.
\end{enumerate}
\end{remark}

\section{Applications to operator splitting}
\label{sec:5}

Let $\mathsf{A}\colon\HH\to 2^{\HH}$ be a set-valued operator and 
let $A^{-1}$ be its inverse, i.e., 
$(\forall(\mathsf{x},\mathsf{u})\in\HH^2)$ 
$\mathsf{x}\in\mathsf{A}^{-1}\mathsf{u}$ $\Leftrightarrow$
$\mathsf{u}\in\mathsf{A}\mathsf{x}$.
The resolvent of $\mathsf{A}$ is 
$\mathsf{J}_\mathsf{A}=(\Id+\mathsf{A})^{-1}$.
The domain of $\mathsf{A}$ is 
$\dom\mathsf{A}=\menge{\mathsf{x}\in\HH}{\mathsf{A}\mathsf{x}
\neq\emp}$ and the graph of $\mathsf{A}$ is 
$\gra\mathsf{A}=
\menge{(\mathsf{x},\mathsf{u})\in\HH\times\HH}{\mathsf{u}\in
\mathsf{A}\mathsf{x}}$.
If $\mathsf{A}$ is monotone, then $\mathsf{J}_\mathsf{A}$ is 
single-valued and nonexpansive and, furthermore, if $\mathsf{A}$ is
maximally monotone, then $\dom\mathsf{J}_\mathsf{A}=\HH$. 
We denote by $\Gamma_0(\HH)$ the class of lower semicontinuous 
convex functions $\mathsf{f}\colon\HH\to\RX$ such that 
$\mathsf{f}\not\equiv\pinf$.
The Moreau subdifferential of $\mathsf{f}\in\Gamma_0(\HH)$ is the 
maximally monotone operator
\begin{equation}
\label{e:subdiff}
\partial\mathsf{f}\colon\HH\to 2^{\HH}\colon\mathsf{x}\mapsto
\menge{\mathsf{u}\in\HH}{(\forall\mathsf{y}\in\HH)\;\;
\scal{\mathsf{y}-\mathsf{x}}{\mathsf{u}}+\mathsf{f}(\mathsf{x})
\leq\mathsf{f}(\mathsf{y})}.
\end{equation}
For every $\mathsf{x}\in\HH$, $\mathsf{f}+\|\mathsf{x}-\cdot\|^2/2$ 
has a unique minimizer, which is denoted by 
$\prox_\mathsf{f}\mathsf{x}$ \cite{Mor62b}. We have 
\begin{equation}
\label{e:prox2}
\prox_{\mathsf{f}}=\mathsf{J}_{\partial\mathsf{f}}.
\end{equation}
For background on convex analysis and monotone 
operator theory, see \cite{Livre1}. We continue to use the
standing Notation~\ref{n:2}.

\subsection{Random block-coordinate Douglas-Rachford splitting}
\label{sec:51}

We propose a random sweeping, block-coordinate version of the 
Douglas-Rachford algorithm with stochastic errors. The purpose 
of this algorithm is to construct iteratively a zero of the sum of 
two maximally monotone operators and it has found applications in 
numerous areas; see, e.g., \cite{Livre1,Bot13c,Luis12,Joca09,%
Banf11,Ecks92,Gaba83,Lion79,Papa14,Penn06,Pesq12}.

\begin{proposition}
\label{pgb18T4-08}
Set $\mathsf{D}=\{0,1\}^m\smallsetminus\{\boldsymbol{\mathsf{0}}\}$ 
and, for every $i\in\{1,\ldots,m\}$, let 
$\mathsf{A}_i\colon\HH_i\to 2^{\HH_i}$ be maximally monotone 
and let $\mathsf{B}_i\colon\HHH\to 2^{\HH_i}$. Suppose that 
$\boldsymbol{\mathsf{B}}\colon\HHH\to 2^{\HHH}\colon\boldsymbol
{\mathsf{x}}\mapsto\cart_{\!i=1}^{\!m}\mathsf{B}_i
\boldsymbol{\mathsf{x}}$ is maximally monotone and that the 
set $\boldsymbol{\mathsf{F}}$ of solutions to the problem
\begin{equation}
\label{egb18T4-08x}
\text{find}\;\;{\mathsf{x}_1\in\HH_1,\ldots,\mathsf{x}_m\in\HH_m}
\;\;\text{such that}\;\;(\forall i\in\{1,\ldots,m\})\quad 0\in
\mathsf{A}_i\mathsf{x}_i+\mathsf{B}_i(\mathsf{x}_1,\ldots,
\mathsf{x}_m)
\end{equation}
is nonempty. Set 
$\boldsymbol{\mathsf{B}}^{-1}\colon\boldsymbol{\mathsf{u}}\mapsto
\cart_{\!i=1}^{\!m}\mathsf{C}_i\boldsymbol{\mathsf{u}}$ where, for 
every $i\in \{1,\ldots,m\}$, $\mathsf{C}_i\colon \HH\to 2^{\HH_i}$.
We also consider the set $\boldsymbol{\mathsf{F}}^*$ of solutions to
the dual problem
\begin{equation}
\label{egb18T4-08y}
\text{find}\;\;{\mathsf{u}_1\in\HH_1,\ldots,\mathsf{u}_m\in\HH_m}
\;\;\text{such that}\;\;(\forall i\in\{1,\ldots,m\})\; 0\in
-\mathsf{A}_i^{-1}(-\mathsf{u}_i)+\mathsf{C}_i(\mathsf{u}_1,\ldots,
\mathsf{u}_m).
\end{equation}
Let $\gamma\in\RPP$, let $(\mu_n)_{n\in\NN}$ be a sequence in 
$\left]0,2\right[$ such that $\inf_{n\in\NN}\mu_n>0$ and 
$\sup_{n\in\NN}\mu_n<2$, let $\boldsymbol{x}_0$, 
$\boldsymbol{z}_0$, $(\boldsymbol{a}_n)_{n\in\NN}$, 
and $(\boldsymbol{b}_n)_{n\in\NN}$ be $\HHH$-valued random 
variables, and let $(\boldsymbol{\varepsilon}_n)_{n\in\NN}$ be 
identically distributed $\mathsf{D}$-valued random variables. Set
$\boldsymbol{\mathsf{J}}_{\gamma\boldsymbol{\mathsf{\mathsf{B}}}}
\colon\boldsymbol{\mathsf{x}}\mapsto
(\mathsf{Q}_i\boldsymbol{\mathsf{x}})_{1\leq i\leq m}$
where, for every $i\in\{1,\ldots,m\}$, 
$\mathsf{Q}_i\colon\HHH\to\HH_i$, iterate
\begin{equation}
\label{e:main15}
\begin{array}{l}
\text{for}\;n=0,1,\ldots\\
\left\lfloor
\begin{array}{l}
\text{for}\;i=1,\ldots,m\\
\left\lfloor
\begin{array}{l}
z_{i,n+1}=z_{i,n}+\varepsilon_{i,n}\big(\mathsf{Q}_i
(x_{1,n},\ldots,x_{m,n})+b_{i,n}-z_{i,n}\big)\\[1mm]
x_{i,n+1}=x_{i,n}+\varepsilon_{i,n}\mu_n
\big(\mathsf{J}_{\gamma\mathsf{A}_i}
(2z_{i,n+1}-x_{i,n})+a_{i,n}-z_{i,n+1}\big),
\end{array}
\right.
\end{array}
\right.\\
\end{array}
\end{equation}
and set $(\forall n\in\NN)$
$\EEE_n=\sigma(\boldsymbol{\varepsilon}_n)$.
Assume that the following hold:
\begin{enumerate}
\item
\label{pgb18T4-08ii}
$\sum_{n\in\NN}\sqrt{\EC{\|\boldsymbol{a}_n\|^2}{
\boldsymbol{\XX}_n}}<\pinf$ and
$\sum_{n\in\NN}\sqrt{\EC{\|\boldsymbol{b}_n\|^2}{
\boldsymbol{\XX}_n}}<\pinf$.
\item
\label{pgb18T4-08iv-}
For every $n\in\NN$, $\EEE_n$ and $\XXX_n$ 
are independent.
\item
\label{pgb18T4-08iv}
$(\forall i\in\{1,\ldots,m\})$ 
$\mathsf{p}_i=\PP[\varepsilon_{i,0}=1]>0$.
\end{enumerate}
Then $(\boldsymbol{x}_n)_{n\in\NN}$ converges weakly $\as$ to a
$\HHH$-valued random variable $\boldsymbol{x}$ such that 
$\boldsymbol{z}=\boldsymbol{\mathsf{J}}_{\gamma
\boldsymbol{\mathsf{B}}}\boldsymbol{x}$ is an 
$\boldsymbol{\mathsf F}$-valued random variable and 
$\boldsymbol{u}=\gamma^{-1}(\boldsymbol{x}-\boldsymbol{z})$ is an 
$\boldsymbol{\mathsf F}^*$-valued random variable.
Furthermore, suppose that:
\begin{enumerate}
\setcounter{enumi}{3}
\item
\label{pgb18T4-08iii}
$\mathsf{J}_{\gamma\boldsymbol{\mathsf{B}}}$ is weakly 
sequentially continuous and 
$\boldsymbol{b}_n\weakly\boldsymbol{0}\;\as$ 
\end{enumerate}
Then $\boldsymbol{z}_n\weakly\boldsymbol{z}\;\:\as$ and 
$\gamma^{-1}(\boldsymbol{x}_n-\boldsymbol{z}_n)\weakly
\boldsymbol{u}\;\:\as$
\end{proposition}
\begin{proof}
Set $\boldsymbol{\mathsf{A}}\colon\HHH\to 2^{\HHH}\colon
\boldsymbol{\mathsf{x}}\mapsto\cart_{\!i=1}^{\!m}
\mathsf{A}_i\mathsf{x}_i$ and $(\forall i\in\{1,\ldots,m\})$ 
$\mathsf{T}_{\!i}=(2\mathsf{J}_{\gamma\mathsf{A}_i}-\Id)\circ
(2\mathsf{Q}_i-\Id)$. 
Then $\boldsymbol{\mathsf{T}}=(2\boldsymbol{\mathsf{J}}_{\gamma%
\boldsymbol{\mathsf{A}}}-\ID)\circ
(2\boldsymbol{\mathsf{J}}_{\gamma\boldsymbol{\mathsf{B}}}-\ID)$ 
is nonexpansive as the composition of two nonexpansive operators
\cite[Corollary~23.10(ii)]{Livre1}. Furthermore
$\Fix\boldsymbol{\mathsf{T}}\neq\emp$ since
\cite[Lemma~2.6(iii)]{Opti04}
\begin{equation}
\label{egb18T4-27a}
\boldsymbol{\mathsf{J}}_{\gamma\boldsymbol{\mathsf{B}}}
(\Fix\boldsymbol{\mathsf{T}})=\zer(\boldsymbol{\mathsf{A}}+
\boldsymbol{\mathsf{B}})=\boldsymbol{\mathsf{F}}\neq\emp. 
\end{equation}
Now set
\begin{equation}
\label{egb18T4-08j}
(\forall n\in\NN)\quad\lambda_n=\mu_n/2\quad\text{and}\quad
\boldsymbol{e}_n=
2\big(\boldsymbol{\mathsf{J}}_{\gamma\boldsymbol{\mathsf{A}}}
(2\boldsymbol{\mathsf{J}}_{\gamma\boldsymbol{\mathsf{B}}}
\boldsymbol{x}_n+2\boldsymbol{b}_n-\boldsymbol{x}_n)-
\boldsymbol{\mathsf{J}}_{\gamma\boldsymbol{\mathsf{A}}}
(2\boldsymbol{\mathsf{J}}_{\gamma\boldsymbol{\mathsf{B}}}
\boldsymbol{x}_n-\boldsymbol{x}_n)+\boldsymbol{a}_n-
\boldsymbol{b}_n\big).
\end{equation}
Then we derive from \eqref{e:main15} that
\begin{align}
\label{egb18T4-08a}
(\forall n\in\NN)(\forall i\in\{1,\ldots,m\})\quad 
{x}_{i,n+1}
&={x}_{i,n}+{\varepsilon}_{i,n}\mu_n\big(
{\mathsf{J}}_{\gamma{\mathsf{A}_i}}\big(2\mathsf{Q}_i
\boldsymbol{x}_n+
2b_{i,n}-{x}_{i,n}\big)+a_{i,n}-z_{i,n+1}\big)\nonumber\\
&={x}_{i,n}+{\varepsilon}_{i,n}\lambda_n
\big(2{\mathsf{J}}_{\gamma{\mathsf{A}_i}}
\big(2\mathsf{Q}_i\boldsymbol{x}_n-x_{i,n}\big)
+e_{i,n}-2\mathsf{Q}_i\boldsymbol{x}_n\big)\nonumber\\
&=x_{i,n}+{\varepsilon}_{i,n}\lambda_n
\big(\mathsf{T}_{\!i}\boldsymbol{x}_n+e_{i,n}-x_{i,n}\big),
\end{align}
which is precisely the iteration process \eqref{egb18T3-05a}.
Furthermore, we infer from \eqref{egb18T4-08j} and the 
nonexpansiveness of 
$\boldsymbol{\mathsf{J}}_{\gamma\boldsymbol{\mathsf{A}}}$ 
\cite[Corollary~23.10(i)]{Livre1} that
\begin{align}
\label{egb18T4-09a}
(\forall n\in\NN)\quad
\|\boldsymbol{e}_n\|^2
&\leq 4\|\boldsymbol{\mathsf{J}}_{\gamma\boldsymbol{\mathsf{A}}}
(2\boldsymbol{\mathsf{J}}_{\gamma\boldsymbol{\mathsf{B}}}
\boldsymbol{x}_n+2\boldsymbol{b}_n-\boldsymbol{x}_n)-
\boldsymbol{\mathsf{J}}_{\gamma\boldsymbol{\mathsf{A}}}
(2\boldsymbol{\mathsf{J}}_{\gamma\boldsymbol{\mathsf{B}}}
\boldsymbol{x}_n-\boldsymbol{x}_n)+\boldsymbol{a}_n-
\boldsymbol{b}_n\|^2\nonumber\\
&\leq12\big(\|\boldsymbol{\mathsf{J}}_{\gamma\boldsymbol{\mathsf{A}}}
(2\boldsymbol{\mathsf{J}}_{\gamma\boldsymbol{\mathsf{B}}}
\boldsymbol{x}_n+2\boldsymbol{b}_n-\boldsymbol{x}_n)-
\boldsymbol{\mathsf{J}}_{\gamma\boldsymbol{\mathsf{A}}}
(2\boldsymbol{\mathsf{J}}_{\gamma\boldsymbol{\mathsf{B}}}
\boldsymbol{x}_n-\boldsymbol{x}_n)\|^2+\|\boldsymbol{a}_n\|^2
+\|\boldsymbol{b}_n\|^2\big)\nonumber\\
&\leq12\big(\|\boldsymbol{a}_n\|^2+5\|\boldsymbol{b}_n\|^2\big)
\end{align}
and therefore that 
\begin{equation}
(\forall n\in\NN)\quad
\sqrt{\EC{\|\boldsymbol{e}_n\|^2}{\XXX_n}}\leq2\sqrt{3}
\Big(\sqrt{\EC{\|\boldsymbol{a}_n\|^2}{\XXX_n}}+
\sqrt{5}\sqrt{\EC{\|\boldsymbol{b}_n\|^2}{\XXX_n}}\,\Big).
\end{equation}
Thus, we deduce from \ref{pgb18T4-08ii} that
$\sum_{n\in\NN}\sqrt{\EC{\|\boldsymbol{e}_n\|^2}{\XXX_n}}<\pinf$. 
Altogether, the almost sure weak convergence of 
$(\boldsymbol{x}_n)_{n\in\NN}$ to
a ($\Fix\boldsymbol{\mathsf{T}}$)-valued random variable
$\boldsymbol{x}$ follows 
from Corollary~\ref{cgb18T3-04}. In turn, \eqref{egb18T4-27a}
asserts that $\boldsymbol{z}=
\boldsymbol{\mathsf{J}}_{\gamma\boldsymbol{\mathsf{B}}}
\boldsymbol{x}\in\boldsymbol{\mathsf{F}}$ $\as$ 
Now set 
$\boldsymbol{{u}}=\gamma^{-1}(\boldsymbol{{x}}-
\boldsymbol{{z}})$. Then, $\as$,
\begin{equation}
\label{e:firth1}
\boldsymbol{z} =
\boldsymbol{\mathsf{J}}_{\gamma\boldsymbol{\mathsf{B}}}
\boldsymbol{x}\;\Leftrightarrow\;
\boldsymbol{x}-\boldsymbol{{z}} \in 
\gamma\boldsymbol{\mathsf{B}}\boldsymbol{{z}}
\;\Leftrightarrow\;\boldsymbol{{z}}\in
\boldsymbol{\mathsf{B}}^{-1}\boldsymbol{{u}}
\end{equation}
and
\begin{eqnarray}
\label{e:firth2}
\boldsymbol{x}\in\Fix\boldsymbol{\mathsf{T}} 
&\Leftrightarrow& 
\boldsymbol{{x}}=
(2\boldsymbol{\mathsf{J}}_{\gamma\boldsymbol{\mathsf{A}}}-\ID)
(2\boldsymbol{{z}}-\boldsymbol{x})
\nonumber\\
&\Leftrightarrow&
\boldsymbol{{z}}=
\boldsymbol{\mathsf{J}}_{\gamma\boldsymbol{\mathsf{A}}}
(2\boldsymbol{{z}}-\boldsymbol{{x}})
\nonumber\\
&\Leftrightarrow& \boldsymbol{{z}}-\boldsymbol{{x}}
\in\gamma\boldsymbol{\mathsf{A}}\boldsymbol{{z}}
\nonumber\\
&\Leftrightarrow&-\boldsymbol{{z}}\in-
\boldsymbol{\mathsf{A}}^{-1}(-\boldsymbol{{u}}).
\end{eqnarray}
These imply that 
$\boldsymbol{\mathsf{0}}\in -\boldsymbol{\mathsf{A}}^{-1}
(-\boldsymbol{u})+\boldsymbol{\mathsf{B}}^{-1}\boldsymbol{u}$
$\as$, i.e., that $\boldsymbol{u}\in\boldsymbol{\mathsf{F}}^*$ 
$\as$ 
Finally, assume that \ref{pgb18T4-08iii} holds. Then there exists 
$\widetilde{\Omega}\in\FF$ such that $\PP(\widetilde{\Omega})=1$ 
and $(\forall\omega\in\widetilde{\Omega})$ 
$\boldsymbol{\mathsf{J}}_{\gamma\boldsymbol{\mathsf{B}}}
\boldsymbol{x}_n(\omega)\weakly\boldsymbol{\mathsf{J}}_{\gamma
\boldsymbol{\mathsf{B}}}\boldsymbol{x}(\omega)=
\boldsymbol{z}(\omega)$. Now let
$i\in\{1,\ldots,m\}$, $\omega\in\widetilde{\Omega}$, and
$\boldsymbol{\mathsf{v}}\in\HHH$. Then 
$\scal{\mathsf{Q}_i\boldsymbol{x}_n(\omega)}
{\mathsf{v}_i}\to\scal{z_i(\omega)}{\mathsf{v}_i}$ and
\eqref{e:main15} yields
\begin{equation}
(\forall n\in\NN)\quad\scal{z_{i,n+1}(\omega)}{\mathsf{v}_i}=
\scal{z_{i,n}(\omega)}{\mathsf{v}_i}+\varepsilon_{i,n}(\omega)
\big(\scal{\mathsf{Q}_i\boldsymbol{x}_n(\omega)}
{\mathsf{v}_i}+\scal{b_{i,n}(\omega)}{\mathsf{v}_i}-
\scal{z_{i,n}(\omega)}{\mathsf{v}_i}\big).
\end{equation}
However, according to \ref{pgb18T4-08iv}, at the expense of
possibly taking $\omega$ in a smaller almost sure event, 
$\varepsilon_{i,n}(\omega)=1$ infinitely often. Hence, there 
exists a monotone sequence $(k_n)_{n\in\NN}$ in $\NN$ 
such that $k_n\to\pinf$ and, for $n\in\NN$ sufficiently large,
\begin{equation}
\scal{z_{i,n+1}(\omega)}{\mathsf{v}_i}=
\scal{\mathsf{Q}_i\boldsymbol{x}_{k_n}(\omega)}
{\mathsf{v}_i}+\scal{b_{i,k_n}(\omega)}{\mathsf{v}_i}.
\end{equation}
Thus, since $\scal{\mathsf{Q}_i\boldsymbol{x}_{k_n}(\omega)}
{\mathsf{v}_i}\to\scal{z_i(\omega)}{\mathsf{v}_i}$ and
$\scal{b_{i,k_n}(\omega)}{\mathsf{v}_i}\to 0$, 
$\scal{z_{i,n+1}(\omega)-z_i(\omega)}{\mathsf{v}_i}\to 0$.
Hence, $\scal{\boldsymbol{z}_{n+1}(\omega)-
\boldsymbol{z}(\omega)}{\boldsymbol{\mathsf{v}}}=\sum_{i=1}^m
\scal{z_{i,n+1}(\omega)-z_i(\omega)}{\mathsf{v}_i}\to 0$.
This shows that $\boldsymbol{z}_n\weakly\boldsymbol{z}\;\as$,
which allows us to conclude that $\gamma^{-1}(\boldsymbol{x}_n-
\boldsymbol{z}_n)\weakly\boldsymbol{u}\;\as$
\end{proof}

\begin{remark}
\label{rgb18T4-08}
Let us make some connections between
Proposition~\ref{pgb18T4-08} and existing results.
\begin{enumerate}
\item
In the standard case of a single block ($m=1$) and when all the
variables are deterministic, the above primal convergence 
result goes back to \cite{Ecks92} and to \cite{Lion79} in the 
unrelaxed case. 
\item
In minimization problems, the alternating direction method of 
multipliers (ADMM) is strongly related to an application of the 
Douglas-Rachford algorithm to the dual problem \cite{Gaba83}. 
This connection can be used to construct a random block-coordinate 
ADMM algorithm. Let us note that such an algorithm was recently
proposed in \cite{Iutz13} in a finite-dimensional setting, where
single-block, unrelaxed, and error-free iterations were used.
\end{enumerate}
\end{remark}

Next, we apply Proposition~\ref{pgb18T4-08} to devise a 
primal-dual block-coordinate algorithm for solving a class of 
structured inclusion problems investigated in \cite{Siop13}. 

\begin{corollary}
\label{cgb18T4-09}
Set $\mathsf{D}=\{0,1\}^{m+p}\smallsetminus
\{\boldsymbol{\mathsf{0}}\}$,
let $(\GG_k)_{1\leq k\leq p}$ be separable real Hilbert spaces, and 
set $\GGG=\GG_1\oplus\cdots\oplus\GG_p$. For every 
$i\in\{1,\ldots,m\}$, let 
$\mathsf{A}_i\colon\HH_i\to 2^{\HH_i}$ be maximally monotone
and, for every $k\in\{1,\ldots,p\}$, let 
$\mathsf{B}_k\colon\GG_k\to 2^{\GG_k}$ be maximally monotone, and 
let $\mathsf{L}_{ki}\colon\HH_i\to\GG_k$ be linear and bounded. 
It is assumed that the set $\boldsymbol{\mathsf{F}}$ of 
solutions to the problem
\begin{equation}
\label{egb18T4-09p}
\text{find}\;\;{\mathsf{x}_1\in\HH_1,\ldots,\mathsf{x}_m\in\HH_m}
\;\;\text{such that}\;\;(\forall i\in\{1,\ldots,m\})\; 0\in
\mathsf{A}_i\mathsf{x}_i+\sum_{k=1}^p\mathsf{L}_{ki}^*\mathsf{B}_k
\bigg(\sum_{j=1}^m\mathsf{L}_{kj}\mathsf{x}_j\bigg)
\end{equation}
is nonempty. We also consider the set $\boldsymbol{\mathsf{F}}^*$ of 
solutions to the dual problem
\begin{equation}
\label{egb18T4-09d}
\text{find}\;\;{\mathsf{v}_1\in\GG_1,\ldots,\mathsf{v}_p\in\GG_p}
\;\;\text{such that}\;\;(\forall k\in\{1,\ldots,p\})\; 0\in
-\Sum_{i=1}^m\mathsf{L}_{ki}\mathsf{A}_i^{-1}\bigg(-\Sum_{l=1}^p
\mathsf{L}_{li}^*\mathsf{v}_l\bigg)+\mathsf{B}_k^{-1}\mathsf{v}_k.
\end{equation}
Let $\gamma\in\RPP$, let $(\mu_n)_{n\in\NN}$ be a sequence in 
$\left]0,2\right[$ such that $\inf_{n\in\NN}\mu_n>0$ and 
$\sup_{n\in\NN}\mu_n<2$, let $\boldsymbol{x}_0$, $\boldsymbol{z}_0$,
$(\boldsymbol{a}_n)_{n\in\NN}$, and 
$(\boldsymbol{c}_n)_{n\in\NN}$ be $\HHH$-valued random variables,
let $\boldsymbol{y}_0$, $\boldsymbol{w}_0$,
$(\boldsymbol{b}_n)_{n\in\NN}$, and 
$(\boldsymbol{d}_n)_{n\in\NN}$ be $\GGG$-valued random 
variables, and let $(\boldsymbol{\varepsilon}_n)_{n\in\NN}$ be 
identically distributed $\mathsf{D}$-valued random variables. Set
\begin{equation}
\label{egb18T4-10a}
\boldsymbol{\mathsf{V}}=\Menge{(\mathsf{x}_1,\ldots,\mathsf{x}_m,
\mathsf{y}_1,\ldots,\mathsf{y}_p)\in\HHH\oplus\GGG}
{(\forall k\in\{1,\ldots,p\})\;\mathsf{y}_k=
\sum_{i=1}^m\mathsf{L}_{ki}\mathsf{x}_i},
\end{equation}
let $\boldsymbol{\mathsf{P}}_{\boldsymbol{\mathsf{\mathsf{V}}}}
\colon\boldsymbol{\mathsf{x}}\mapsto
(\mathsf{Q}_j\boldsymbol{\mathsf{x}})_{1\leq j\leq m+p}$ be its
projection operator, 
where $(\forall i\in\{1,\ldots,m\})$ $\mathsf{Q}_i \colon
\HHH\oplus \GGG \to \HH_i$
and $(\forall k\in\{1,\ldots,p\})$ $\mathsf{Q}_{m+k}\colon
\HHH\oplus\GGG\to\GG_k$, iterate
\begin{equation}
\label{e:main17}
\begin{array}{l}
\text{for}\;n=0,1,\ldots\\
\left\lfloor
\begin{array}{l}
\text{for}\;i=1,\ldots,m\\
\left\lfloor
\begin{array}{l}
z_{i,n+1}=z_{i,n}+\varepsilon_{i,n}\big(\mathsf{Q}_i(x_{1,n},%
\ldots,x_{m,n},y_{1,n},\ldots,y_{p,n})+c_{i,n}-z_{i,n}\big)\\[1mm]
x_{i,n+1}=x_{i,n}+\varepsilon_{i,n}\mu_n
\big(\mathsf{J}_{\gamma\mathsf{A}_i}
(2z_{i,n+1}-x_{i,n})+a_{i,n}-z_{i,n+1}\big)
\end{array}
\right.\\
\text{for}\;k=1,\ldots,p\\
\left\lfloor
\begin{array}{l}
w_{k,n+1}=w_{k,n}+\varepsilon_{m+k,n}\big(\mathsf{Q}_{m+k}
(x_{1,n},\ldots,x_{m,n},y_{1,n},\ldots,y_{p,n})
+d_{k,n}-w_{k,n}\big)\\[1mm]
y_{k,n+1}=y_{k,n}+\varepsilon_{m+k,n}\mu_n
\big(\mathsf{J}_{\gamma\mathsf{B}_k}
(2w_{k,n+1}-y_{k,n})+b_{k,n}-w_{k,n+1}\big),
\end{array}
\right.
\end{array}
\right.\\
\end{array}
\end{equation}
and set $(\forall n\in\NN)$ $\boldsymbol{\YY}_n=
\sigma(\boldsymbol{x}_j,\boldsymbol{y}_j)_{0\leq j\leq n}$ and
$\EEE_n=\sigma(\boldsymbol{\varepsilon}_n)$.
In addition, assume that the following hold:
\begin{enumerate}
\item
\label{cgb18T4-09ii}
$\sum_{n\in\NN}\sqrt{\EC{\|\boldsymbol{a}_n\|^2}
{\boldsymbol{\YY}_n}}<\pinf$,
$\sum_{n\in\NN}\sqrt{\EC{\|\boldsymbol{b}_n\|^2}
{\boldsymbol{\YY}_n}}<\pinf$,
$\sum_{n\in\NN}\sqrt{\EC{\|\boldsymbol{c}_n\|^2}
{\boldsymbol{\YY}_n}}<\pinf$, 
$\sum_{n\in\NN}\sqrt{\EC{\|\boldsymbol{d}_n\|^2}
{\boldsymbol{\YY}_n}}<\pinf$, 
$\boldsymbol{c}_n\weakly\boldsymbol{\mathsf{0}}\:\as$, and 
$\boldsymbol{d}_n\weakly\boldsymbol{\mathsf{0}}\:\as$
\item
\label{cgb18T4-09iiv-}
For every $n\in\NN$, $\EEE_n$ and $\boldsymbol{\YY}_n$ are 
independent.
\item
\label{cgb18T4-09iiv}
$(\forall j\in\{1,\ldots,m+p\})$ 
$\PP[\varepsilon_{j,0}=1]>0$.
\end{enumerate}
Then $(\boldsymbol{z}_n)_{n\in\NN}$ converges weakly $\as$ to an 
$\boldsymbol{\mathsf{F}}$-valued random variable, and
$(\gamma^{-1}(\boldsymbol{w}_n-\boldsymbol{y}_n))_{n\in\NN}$ 
converges weakly $\as$ to an
$\boldsymbol{\mathsf{F}}^*$-valued random variable.
\end{corollary}
\begin{proof}
Set 
$\boldsymbol{\mathsf{A}}\colon\HHH\to2^{\HHH}\colon\boldsymbol
{\mathsf{x}}\mapsto\cart_{\!i=1}^{\!m}\mathsf{A}_i\mathsf{x}_i$,
$\boldsymbol{\mathsf{B}}\colon\GGG\to2^{\GGG}\colon\boldsymbol
{\mathsf{y}}\mapsto\cart_{\!k=1}^{\!p}\mathsf{B}_k\mathsf{y}_k$,
and $\boldsymbol{\mathsf{L}}\colon\HHH\to\GGG\colon
\boldsymbol{\mathsf{x}}\mapsto\big(\sum_{i=1}^m\mathsf{L}_{ki}
\mathsf{x}_i\big)_{1\leq k\leq p}$.
Furthermore, let us introduce 
\begin{equation}
\label{egb18T4-09r}
\KKK=\HHH\oplus\GGG,\quad
\boldsymbol{\mathsf{C}}\colon\KKK\to 2^{\KKK}
\colon (\boldsymbol{\mathsf{x}},\boldsymbol{\mathsf{y}})\mapsto
\boldsymbol{\mathsf{A}}\boldsymbol{\mathsf{x}}\times
\boldsymbol{\mathsf{B}}\boldsymbol{\mathsf{y}},
\quad\text{and}\quad
\boldsymbol{\mathsf{V}}=\menge{(\boldsymbol{\mathsf{x}},%
\boldsymbol{\mathsf{y}})\in\KKK}{\boldsymbol{\mathsf{L}}
\boldsymbol{\mathsf{x}}=\boldsymbol{\mathsf{y}}}.
\end{equation}
Then the primal-dual problem
\eqref{egb18T4-09p}--\eqref{egb18T4-09d} can be rewritten as
\begin{equation}
\label{egb18T4-09q}
\text{find}\;\;(\boldsymbol{\mathsf{x}},\boldsymbol{\mathsf{v}})
\in\KKK\;\;\text{such that}\;\;
\begin{cases}
\boldsymbol{\mathsf{0}}
\in\boldsymbol{\mathsf{A}}\boldsymbol{\mathsf{x}}+
\boldsymbol{\mathsf{L}}^*\boldsymbol{\mathsf{B}}
\boldsymbol{\mathsf{L}}\boldsymbol{\mathsf{x}}\\
\boldsymbol{\mathsf{0}}\in -\boldsymbol{\mathsf{L}}
\boldsymbol{\mathsf{A}}^{-1}(-\boldsymbol{\mathsf{L}}^*
\boldsymbol{\mathsf{v}})+\boldsymbol{\mathsf{B}}^{-1}
\boldsymbol{\mathsf{v}}.
\end{cases}
\end{equation}
The normal cone operator to $\boldsymbol{\mathsf{V}}$ is
\cite[Example~6.42]{Livre1}
\begin{equation}
\label{egb18T4-09ss}
\boldsymbol{\mathsf{N}}_{\boldsymbol{\mathsf{V}}}
\colon\KKK\to 2^{\KKK}
\colon(\boldsymbol{\mathsf{x}},\boldsymbol{\mathsf{y}})\mapsto
\begin{cases}
\boldsymbol{\mathsf{V}}^\bot,&\text{if}\;\;\boldsymbol{\mathsf{L}}
\boldsymbol{\mathsf{x}}=\boldsymbol{\mathsf{y}};\\
\emp,&\text{if}\;\;\boldsymbol{\mathsf{L}}\boldsymbol{\mathsf{x}}
\neq\boldsymbol{\mathsf{y}},
\end{cases}
\quad\text{where}\quad
\boldsymbol{\mathsf{V}}^\bot=\menge{(\boldsymbol{\mathsf{u}},%
\boldsymbol{\mathsf{v}})\in\KKK}{\boldsymbol{\mathsf{u}}=
-\boldsymbol{\mathsf{L}}^*\boldsymbol{\mathsf{v}}}.
\end{equation}
Now let $(\boldsymbol{\mathsf{x}},\boldsymbol{\mathsf{y}})\in\KKK$.
Then 
\begin{eqnarray}
\label{egb18T4-09s}
(\boldsymbol{\mathsf{0}},\boldsymbol{\mathsf{0}})\in
\boldsymbol{\mathsf{C}}(\boldsymbol{\mathsf{x}},%
\boldsymbol{\mathsf{y}})+
\boldsymbol{\mathsf{N}}_{\boldsymbol{\mathsf{V}}}
(\boldsymbol{\mathsf{x}},\boldsymbol{\mathsf{y}})
&\Leftrightarrow&
\begin{cases}
(\boldsymbol{\mathsf{x}},\boldsymbol{\mathsf{y}})\in\
\boldsymbol{\mathsf{V}}\\
(\boldsymbol{\mathsf{0}},\boldsymbol{\mathsf{0}})\in
(\boldsymbol{\mathsf{A}}\boldsymbol{\mathsf{x}}\times 
\boldsymbol{\mathsf{B}}\boldsymbol{\mathsf{y}})+
\boldsymbol{\mathsf{V}}^\bot
\end{cases}
\nonumber\\
&\Leftrightarrow&
\begin{cases}
\boldsymbol{\mathsf{L}}\boldsymbol{\mathsf{x}}=
\boldsymbol{\mathsf{y}}\\
(\exi\boldsymbol{\mathsf{u}}\in
\boldsymbol{\mathsf{A}}\boldsymbol{\mathsf{x}})
(\exi\boldsymbol{\mathsf{v}}\in
\boldsymbol{\mathsf{B}}\boldsymbol{\mathsf{y}})\;\:
\boldsymbol{\mathsf{u}}=-\boldsymbol{\mathsf{L}}^*
\boldsymbol{\mathsf{v}}
\end{cases}
\nonumber\\
&\Rightarrow&
(\exi\boldsymbol{\mathsf{v}}\in\boldsymbol{\mathsf{B}}
(\boldsymbol{\mathsf{L}}\boldsymbol{\mathsf{x}}))\;
-\boldsymbol{\mathsf{L}}^*
\boldsymbol{\mathsf{v}}\in\boldsymbol{\mathsf{A}}
\boldsymbol{\mathsf{x}}
\nonumber\\
&\Rightarrow&
(\exi\boldsymbol{\mathsf{v}}\in\GGG)\;\:
\boldsymbol{\mathsf{L}}^*\boldsymbol{\mathsf{v}}\in
\boldsymbol{\mathsf{L}}^*\boldsymbol{\mathsf{B}}
\boldsymbol{\mathsf{L}}\boldsymbol{\mathsf{x}}\;\;\text{and}\;
-\boldsymbol{\mathsf{L}}^*
\boldsymbol{\mathsf{v}}\in\boldsymbol{\mathsf{A}}
\boldsymbol{\mathsf{x}}
\nonumber\\
&\Leftrightarrow&
\boldsymbol{\mathsf{x}}\;\text{solves \eqref{egb18T4-09p}}.
\end{eqnarray}
Since $\boldsymbol{\mathsf{C}}$ and 
$\boldsymbol{\mathsf{N}}_{\boldsymbol{\mathsf{V}}}$ are maximally 
monotone, it follows from \cite[Proposition~23.16]{Livre1}
that the iteration process \eqref{e:main17} 
is an instance of \eqref{e:main15} for finding a zero of 
$\boldsymbol{\mathsf{C}}+
\boldsymbol{\mathsf{N}}_{\boldsymbol{\mathsf{V}}}$ in $\KKK$.
The associated dual problem consists of finding a zero of
$-\boldsymbol{\mathsf{C}}^{-1}(-\cdot)+
\boldsymbol{\mathsf{N}}_{\boldsymbol{\mathsf{V}}}^{-1}$. 
Let $(\boldsymbol{\mathsf{u}},\boldsymbol{\mathsf{v}})\in\KKK$.
Then \eqref{egb18T4-09ss} yields
\begin{eqnarray}
\label{e:pasdormi11}
(\boldsymbol{\mathsf{0}},\boldsymbol{\mathsf{0}})\in
-\boldsymbol{\mathsf{C}}^{-1}(-\boldsymbol{\mathsf{u}},%
-\boldsymbol{\mathsf{v}})+
\boldsymbol{\mathsf{N}}_{\boldsymbol{\mathsf{V}}}^{-1}
(\boldsymbol{\mathsf{u}},\boldsymbol{\mathsf{v}})
&\Leftrightarrow&
(\boldsymbol{\mathsf{0}},\boldsymbol{\mathsf{0}})\in
-\boldsymbol{\mathsf{C}}^{-1}(-\boldsymbol{\mathsf{u}},%
-\boldsymbol{\mathsf{v}})+
\boldsymbol{\mathsf{N}}_{\boldsymbol{\mathsf{V}}^\bot}
(\boldsymbol{\mathsf{u}},\boldsymbol{\mathsf{v}})\nonumber\\
&\Leftrightarrow&
\begin{cases}
(\boldsymbol{\mathsf{u}},\boldsymbol{\mathsf{v}})\in\
\boldsymbol{\mathsf{V}}^\bot\\
(\boldsymbol{\mathsf{0}},\boldsymbol{\mathsf{0}})\in
\big(-\boldsymbol{\mathsf{A}}^{-1}(-\boldsymbol{\mathsf{u}})\times 
-\boldsymbol{\mathsf{B}}^{-1}(-\boldsymbol{\mathsf{v}})\big)+
\boldsymbol{\mathsf{V}}
\end{cases}
\nonumber\\
&\Leftrightarrow&
\begin{cases}
\boldsymbol{\mathsf{u}}=-\boldsymbol{\mathsf{L}}^*
\boldsymbol{\mathsf{v}}\\
(\exi\boldsymbol{\mathsf{x}}\in
-\boldsymbol{\mathsf{A}}^{-1}(-\boldsymbol{\mathsf{u}}))
(\exi\boldsymbol{\mathsf{y}}\in
-\boldsymbol{\mathsf{B}}^{-1}(-\boldsymbol{\mathsf{v}}))\;
\boldsymbol{\mathsf{L}}\boldsymbol{\mathsf{x}}=
\boldsymbol{\mathsf{y}}
\end{cases}
\nonumber\\
&\Rightarrow&
(\exi\boldsymbol{\mathsf{x}}\in-\boldsymbol{\mathsf{A}}^{-1}
(\boldsymbol{\mathsf{L}}^*\boldsymbol{\mathsf{v}}))\;
\boldsymbol{\mathsf{L}}
\boldsymbol{\mathsf{x}}\in-\boldsymbol{\mathsf{B}}^{-1}
(-\boldsymbol{\mathsf{v}})
\nonumber\\
&\Rightarrow&
(\exi\boldsymbol{\mathsf{x}}\in\HHH)\;\;
\boldsymbol{\mathsf{L}}\boldsymbol{\mathsf{x}}\in
-\boldsymbol{\mathsf{L}}\boldsymbol{\mathsf{A}}^{-1}
(\boldsymbol{\mathsf{L}}^*\boldsymbol{\mathsf{v}}))
\;\text{and}\;
-\boldsymbol{\mathsf{L}}
\boldsymbol{\mathsf{x}}\in\boldsymbol{\mathsf{B}}^{-1}
(-\boldsymbol{\mathsf{v}})
\nonumber\\
&\Leftrightarrow&
-\boldsymbol{\mathsf{v}}\;\text{solves \eqref{egb18T4-09d}}.
\end{eqnarray}
The convergence result therefore follows from 
Proposition~\ref{pgb18T4-08} using \eqref{egb18T4-09s}, 
\eqref{e:pasdormi11}, and the weak continuity of 
$\boldsymbol{\mathsf{P}}_{\boldsymbol{\mathsf{\mathsf{V}}}}=
\boldsymbol{\mathsf{J}}_{\gamma \boldsymbol{\mathsf{N}}_
{\boldsymbol{\mathsf{\mathsf{V}}}}}$
\cite[Proposition~28.11(i)]{Livre1}.
\end{proof}

\begin{remark}
The parametrization \eqref{egb18T4-09r} made it possible to
reduce the structured primal-dual problem 
\eqref{egb18T4-09p}--\eqref{egb18T4-09d} to a basic
two-operator inclusion, to which the block-coordinate
Douglas-Rachford algorithm \eqref{e:main15} could be applied. 
A similar parametrization was used in \cite{Optl14} in a different
context. We also note that, at each iteration of 
Algorithm~\eqref{e:main17}, components of the projector
$\boldsymbol{\mathsf{P}}_{\boldsymbol{\mathsf{\mathsf{V}}}}$ need
to be activated. This operator is expressed as 
\begin{equation}
\big(\forall (\boldsymbol{\mathsf{x}},\boldsymbol{\mathsf{y}})\in
\HHH\oplus\GGG\big)
\quad 
\boldsymbol{\mathsf{P}}_{\boldsymbol{\mathsf{\mathsf{V}}}}
\colon\boldsymbol(\boldsymbol{\mathsf{x}},\boldsymbol{\mathsf{y}})
\mapsto(\boldsymbol{\mathsf{t}},\boldsymbol{\mathsf{L}}
\boldsymbol{\mathsf{t}})
=(\boldsymbol{\mathsf{x}}-\boldsymbol{\mathsf{L}}^*\boldsymbol
{\mathsf{s}},\boldsymbol{\mathsf{y}}+\boldsymbol{\mathsf{s}})
\end{equation}
where $\boldsymbol{\mathsf{t}}=(\boldsymbol{\mathsf{Id}}+
\boldsymbol{\mathsf{L}}^*
\boldsymbol{\mathsf{L}})^{-1}(\boldsymbol{\mathsf{x}}+
\boldsymbol{\mathsf{L}}^*\boldsymbol{\mathsf{y}})$ and 
$\boldsymbol{\mathsf{s}}=
(\boldsymbol{\mathsf{Id}}+\boldsymbol{\mathsf{L}}
\boldsymbol{\mathsf{L}}^*)^{-1}
(\boldsymbol{\mathsf{L}}\boldsymbol{\mathsf{x}}-
\boldsymbol{\mathsf{y}})$ \cite[Lemma~3.1]{Optl14}.
This formula allows us to compute the components of 
$\boldsymbol{\mathsf{P}}_{\boldsymbol{\mathsf{\mathsf{V}}}}$,
which is especially simple when 
$\boldsymbol{\mathsf{Id}}+\boldsymbol{\mathsf{L}}^*
\boldsymbol{\mathsf{L}}$ or $\boldsymbol{\mathsf{Id}}+
\boldsymbol{\mathsf{L}}\boldsymbol{\mathsf{L}}^*$ is easily 
inverted.
\end{remark}

The previous result leads to a random block-coordinate primal-dual 
proximal algorithm for solving a wide range of structured convex 
optimization problems.

\begin{corollary}
Set $\mathsf{D}=\{0,1\}^{m+p}\smallsetminus
\{\boldsymbol{\mathsf{0}}\}$,
let $(\GG_k)_{1\leq k\leq p}$ be separable real Hilbert spaces, 
and set $\GGG=\GG_1\oplus\cdots\oplus\GG_p$. For every 
$i\in\{1,\ldots,m\}$, let $\mathsf{f}_i\in \Gamma_0(\HH_i)$ and, for 
every $k\in\{1,\ldots,p\}$, let $\mathsf{g}_k\in\Gamma_0(\GG_k)$, 
and let $\mathsf{L}_{ki}\colon\HH_i\to\GG_k$ be linear and bounded. 
It is assumed that there exists 
$(\mathsf{x}_1,\ldots,\mathsf{x}_m)\in\HHH$ such that
\begin{equation}
\label{egb18T4-29a}
(\forall i\in\{1,\ldots,m\})\quad 0\in
\partial\mathsf{f}_i(\mathsf{x}_i)+\sum_{k=1}^p\mathsf{L}_{ki}^*
\partial\mathsf{g}_k\bigg(\sum_{j=1}^m\mathsf{L}_{kj}
\mathsf{x}_j\bigg).
\end{equation}
Let $\boldsymbol{\mathsf{F}}$ be the set of solutions to the
problem
\begin{equation}
\minimize{\mathsf{x}_1\in\HH_1,\ldots,\mathsf{x}_m\in\HH_m}
{\sum_{i=1}^m\mathsf{f}_i(\mathsf{x}_i)+\sum_{k=1}^p
\mathsf{g}_k\bigg(\sum_{i=1}^m\mathsf{L}_{ki}\mathsf{x}_i\bigg)}
\end{equation}
and let $\boldsymbol{\mathsf{F}}^*$ be the set of 
solutions to the dual problem
\begin{equation}
\minimize{\mathsf{v}_1\in\GG_1,\ldots,\mathsf{v}_p\in\GG_p}
{\sum_{i=1}^m\mathsf{f}_i^*\bigg(-\Sum_{k=1}^p
\mathsf{L}_{ki}^*\mathsf{v}_k\bigg)+\sum_{k=1}^p
\mathsf{g}_k^*(\mathsf{v}_k)}.
\end{equation}
Let $\gamma\in\RPP$, let $(\mu_n)_{n\in\NN}$ 
be a sequence in $\left]0,2\right[$ such that 
$\inf_{n\in\NN}\mu_n>0$ and $\sup_{n\in\NN}\mu_n<2$,
let $\boldsymbol{x}_0$, $\boldsymbol{z}_0$,
$(\boldsymbol{a}_n)_{n\in\NN}$, and 
$(\boldsymbol{c}_n)_{n\in\NN}$ be $\HHH$-valued random variables,
let $\boldsymbol{y}_0$, $\boldsymbol{w}_0$,
$(\boldsymbol{b}_n)_{n\in\NN}$, and 
$(\boldsymbol{d}_n)_{n\in\NN}$ be $\GGG$-valued random 
variables, and let $(\boldsymbol{\varepsilon}_n)_{n\in\NN}$ be 
identically distributed $\mathsf{D}$-valued random variables. 
Define $\boldsymbol{\mathsf{V}}$ as in \eqref{egb18T4-10a} and 
set $\boldsymbol{\mathsf{P}}_{\boldsymbol{\mathsf{\mathsf{V}}}}
\colon\boldsymbol{\mathsf{x}}\mapsto
(\mathsf{Q}_j\boldsymbol{\mathsf{x}})_{1\leq j\leq m+p}$
where $(\forall i\in\{1,\ldots,m\})$ $\mathsf{Q}_i\colon
\HHH\oplus\GGG\to\HH_i$ and $(\forall k\in\{1,\ldots,p\})$ 
$\mathsf{Q}_{m+k}\colon\HHH\oplus\GGG\to\GG_k$, and iterate
\begin{equation}
\begin{array}{l}
\text{for}\;n=0,1,\ldots\\
\left\lfloor
\begin{array}{l}
\text{for}\;i=1,\ldots,m\\
\left\lfloor
\begin{array}{l}
z_{i,n+1}=z_{i,n}+\varepsilon_{i,n}\big(\mathsf{Q}_i(x_{1,n},%
\ldots,x_{m,n},y_{1,n},\ldots,y_{p,n})+c_{i,n}-z_{i,n}\big)\\[1mm]
x_{i,n+1}=x_{i,n}+\varepsilon_{i,n}\mu_n
\big(\prox_{\gamma\mathsf{f}_i}
(2z_{i,n+1}-x_{i,n})+a_{i,n}-z_{i,n+1}\big)
\end{array}
\right.\\
\text{for}\;k=1,\ldots,p\\
\left\lfloor
\begin{array}{l}
w_{k,n+1}=w_{k,n}+\varepsilon_{m+k,n}\big(\mathsf{Q}_{m+k}
(x_{1,n},\ldots,x_{m,n},y_{1,n},\ldots,y_{p,n})
+d_{k,n}-w_{k,n}\big)\\[1mm]
y_{k,n+1}=y_{k,n}+\varepsilon_{m+k,n}\mu_n
\big(\prox_{\gamma\mathsf{g}_k}
(2w_{k,n+1}-y_{k,n})+b_{k,n}-w_{k,n+1}\big).
\end{array}
\right.
\end{array}
\right.\\
\end{array}
\end{equation}
In addition, assume that conditions 
\ref{cgb18T4-09ii}--\ref{cgb18T4-09iiv} of
Corollary~\ref{cgb18T4-09} are satisfied.
Then $(\boldsymbol{z}_n)_{n\in\NN}$ converges weakly $\as$ to an 
$\boldsymbol{\mathsf{F}}$-valued random variable, and
$(\gamma^{-1}(\boldsymbol{w}_n-\boldsymbol{y}_n))_{n\in\NN}$ 
converges weakly $\as$ to an
$\boldsymbol{\mathsf{F}}^*$-valued random variable.
\end{corollary}
\begin{proof}
Using the same arguments as in \cite[Proposition~5.4]{Siop13} one
sees that this is an application of Corollary~\ref{cgb18T4-09} 
with, for every $i\in\{1,\ldots,m\}$, 
$\mathsf{A}_i=\partial\mathsf{f}_i$ and, for every 
$k\in\{1,\ldots,p\}$, $\mathsf{B}_k=\partial\mathsf{g}_k$.
\end{proof}

\begin{remark}
Sufficient conditions for \eqref{egb18T4-29a} to hold are provided
in \cite[Proposition~5.3]{Siop13}.
\end{remark}

\subsection{Random block-coordinate forward-backward splitting}
\label{sec:52}

The forward-backward algorithm addresses the problem of finding a 
zero of the sum of two maximally monotone operators, one of which 
has a strongly monotone inverse (see \cite{Sico10,Opti04} for
historical background). It has been applied to a wide variety of
problems among which mechanics, partial differential equations, 
best approximation, evolution inclusions, signal and image 
processing, convex optimization, learning theory, inverse 
problems, statistics, and game theory 
\cite{Sico10,Livre1,Bric13,Byrn14,Opti04,Banf11,Smms05,Devi11,%
Glow89,Merc79,Tsen90,Tsen91,Vill13}. 
In this section we design a block-coordinate version of this
algorithm with random sweeping and stochastic errors.

\begin{definition}{\rm\cite[Definition~2.3]{Sico10}}
\label{d:demir}
An operator $\mathsf{A}\colon\HH\to 2^{\HH}$ is \emph{demiregular} 
at $\mathsf{x}\in\dom\mathsf{A}$ if, for every sequence
$((\mathsf{x}_n,\mathsf{u}_n))_{n\in\NN}$ in 
$\gra\mathsf{A}$ and every $\mathsf{u}\in\mathsf{A}\mathsf{x}$ 
such that $\mathsf{x}_n\weakly\mathsf{x}$ and 
$\mathsf{u}_n\to\mathsf{u}$, we have $\mathsf{x}_n\to\mathsf{x}$.
\end{definition}

\begin{lemma}{\rm\cite[Proposition~2.4]{Sico10}}
\label{l:2009-09-20}
Let $\mathsf{A}\colon\HH\to 2^{\HH}$ be monotone and suppose that 
$\mathsf{x}\in\dom\mathsf{A}$. Then $\mathsf{A}$ is demiregular at
$\mathsf{x}$ in each of the 
following cases:
\begin{enumerate}
\item
\label{l:2009-09-20i}
$\mathsf{A}$ is uniformly monotone at $\mathsf{x}$, i.e., 
there exists an increasing function $\theta\colon\RP\to\RPX$ that 
vanishes only at $0$ such that
$(\forall\mathsf{u}\in\mathsf{A}\mathsf{x})
(\forall (\mathsf{y},\mathsf{v})\in\gra\mathsf{A})$
$\scal{\mathsf{x}-\mathsf{y}}{\mathsf{u}-\mathsf{v}}
\geq\theta(\|\mathsf{x}-\mathsf{y}\|)$.
\item
\label{l:2009-09-20ii}
$\mathsf{A}$ is strongly monotone, i.e., there exists 
$\alpha\in\RPP$ such that $\mathsf{A}-\alpha\Id$ is monotone.
\item
\label{l:2009-09-20iv-}
$\mathsf{J}_{\mathsf{A}}$ is compact, i.e., for every bounded set 
$\mathsf{C}\subset\HH$, the closure of 
$\mathsf{J}_{\mathsf{A}}(\mathsf{C})$ is compact. In particular, 
$\dom\mathsf{A}$ is boundedly relatively compact, i.e., the 
intersection of its closure with every closed ball is compact.
\item
\label{l:2009-09-20vi}
$\mathsf{A}\colon\HH\to\HH$ is single-valued with a single-valued
continuous inverse.
\item
\label{l:2009-09-20vii}
$\mathsf{A}$ is single-valued on $\dom\mathsf{A}$ and
$\Id-\mathsf{A}$ is demicompact.
\item
\label{p:2009-09-20ii+}
$\mathsf{A}=\partial\mathsf{f}$, where $\mathsf{f}\in\Gamma_0(\HH)$
is uniformly convex at $\mathsf{x}$, i.e., there exists an 
increasing function $\theta\colon\RP\to\RPX$ that vanishes only 
at $0$ such that 
\begin{equation}
(\forall\alpha\in\zeroun)(\forall\mathsf{y}\in\dom\mathsf{f})\quad
\mathsf{f}\big(\alpha \mathsf{x}+(1-\alpha)\mathsf{y}\big)
+\alpha(1-\alpha)\theta(\|\mathsf{x}-\mathsf{y}\|)
\leq\alpha\mathsf{f}(\mathsf{x})+(1-\alpha)\mathsf{f}(\mathsf{y}).
\end{equation}
\item
\label{p:2009-09-20ii++++}
$\mathsf{A}=\partial\mathsf{f}$, where $\mathsf{f}\in\Gamma_0(\HH)$ 
and, for every $\xi\in\RR$, 
$\menge{\mathsf{x}\in\HH}{\mathsf{f}(\mathsf{x})\leq\xi}$ is 
boundedly compact.
\end{enumerate}
\end{lemma}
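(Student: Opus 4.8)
The plan is to dispatch the seven cases by reducing five of them to two workhorses: the uniform monotonicity estimate of~\ref{l:2009-09-20i} and an elementary ``subsequence'' compactness principle. Throughout, fix a sequence $((\mathsf{x}_n,\mathsf{u}_n))_{n\in\NN}$ in $\gra\mathsf{A}$ and a point $\mathsf{u}\in\mathsf{A}\mathsf{x}$ with $\mathsf{x}_n\weakly\mathsf{x}$ and $\mathsf{u}_n\to\mathsf{u}$; in every case the goal is $\mathsf{x}_n\to\mathsf{x}$. Observe at the outset that $(\mathsf{x}_n)_{n\in\NN}$ is bounded, being weakly convergent, and that $(\mathsf{u}_n)_{n\in\NN}$ is bounded, being strongly convergent.

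For~\ref{l:2009-09-20i} I would apply the uniform monotonicity inequality with $(\mathsf{y},\mathsf{v})=(\mathsf{x}_n,\mathsf{u}_n)$ to obtain $\theta(\|\mathsf{x}-\mathsf{x}_n\|)\leq\scal{\mathsf{x}-\mathsf{x}_n}{\mathsf{u}-\mathsf{u}_n}\leq\|\mathsf{x}-\mathsf{x}_n\|\,\|\mathsf{u}-\mathsf{u}_n\|$. The right-hand side tends to $0$ since the first factor is bounded and the second vanishes, whence $\theta(\|\mathsf{x}-\mathsf{x}_n\|)\to0$; as $\theta$ is increasing and vanishes only at $0$, this forces $\|\mathsf{x}-\mathsf{x}_n\|\to0$. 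Case~\ref{l:2009-09-20ii} is then the instance $\theta\colon t\mapsto\alpha t^2$. Case~\ref{p:2009-09-20ii+} also reduces to~\ref{l:2009-09-20i}: evaluating the uniform convexity inequality at $\mathsf{w}_\alpha=\alpha\mathsf{x}+(1-\alpha)\mathsf{y}$ and combining it with the subgradient inequalities for $\mathsf{u}\in\partial\mathsf{f}(\mathsf{x})$ and $\mathsf{v}\in\partial\mathsf{f}(\mathsf{y})$ (multiplied by $\alpha$ and $1-\alpha$ respectively, then added), after dividing by $\alpha(1-\alpha)$ gives $\scal{\mathsf{x}-\mathsf{y}}{\mathsf{u}-\mathsf{v}}\geq\theta(\|\mathsf{x}-\mathsf{y}\|)$, i.e. $\partial\mathsf{f}$ is uniformly monotone at $\mathsf{x}$.

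The remaining cases rest on the observation that if $(\mathsf{x}_n)_{n\in\NN}$ lies in a relatively compact set, then it converges strongly to $\mathsf{x}$: every subsequence then has a strongly convergent sub-subsequence whose limit, being also a weak limit, equals $\mathsf{x}$, so the whole sequence converges to $\mathsf{x}$. In~\ref{l:2009-09-20iv-}, since $\mathsf{u}_n\in\mathsf{A}\mathsf{x}_n$ we have $\mathsf{x}_n=\mathsf{J}_{\mathsf{A}}(\mathsf{x}_n+\mathsf{u}_n)$ with $(\mathsf{x}_n+\mathsf{u}_n)_{n\in\NN}$ bounded, so compactness of $\mathsf{J}_{\mathsf{A}}$ places $(\mathsf{x}_n)_{n\in\NN}$ in a relatively compact set. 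In~\ref{l:2009-09-20vii}, writing $\mathsf{T}=\Id-\mathsf{A}$ gives $\mathsf{T}\mathsf{x}_n-\mathsf{x}_n=-\mathsf{u}_n\to-\mathsf{u}$, so demicompactness supplies $\SC(\mathsf{x}_n)_{n\in\NN}\neq\emp$, and the same applied to any subsequence yields the conclusion. In~\ref{p:2009-09-20ii++++}, testing the subgradient inequality at $\mathsf{x}_n$ against $\mathsf{x}$ gives $\mathsf{f}(\mathsf{x}_n)\leq\mathsf{f}(\mathsf{x})+\scal{\mathsf{x}_n-\mathsf{x}}{\mathsf{u}_n}$, whose right-hand side is bounded; hence $(\mathsf{x}_n)_{n\in\NN}$ lies in some sublevel set of $\mathsf{f}$, and bounded compactness of that set together with boundedness of the sequence again gives relative compactness. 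Finally~\ref{l:2009-09-20vi} is immediate, since $\mathsf{x}_n=\mathsf{A}^{-1}\mathsf{u}_n\to\mathsf{A}^{-1}\mathsf{u}=\mathsf{x}$ by continuity of $\mathsf{A}^{-1}$.

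I expect the main obstacle to be case~\ref{p:2009-09-20ii+}: deriving the uniform monotonicity of $\partial\mathsf{f}$ from the uniform convexity of $\mathsf{f}$ requires the correct three-point combination at $\mathsf{w}_\alpha$ and care that the modulus $\theta$ survives unchanged after clearing the factor $\alpha(1-\alpha)$. The compactness cases are routine once the subsequence principle is isolated, the only recurring subtlety being to check that each extracted strong limit must coincide with the weak limit $\mathsf{x}$.
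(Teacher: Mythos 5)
The paper offers no proof of this lemma---it is quoted verbatim from \cite[Proposition~2.4]{Sico10}---so the only meaningful comparison is with the argument in that reference, and your proof is correct and essentially identical to it: the direct estimate $\theta(\|\mathsf{x}-\mathsf{x}_n\|)\leq\|\mathsf{x}-\mathsf{x}_n\|\,\|\mathsf{u}-\mathsf{u}_n\|\to 0$ for \ref{l:2009-09-20i}, the reductions of \ref{l:2009-09-20ii} and \ref{p:2009-09-20ii+} to it, and the subsequence/compactness principle for the remaining cases all check out. The two delicate points you flag are handled correctly: the three-point convexity computation does return the unchanged modulus $\theta$ after dividing by $\alpha(1-\alpha)$, and in each compactness case the strong sub-subsequential limits are correctly identified with the weak limit $\mathsf{x}$.
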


Our block-coordinate forward-backward algorithm is the following.
\begin{proposition}
\label{pjk8yT4-03}
Set $\mathsf{D}=\{0,1\}^m\smallsetminus
\{\boldsymbol{\mathsf{0}}\}$ and,
for every $i\in\{1,\ldots,m\}$, let 
$\mathsf{A}_i\colon\HH_i\to 2^{\HH_i}$ be maximally monotone 
and let $\mathsf{B}_i\colon\HHH\to\HH_i$. Suppose that
\begin{equation}
\label{e:genna07-21}
(\exi\vartheta\in\RPP)(\forall\boldsymbol{\mathsf{x}}\in\HHH)
(\forall\boldsymbol{\mathsf{y}}\in\HHH)\quad
\sum_{i=1}^m\scal{\mathsf{x}_i-\mathsf{y}_i}{\mathsf{B}_i
\boldsymbol{\mathsf{x}}-\mathsf{B}_i\boldsymbol{\mathsf{y}}}
\geq\vartheta\sum_{i=1}^m\big\|\mathsf{B}_i\boldsymbol{\mathsf{x}}
-\mathsf{B}_i\boldsymbol{\mathsf{y}}\big\|^2,
\end{equation}
and that the set $\boldsymbol{\mathsf{F}}$ of solutions to the 
problem
\begin{equation}
\label{ejk8yT4-04x}
\text{find}\;\;{\mathsf{x}_1\in\HH_1,\ldots,\mathsf{x}_m\in\HH_m}
\;\;\text{such that}\;\;(\forall i\in\{1,\ldots,m\})\quad 0\in
\mathsf{A}_i\mathsf{x}_i+\mathsf{B}_i(\mathsf{x}_1,\ldots,
\mathsf{x}_m)
\end{equation}
is nonempty. Let $(\gamma_n)_{n\in\NN}$ be a sequence in 
$\left]0,2\vartheta\right[$ such that
$\inf_{n\in\NN}\gamma_n>0$ and $\sup_{n\in\NN}\gamma_n<2\vartheta$,
and let $(\lambda_n)_{n\in\NN}$ be a sequence in $\left]0,1\right]$
such that $\inf_{n\in\NN}\lambda_n>0$. 
Let $\boldsymbol{x}_0$, $(\boldsymbol{a}_n)_{n\in\NN}$, 
and $(\boldsymbol{c}_n)_{n\in\NN}$ be $\HHH$-valued random 
variables, and let $(\boldsymbol{\varepsilon}_n)_{n\in\NN}$ be 
identically distributed $\mathsf{D}$-valued random variables. 
Iterate
\begin{equation}
\label{e:main12}
\begin{array}{l}
\text{for}\;n=0,1,\ldots\\
\left\lfloor
\begin{array}{l}
\text{for}\;i=1,\ldots,m\\
\left\lfloor
\begin{array}{l}
x_{i,n+1}=x_{i,n}+\varepsilon_{i,n}\lambda_n
\big(\mathsf{J}_{\gamma_n\mathsf{A}_i}\big(x_{i,n}-\gamma_n
(\mathsf{B}_i(x_{1,n},\ldots,x_{m,n})+c_{i,n})\big)+
a_{i,n}-x_{i,n}\big),
\end{array}
\right.
\end{array}
\right.\\
\end{array}
\end{equation}
and set $(\forall n\in\NN)$
$\EEE_n=\sigma(\boldsymbol{\varepsilon}_n)$. Furthermore, 
assume that the following hold:
\begin{enumerate}
\item
\label{pjk8yT4-03ii}
$\sum_{n\in\NN}\sqrt{\EC{\|\boldsymbol{a}_n\|^2}{
\boldsymbol{\XX}_n}}<\pinf$ and
$\sum_{n\in\NN}\sqrt{\EC{\|\boldsymbol{c}_n\|^2}{
\boldsymbol{\XX}_n}}<\pinf$.
\item
\label{pjk8yT4-03iv-}
For every $n\in\NN$, $\EEE_n$ and $\XXX_n$ are independent.
\item
\label{pjk8yT4-03iv}
$(\forall i\in\{1,\ldots,m\})$ $\PP[\varepsilon_{i,0}=1]>0$.
\end{enumerate}
Then $(\boldsymbol{x}_n)_{n\in\NN}$ converges weakly $\as$ to an 
$\boldsymbol{\mathsf F}$-valued random variable $\boldsymbol{x}$. 
If, in addition, one of the following holds:
\begin{enumerate}
\setcounter{enumi}{3}
\item
\label{pjk8yT4-03v}
for every $\boldsymbol{\mathsf{x}}\in\boldsymbol{\mathsf{F}}$ and
every $i\in\{1,\ldots,m\}$, $\mathsf{A}_i$ is demiregular at 
$\mathsf{x}_i$;
\item
\label{pjk8yT4-03vi}
the operator $\boldsymbol{\mathsf{x}}
\mapsto(\mathsf{B}_i\boldsymbol{\mathsf{x}})_{1\leq i\leq m}$ is
demiregular at every point in $\boldsymbol{\mathsf{F}}$;
\end{enumerate}
then $(\boldsymbol{x}_n)_{n\in\NN}$ converges strongly $\as$ to 
$\boldsymbol{x}$.
\end{proposition}
\begin{proof}
We are going to apply Theorem~\ref{t:1}.
Set $\boldsymbol{\mathsf A}\colon\HHH\to 2^\HHH\colon
\boldsymbol{\mathsf x}\mapsto\cart_{\!i=1}^{\!m}\mathsf{A}_i
\mathsf{x}_i$, 
$\boldsymbol{\mathsf B}\colon\HHH\to\HHH\colon\boldsymbol{\mathsf x}
\mapsto(\mathsf{B}_i\boldsymbol{\mathsf x})_{1 \leq i\leq m}$,
and, for every $n\in\NN$, $\alpha_n=1/2$,
$\beta_n=\gamma_n/(2\vartheta)$,
$\boldsymbol{\mathsf T}_{\!n}=\boldsymbol{\mathsf J}_{\gamma_n
\boldsymbol{\mathsf A}}$, 
$\boldsymbol{\mathsf R}_n=\ID-\gamma_n\boldsymbol{\mathsf{B}}$,
and $\boldsymbol{b}_n=-\gamma_n\boldsymbol{c}_n$. Then, 
$\boldsymbol{\mathsf F}=\zer(\boldsymbol{\mathsf A}+
\boldsymbol{\mathsf B})$ and, for every 
$n\in\NN$, $\boldsymbol{\mathsf T}_{\!n}$ is $\alpha_n$-averaged 
\cite[Corollary~23.8]{Livre1},
$\boldsymbol{\mathsf T}_{\!n}\colon\boldsymbol{\mathsf{x}}\mapsto
(\mathsf{J}_{\gamma_n\mathsf{A}_i}\mathsf{x}_i)_{1\leq i\leq m}$ 
\cite[Proposition~23.16]{Livre1},
$\boldsymbol{\mathsf R}_n$ is $\beta_n$-averaged
\cite[Proposition~4.33]{Livre1}, and
$\Fix(\boldsymbol{\mathsf T}_{\!n}\circ\boldsymbol{\mathsf R}_n)=
\boldsymbol{\mathsf F}$ \cite[Proposition~25.1(iv)]{Livre1}. 
Moreover, $\sum_{n\in\NN}\sqrt{\EC{\|\boldsymbol{b}_n\|^2}{
\boldsymbol{\XX}_n}}\leq2\vartheta\sum_{n\in\NN}
\sqrt{\EC{\|\boldsymbol{c}_n\|^2}{\boldsymbol{\XX}_n}}<\pinf$ and 
\eqref{e:main12} is a special case of \eqref{e:main1}. 
Observe that \eqref{egb18T4-23a} and \eqref{egb18T4-23b}
imply the existence of $\widetilde{\Omega}\in\FF$ such that 
$\PP(\widetilde{\Omega})=1$ and
\begin{equation}
\label{eQ6t5Ds2-31b}
(\forall\omega\in\widetilde{\Omega})
(\forall\boldsymbol{\mathsf{z}}\in\boldsymbol{\mathsf{F}})\quad
\begin{cases}
\boldsymbol{\mathsf T}_{\!n}(\boldsymbol{\mathsf{R}}_n
\boldsymbol{x}_n(\omega))
-\boldsymbol{\mathsf{R}}_n\boldsymbol{x}_n(\omega)
+\boldsymbol{\mathsf R}_n\boldsymbol{\mathsf{z}}
\to\boldsymbol{\mathsf{z}}\\
\boldsymbol{\mathsf{R}}_n\boldsymbol{x}_n(\omega)-
\boldsymbol{x}_n(\omega)-\boldsymbol{\mathsf R}_n
\boldsymbol{\mathsf{z}}\to-\boldsymbol{\mathsf{z}}.
\end{cases}
\end{equation}
Consequently, since $\inf_{n\in\NN}\gamma_n>0$, 
\begin{equation}
\label{eQ6t5Ds2-31c}
(\forall\omega\in\widetilde{\Omega})
(\forall\boldsymbol{\mathsf{z}}\in\boldsymbol{\mathsf{F}})\quad
\begin{cases}
\boldsymbol{\mathsf J}_{\gamma_n\boldsymbol{\mathsf A}}
\big(\boldsymbol{x}_n(\omega)
-\gamma_n\boldsymbol{\mathsf{B}}\boldsymbol{x}_n(\omega)\big)-
\boldsymbol{x}_n(\omega)=
\boldsymbol{\mathsf T}_{\!n}\big(\boldsymbol{\mathsf{R}}_n
\boldsymbol{x}_n(\omega)\big)-\boldsymbol{x}_n(\omega)
\to\boldsymbol{0}\\
\boldsymbol{\mathsf{B}}\boldsymbol{x}_n(\omega)\to
\boldsymbol{\mathsf{B}}\boldsymbol{\mathsf{z}}.
\end{cases}
\end{equation}
Now set
\begin{equation}
\label{e:burn}
(\forall n\in\NN)\quad\boldsymbol{y}_n
=\boldsymbol{\mathsf{J}}_{\gamma_n\boldsymbol{\mathsf{A}}}
(\boldsymbol{x}_n-\gamma_n
\boldsymbol{\mathsf{B}}\boldsymbol{x}_n)
\quad\text{and}\quad
\boldsymbol{{u}}_n=\gamma_n^{-1}(\boldsymbol{{x}}_n
-\boldsymbol{{y}}_n)-
\boldsymbol{\mathsf{B}}\boldsymbol{{x}}_n.
\end{equation}
Then \eqref{eQ6t5Ds2-31c} yields
\begin{equation}
\label{eQ6t5Ds2-31d}
(\forall\omega\in\widetilde{\Omega})
(\forall\boldsymbol{\mathsf{z}}\in\boldsymbol{\mathsf{F}})\quad
\boldsymbol{x}_n(\omega)-\boldsymbol{y}_n(\omega)\to
\boldsymbol{\mathsf{0}}
\quad\text{and}\quad \boldsymbol{u}_n(\omega)\to
-\boldsymbol{\mathsf{B}}\boldsymbol{\mathsf{z}}.
\end{equation}
Now, let us establish condition \ref{t:1iii} of Theorem~\ref{t:1}.
To this end, it is enough to fix 
$\boldsymbol{\mathsf{z}}\in\boldsymbol{\mathsf{F}}$,
$\boldsymbol{\mathsf{x}}\in\HHH$, a strictly 
increasing sequence $(k_n)_{n\in\NN}$ in $\NN$, and 
$\omega\in\widetilde{\Omega}$ such that
$\boldsymbol{x}_{k_n}(\omega)\weakly\boldsymbol{\mathsf{x}}$
and to show that 
$\boldsymbol{\mathsf{x}}\in\boldsymbol{\mathsf{F}}$. 
It follows from \eqref{eQ6t5Ds2-31c} that
$\boldsymbol{\mathsf{B}}\boldsymbol{x}_{k_n}(\omega)\to
\boldsymbol{\mathsf{B}}\boldsymbol{\mathsf{z}}$. Hence, 
since \cite[Example~20.28]{Livre1} asserts that 
$\boldsymbol{\mathsf{B}}$ is maximally monotone, we deduce from
\cite[Proposition~20.33(ii)]{Livre1} that 
$\boldsymbol{\mathsf{B}}\boldsymbol{\mathsf{x}}=
\boldsymbol{\mathsf{B}}\boldsymbol{\mathsf{z}}$. We also derive
from \eqref{eQ6t5Ds2-31d} that 
$\boldsymbol{y}_{k_n}(\omega)\weakly\boldsymbol{\mathsf{x}}$ and
$\boldsymbol{u}_{k_n}(\omega)\to-\boldsymbol{\mathsf{B}}
\boldsymbol{\mathsf{z}}=-\boldsymbol{\mathsf{B}}
\boldsymbol{\mathsf{x}}$. Since \eqref{e:burn} implies that 
$(\boldsymbol{y}_{k_n}(\omega),%
\boldsymbol{u}_{k_n}(\omega))_{n\in\NN}$ 
lies in the graph of $\boldsymbol{\mathsf{A}}$, it follows from 
\cite[Proposition~20.33(ii)]{Livre1} that $-\boldsymbol{\mathsf{B}}
\boldsymbol{\mathsf{x}}\in\boldsymbol{\mathsf{A}}\boldsymbol
{\mathsf{x}}$, i.e., 
$\boldsymbol{\mathsf{x}}\in\boldsymbol{\mathsf{F}}$.
This proves that $(\boldsymbol{x}_n)_{n\in\NN}$ converges weakly 
$\as$ to an $\boldsymbol{\mathsf F}$-valued random variable 
$\boldsymbol{x}$, say 
\begin{equation}
\label{eQ6t5Ds2-31a}
\boldsymbol{x}_n(\omega)\weakly\boldsymbol{x}(\omega)
\end{equation}
for every $\omega$ in some $\widehat{\Omega}\in\FF$ such that
$\widehat{\Omega}\subset\widetilde{\Omega}$ and 
$\PP(\widehat{\Omega})=1$.

Finally take $\omega\in\widehat\Omega$.
First, suppose that \ref{pjk8yT4-03v} holds. Then
$\boldsymbol{\mathsf{A}}$ is demiregular at 
$\boldsymbol{x}(\omega)$. In view of \eqref{eQ6t5Ds2-31d}
and \eqref{eQ6t5Ds2-31a},
$\boldsymbol{y}_n(\omega)\weakly\boldsymbol{x}(\omega)$. 
Furthermore,
$\boldsymbol{u}_n(\omega)\to-\boldsymbol{\mathsf{B}}
\boldsymbol{x}(\omega)$ and 
$(\boldsymbol{y}_n(\omega),%
\boldsymbol{u}_{n}(\omega))_{n\in\NN}$ 
lies in the graph of $\boldsymbol{\mathsf{A}}$. Altogether
$\boldsymbol{y}_n(\omega)\to\boldsymbol{x}(\omega)$ and,
therefore $\boldsymbol{x}_n(\omega)\to\boldsymbol{x}(\omega)$.
Now, suppose that \ref{pjk8yT4-03vi} holds. Then,
since \eqref{eQ6t5Ds2-31c} yields 
$\boldsymbol{\mathsf{B}}\boldsymbol{x}_n(\omega)\to
\boldsymbol{\mathsf{B}}\boldsymbol{x}(\omega)$, 
\eqref{eQ6t5Ds2-31a} implies that 
$\boldsymbol{x}_n(\omega)\to\boldsymbol{x}(\omega)$.
\end{proof}

\begin{remark}
\label{rjk8yT4-04}
Here are a few remarks regarding Proposition~\ref{pjk8yT4-03}.
\begin{enumerate}
\item
Proposition~\ref{pjk8yT4-03} generalizes 
\cite[Corollary~6.5 and Remark~6.6]{Opti04}, which does not 
allow for block-processing and uses deterministic variables. 
\item
Problem~\eqref{ejk8yT4-04x} was considered in \cite{Sico10}, 
where it was shown to capture formulations encountered in areas 
such as evolution equations, game theory, optimization, best 
approximation, and network flows. It also models domain 
decomposition problems in partial differential equations 
\cite{Luis11}.
\item
Proposition~\ref{pjk8yT4-03} generalizes 
\cite[Theorem~2.9]{Sico10}, which uses a fully parallel deterministic
algorithm in which all the blocks are used at each iteration, i.e., 
$(\forall n\in\NN)(\forall i\in\{1,\ldots,m\})$ 
$\varepsilon_{i,n}=1$.
\item
As shown in \cite{Svva10,Opti12}, strongly monotone composite
inclusion problems can be solved by applying the forward-backward 
algorithm to the dual problem. Using 
Proposition~\ref{pjk8yT4-03} we can obtain a block-coordinate 
version of this primal-dual framework. Likewise, it was shown in 
\cite{Opti12,Cond13,Bang13} that suitably renormed versions
of the forward-backward algorithm applied in the primal-dual space
yielded a variety of methods for solving composite inclusions in
duality. Block-coordinate versions of these methods can be devised
via Proposition~\ref{pjk8yT4-03}.
\end{enumerate}
\end{remark}

Next, we present an application of Proposition~\ref{pjk8yT4-03}
to block-coordinate convex minimization.

\begin{corollary}
\label{cjk8yT4-04}
Set $\mathsf{D}=\{0,1\}^m\smallsetminus\{\boldsymbol{\mathsf{0}}\}$ 
and let $(\GG_k)_{1\leq k\leq p}$ be separable real Hilbert spaces. 
For every $i\in\{1,\ldots,m\}$, let 
$\mathsf{f}_i\in\Gamma_0(\HH_i)$ and, for every 
$k\in\{1,\ldots,p\}$, let $\tau_k\in\RPP$, let 
$\mathsf{g}_k\colon\GG_k\to\RR$ be a differentiable convex function 
with a $\tau_k$-Lipschitz-continuous gradient, and let 
$\mathsf{L}_{ki}\colon\HH_i\to\GG_k$ be linear and bounded. It is 
assumed that 
$\min_{1\leq k\leq p}\sum_{i=1}^m\|\mathsf{L}_{ki}\|^2>0$ and that
the set $\boldsymbol{\mathsf{F}}$ of solutions to the problem
\begin{equation}
\label{ejk8yT4-04m}
\minimize{\mathsf{x}_1\in\HH_1,\ldots,\mathsf{x}_m\in\HH_m}
{\sum_{i=1}^m\mathsf{f}_i(\mathsf{x}_i)+\sum_{k=1}^p
\mathsf{g}_k\bigg(\sum_{i=1}^m\mathsf{L}_{ki}\mathsf{x}_i\bigg)}
\end{equation}
is nonempty. Let
\begin{equation}
\label{egb18T4-06}
\vartheta\in\bigg]0,\bigg(\displaystyle{\sum_{k=1}^p}
\:\tau_k\bigg\|\sum_{i=1}^m\mathsf{L}_{ki}\mathsf{L}_{ki}^*
\bigg\|\bigg)^{-1}\:\bigg],
\end{equation}
let $(\gamma_n)_{n\in\NN}$ be a sequence in 
$\left]0,2\vartheta\right[$ such that
$\inf_{n\in\NN}\gamma_n>0$ and $\sup_{n\in\NN}\gamma_n<2\vartheta$,
and let $(\lambda_n)_{n\in\NN}$ be a sequence in $\left]0,1\right]$
such that $\inf_{n\in\NN}\lambda_n>0$. 
Let $\boldsymbol{x}_0$, $(\boldsymbol{a}_n)_{n\in\NN}$, 
and $(\boldsymbol{c}_n)_{n\in\NN}$ be $\HHH$-valued random 
variables, and let $(\boldsymbol{\varepsilon}_n)_{n\in\NN}$ be 
identically distributed $\mathsf{D}$-valued random variables. 
Iterate
\begin{equation}
\label{e:main14}
\begin{array}{l}
\text{for}\;n=0,1,\ldots\\
\left\lfloor
\begin{array}{l}
\text{for}\;i=1,\ldots,m\\
\left\lfloor
\begin{array}{l}
r_{i,n}=\varepsilon_{i,n}\big(x_{i,n}-\gamma_n\big(\sum_{k=1}^p
\mathsf{L}_{ki}^*\nabla\mathsf{g}_k\big(\sum_{j=1}^m\mathsf{L}_{kj}
x_{j,n}\big)+c_{i,n}\big)\big)\\[2mm]
x_{i,n+1}=x_{i,n}+\varepsilon_{i,n}\lambda_n\big(\prox_{\gamma_n
\mathsf{f}_i}r_{i,n}+a_{i,n}-x_{i,n}\big).
\end{array}
\right.
\end{array}
\right.\\
\end{array}
\end{equation}
In addition, assume that conditions 
\ref{pjk8yT4-03ii}--\ref{pjk8yT4-03iv} in 
Proposition~\ref{pjk8yT4-03} are satisfied.
Then $(\boldsymbol{x}_n)_{n\in\NN}$ converges weakly $\as$ to an 
$\boldsymbol{\mathsf{F}}$-valued random variable.
If, furthermore, one of the following holds (see 
Lemma~\ref{l:2009-09-20}\ref{p:2009-09-20ii+}%
--\ref{p:2009-09-20ii++++} for examples):
\begin{enumerate}
\item
\label{pjk8yT4-03V}
for every $\boldsymbol{\mathsf{x}}\in\boldsymbol{\mathsf{F}}$ and
every $i\in\{1,\ldots,m\}$, $\partial\mathsf{f}_i$ is demiregular at 
$\mathsf{x}_i$;
\item
\label{pjk8yT4-03VI}
the operator $\boldsymbol{\mathsf{x}}\mapsto(\sum_{k=1}^p
\mathsf{L}_{ki}^*\nabla\mathsf{g}_k(
\sum_{j=1}^m\mathsf{L}_{kj}\mathsf{x}_j))_{1\leq i\leq m}$ 
is demiregular at every point in $\boldsymbol{\mathsf{F}}$;
\end{enumerate}
then $(\boldsymbol{x}_n)_{n\in\NN}$ converges strongly $\as$ to 
$\boldsymbol{x}$.
\end{corollary}
\begin{proof}
As shown in \cite[Section~4]{Sico10}, \eqref{ejk8yT4-04m} is 
a special case of \eqref{ejk8yT4-04x} with
\begin{equation}
\label{e:genna07-5}
\mathsf{A}_i=\partial\mathsf{f}_i\quad\text{and}\quad
\mathsf{B}_i\colon(\mathsf{x}_j)_{1\leq j\leq m}\mapsto\sum_{k=1}^p
\mathsf{L}_{ki}^*\nabla\mathsf{g}_k\bigg(\sum_{j=1}^m\mathsf{L}_{kj}
\mathsf{x}_j\bigg).
\end{equation}
Now set $\boldsymbol{\mathsf h}\colon\HHH\to\RR\colon
\boldsymbol{\mathsf x}\mapsto\sum_{k=1}^p\mathsf{g}_k
\big(\sum_{i=1}^m\mathsf{L}_{ki}\mathsf{x}_i\big)$. Then 
$\boldsymbol{\mathsf h}$ is a Fr\'echet-differentiable convex
function and $\boldsymbol{\mathsf B}=\nabla\boldsymbol{\mathsf h}$ 
is Lipschitz-continuous with constant $1/\vartheta$, where 
$\vartheta$ is given in \eqref{egb18T4-06}. It therefore follows 
from the Baillon-Haddad theorem \cite[Theorem~18.15]{Livre1} that
\eqref{e:genna07-21} holds with this constant. Since, in view of 
\eqref{e:prox2}, \eqref{e:main12} specializes to \eqref{e:main14}, 
the convergence claims follow from 
Proposition~\ref{pjk8yT4-03}.
\end{proof}

\begin{remark}
\label{rjk8yT4-05}
Here are a few observations about Corollary~\ref{cjk8yT4-04}.
\begin{enumerate}
\item
If more assumptions are available about the problem, the Lipschitz
constant $\vartheta$ of \eqref{egb18T4-06} can be improved. Some 
examples are given in \cite{Nmtm09}.
\item
Recently, some block-coordinate forward-backward methods have been 
proposed for not necessarily convex minimization problems in
Euclidean spaces. Thus, when applied to convex functions satisfying 
the Kurdyka-{\L}ojasiewicz inequality, the deterministic 
block-coordinate forward-backward algorithm proposed in 
\cite[Section~3.6]{Bolt14} corresponds to the special case of 
\eqref{ejk8yT4-04m} in which 
\begin{equation}
\label{egb18T4-07b}
\HHH\;\text{is a Euclidean space},\quad
p=1,\quad\text{and}\quad
(\forall\boldsymbol{\mathsf{x}}\in\HHH)\quad
\sum_{i=1}^m\mathsf{L}_{1i}\mathsf{x}_i=\boldsymbol{\mathsf{x}}.
\end{equation}
In that method, the sweeping proceeds by activating 
only one block at each iteration according to a periodic schedule. 
Moreover, errors and relaxations are not allowed. This approach was 
extended in \cite{Chou14} to an error-tolerant form with a 
cyclic sweeping rule whereby each block is used at least once
within a preset number of consecutive iterations.
\item
A block-coordinate forward-backward method with random seeping was
proposed in \cite{Rich14} in the special case of 
\eqref{egb18T4-07b}. That method uses only one block at each 
iteration, no relaxation, and no error terms. The asymptotic 
analysis of \cite{Rich14} provides a lower bound on the 
probability that $(f+g_1)(x_n)$ be close to $\inf(f+g_1)(\HH)$, 
with no result on the convergence of the sequence 
$(x_n)_{n\in\NN}$. Related work is presented in 
\cite{Luxi15,Neco14}.
\end{enumerate}
\end{remark}

\end{document}